\pgfplotsset{compat=newest}
\newcommand{\Krasnoselsky}{Krasnosel'ski\u{\i}}
\newcommand{\PrimS}{\ensuremath{\mathcal{H}}}
\newcommand{\nat}{\ensuremath{\mathbb{N}}}
\newcommand{\weakto}{\ensuremath{\rightharpoonup}}
\newcommand{\Id}{\ensuremath{\mathop{\mathrm{Id}}}}
\newcommand{\prt}[1]{\ensuremath{\mathord{\left( #1 \right)}}}
\newcommand{\norm}[1]{\ensuremath{\mathord{\left\Vert #1 \right\Vert}}}
\newcommand{\setcond}[2]{\ensuremath{\mathord{\left\lbrace #1 : #2 \right\rbrace}}}
\newcommand{\inpr}[2]{\ensuremath{\mathord{\left\langle #1, #2 \right\rangle}}}
\newcommand{\btheta}{\Bar{\theta}}
\newcommand{\htheta}{\Hat{\theta}}
\newcommand{\ttheta}{\Tilde{\theta}}
\newcommand{\ptheta}{\theta^{\prime}}
\newcommand{\balpha}{\Bar{\alpha}}
\newcommand{\hbeta}{\Bar{\beta}}
\newcommand{\hvarphi}{\widehat{\varphi}}
\newcommand{\zer}[1]{\operatorname{\mathrm{zer}}(#1)}
\newcommand{\reals}{\mathbb{R}}
\newcommand{\posop}[1]{\mathcal{M}\prt{\PrimS}}
\newcommand{\lp}{\left(}
\newcommand{\rp}{\right)}
\newcommand{\xs}{x^\star}
\newcommand{\gra}[1]{\operatorname{gra}(#1)}
\newcommand{\seqq}[1]{\ensuremath{\prt{#1}_{n\in\mathbb{N}}}}
\newcommand{\defeq}{\ensuremath{\mathrel{\mathop:}=}}
\newcommand{\el}[1]{\ell_{#1}}
\newcommand{\elll}[1]{\ell_{#1}}
\newcommand{\wphi}{\widehat{\phi}}
\definecolor{matplotlibcolor1of11}{rgb}{0.5, 0.0, 1.0}
\definecolor{matplotlibcolor2of11}{rgb}{0.3039, 0.3032, 0.9882}
\definecolor{matplotlibcolor3of11}{rgb}{0.1, 0.5878, 0.9511}
\definecolor{matplotlibcolor4of11}{rgb}{0.0961, 0.8054, 0.8924}
\definecolor{matplotlibcolor5of11}{rgb}{0.3, 0.9511, 0.8090}
\definecolor{matplotlibcolor6of11}{rgb}{0.5039, 1.0, 0.7049}
\definecolor{matplotlibcolor7of11}{rgb}{0.7, 0.9511, 0.5878}
\definecolor{matplotlibcolor8of11}{rgb}{0.9039, 0.8054, 0.4512}
\definecolor{matplotlibcolor9of11}{rgb}{1.0, 0.5878, 0.3090}
\definecolor{matplotlibcolor10of11}{rgb}{1.0, 0.3032, 0.1534}
\definecolor{matplotlibcolor11of11}{rgb}{1.0, 0.0, 0.0}
\definecolor{matplotlibcolor1of21}{rgb}{0.5000, 0.0000, 1.0000}
\definecolor{matplotlibcolor2of21}{rgb}{0.4059, 0.1473, 0.9973}
\definecolor{matplotlibcolor3of21}{rgb}{0.3039, 0.3032, 0.9882}
\definecolor{matplotlibcolor4of21}{rgb}{0.2020, 0.4512, 0.9727}
\definecolor{matplotlibcolor5of21}{rgb}{0.1000, 0.5878, 0.9511}
\definecolor{matplotlibcolor6of21}{rgb}{0.0020, 0.7093, 0.9233}
\definecolor{matplotlibcolor7of21}{rgb}{0.0961, 0.8054, 0.8924}
\definecolor{matplotlibcolor8of21}{rgb}{0.1980, 0.8896, 0.8534}
\definecolor{matplotlibcolor9of21}{rgb}{0.3000, 0.9511, 0.8090}
\definecolor{matplotlibcolor10of21}{rgb}{0.4020, 0.9882, 0.7594}
\definecolor{matplotlibcolor11of21}{rgb}{0.5039, 1.0000, 0.7049}
\definecolor{matplotlibcolor12of21}{rgb}{0.5980, 0.9882, 0.6506}
\definecolor{matplotlibcolor13of21}{rgb}{0.7000, 0.9511, 0.5878}
\definecolor{matplotlibcolor14of21}{rgb}{0.8020, 0.8896, 0.5212}
\definecolor{matplotlibcolor15of21}{rgb}{0.9039, 0.8054, 0.4512}
\definecolor{matplotlibcolor16of21}{rgb}{1.0000, 0.7005, 0.3784}
\definecolor{matplotlibcolor17of21}{rgb}{1.0000, 0.5878, 0.3090}
\definecolor{matplotlibcolor18of21}{rgb}{1.0000, 0.4512, 0.2319}
\definecolor{matplotlibcolor19of21}{rgb}{1.0000, 0.3032, 0.1534}
\definecolor{matplotlibcolor20of21}{rgb}{1.0000, 0.1473, 0.0739}
\definecolor{matplotlibcolor21of21}{rgb}{1.0000, 0.0000, 0.0000}
\definecolor{matplotlibcolor1of6}{rgb}{0.5, 0.0, 1.0}
\definecolor{matplotlibcolor2of6}{rgb}{0.1, 0.5878, 0.9511}
\definecolor{matplotlibcolor3of6}{rgb}{0.3, 0.9511, 0.8090}
\definecolor{matplotlibcolor4of6}{rgb}{0.7, 0.9511, 0.5878}
\definecolor{matplotlibcolor5of6}{rgb}{1.0, 0.5878, 0.3090}
\definecolor{matplotlibcolor6of6}{rgb}{1.0, 0.0, 0.0}
\definecolor{matplotlibcolor1of19}{rgb}{0.5000, 0.0000, 1.0000}
\definecolor{matplotlibcolor2of19}{rgb}{0.3902, 0.1716, 0.9963}
\definecolor{matplotlibcolor3of19}{rgb}{0.2804, 0.3382, 0.9852}
\definecolor{matplotlibcolor4of19}{rgb}{0.1706, 0.4947, 0.9667}
\definecolor{matplotlibcolor5of19}{rgb}{0.0608, 0.6365, 0.9411}
\definecolor{matplotlibcolor6of19}{rgb}{0.0569, 0.7674, 0.9059}
\definecolor{matplotlibcolor7of19}{rgb}{0.1667, 0.8660, 0.8660}
\definecolor{matplotlibcolor8of19}{rgb}{0.2765, 0.9390, 0.8197}
\definecolor{matplotlibcolor9of19}{rgb}{0.3863, 0.9841, 0.7674}
\definecolor{matplotlibcolor10of19}{rgb}{0.5039, 1.0000, 0.7049}
\definecolor{matplotlibcolor11of19}{rgb}{0.6137, 0.9841, 0.6412}
\definecolor{matplotlibcolor12of19}{rgb}{0.7235, 0.9390, 0.5727}
\definecolor{matplotlibcolor13of19}{rgb}{0.8333, 0.8660, 0.5000}
\definecolor{matplotlibcolor14of19}{rgb}{0.9431, 0.7674, 0.4235}
\definecolor{matplotlibcolor15of19}{rgb}{1.0000, 0.6365, 0.3382}
\definecolor{matplotlibcolor16of19}{rgb}{1.0000, 0.4947, 0.2558}
\definecolor{matplotlibcolor17of19}{rgb}{1.0000, 0.3382, 0.1716}
\definecolor{matplotlibcolor18of19}{rgb}{1.0000, 0.1716, 0.0861}
\definecolor{matplotlibcolor19of19}{rgb}{1.0000, 0.0000, 0.0000}
\newtheorem{remark}{Remark}
\theoremstyle{plain}
\newtheorem{example}{Example}
\newtheorem{assumption}{Assumption}
\newtheorem{lemma}{Lemma}
\newtheorem{theorem}{Theorem}
\newtheorem{proposition}{Proposition}
\newtheorem{corollary}{Corollary}
\crefname{assumption}{Assumption}{Assumptions}
\theoremstyle{definition}
\newtheorem*{keywords}{Key words}
\newtheorem*{AMS}{AMS Subject Classification (2020)}
\title{Incorporating History and Deviations in Forward--Backward Splitting}
\author{Hamed Sadeghi\thanks{Email: \{\href{mailto:hamed.sadeghi@control.lth.se}{hamed.sadeghi}, \href{mailto:sebastian.banert@control.lth.se}{sebastian.banert}, \href{mailto:pontus.giselsson@control.lth.se}{pontus.giselsson}\}@control.lth.se. Affiliation: Lund University, Lund, Sweden.}
   \and Sebastian Banert\footnotemark[1]
   \and Pontus Giselsson\footnotemark[1]}
\begin{document}
\maketitle

\begin{abstract}

We propose a variation of the forward--backward splitting method for solving structured monotone inclusions. Our method integrates past iterates and two deviation vectors into the update equations. These deviation vectors bring flexibility to the algorithm and can be chosen arbitrarily as long as they together satisfy a norm condition. 
We present special cases where the deviation vectors, selected as predetermined linear combinations of previous iterates, always meet the norm condition. 
Notably, we introduce an algorithm employing a scalar parameter to interpolate between the conventional forward--backward splitting scheme and an accelerated $\mathcal{O}\lp\tfrac{1}{n^2}\rp$-convergent forward--backward method that encompasses both the accelerated proximal point method and the Halpern iteration as special cases. The existing methods correspond to the two extremes of the allowed scalar parameter range.
By choosing the interpolation scalar near the midpoint of the permissible range, our algorithm significantly outperforms these previously known methods when addressing a basic monotone inclusion problem stemming from minimax optimization.

\end{abstract}

\begin{keywords}
forward--backward splitting, monotone inclusions, inertial algorithms, convergence rate, Halpern iteration, deviations
\end{keywords}

\begin{AMS}
47H05, 47N10, 65K05.
\end{AMS}

\section{Introduction}\label{sec:introduction}
We consider the problem of finding $x\in\PrimS$ such that
\begin{align}\label{eq:monotone-inclusion_intro}
    0 \in Ax+Cx,
\end{align}
where $A\colon \PrimS\to 2^\PrimS$ is a maximally monotone operator, $C\colon \PrimS\to\PrimS$ is a cocoercive operator, $\PrimS$ is a real Hilbert space and $2^\PrimS$ denotes its power set. This monotone inclusion has optimization problems \cite{Eckstein1989SplittingMF,Raguet_2015}, convex-concave saddle-point problems \cite{chambolle2011first}, and variational inequalities  \cite{Attouch_coupling,Chen_1997,Tseng_2000} as special cases.

The forward--backward (FB) splitting method \cite{Bruck1975AnIS,Lions_1979,Passty_1979} has been widely used to solve the monotone inclusion problem \eqref{eq:monotone-inclusion_intro}. 
The gradient method, the proximal point algorithm \cite{rockafellar1976monotone}, the proximal-gradient method \cite{Combettes2011ProximalSM}, the Chambolle--Pock method \cite{chambolle2011first}, the Douglas--Rachford method~\cite{Lions_1979,Eckstein1989SplittingMF}, and the \Krasnoselsky--Mann iteration~\cite[Section~5.2]{bauschke2017convex} can all be considered special instances of the FB method. 
Various attempts have been made to improve the convergence of the FB splitting algorithm by incorporating information from previous iterations. Notable examples include the heavy-ball method \cite{polyak1964some}, the inertial proximal point algorithm  \cite{Alvarez_2000,Alvarez_2001}, and inertial FB algorithms  \cite{apidopoulos2020convergence,attouch2020convergence,attouch2016rate,attouch2018fast,beck2009fast,chambolle2015convergence,Cholamjiak_2018,lorenz2015inertial,Morin2021Nonlinear}, which integrate prior information into the current iteration through a momentum term.

In this paper, we propose an extension to the conventional FB algorithm 
that includes momentum-like terms and two \emph{deviation vectors}. 
These deviations have the same dimension as the underlying space of the problem and serve as adjustable parameters that provide the algorithm with great flexibility. This flexibility can be exploited to control the trajectory of the iterations with the aim to enhance algorithm convergence. To guarantee convergence, we require the deviations to satisfy a safeguarding condition that restricts the norm, but not the direction, of the deviation vectors.
Our safeguarding approach is similar to those in \cite{Banert2021AcceleratedFO, dwifob, nofob-inc}---which indeed are special instances of our algorithm---while it distinctly contrasts with the safeguarding conditions presented in \cite{giselsson2016line, sad2021Hyb, themelis2019supermann, zhang2020globally} that choose between a globally convergent and locally fast methods depending on the fulfillment of their respective safeguarding conditions.


We also introduce two special cases where the deviation vectors are predetermined linear combinations of prior iteration data. This construction ensures that the safeguarding condition is met in all iterations, implying that it does not require online evaluation. The two special cases incorporate different scalar parameters controlling the behaviour of their respective algorithms. In one case, the scalar parameter $\kappa\in(-1,1)$ regulates the momentum used in the algorithm, with $\kappa=0$ yielding the standard FB method. This algorithm converges weakly towards a solution of the inclusion problem for all $\kappa$ within the permitted range. In the other case, the scalar parameter $e\in[0,1]$ acts as an interpolator between the standard FB method ($e=0$) and an accelerated FB method ($e=1$) featuring the accelerated proximal point method from \cite{kim2021accelerated} and the Halpern iteration analyzed in \cite{lieder2021convergence} as special cases. The scalar parameter $e$ regulates the convergence rate of the squared norm of the fixed-point residual, converging as $\mathcal{O}\lp\min\lp 1/n^{2e},1/n\rp\rp$, with $e=1$ offering the accelerated FB method with an $\mathcal{O}\lp1/n^2\rp$ convergence rate, consistent with the rates in \cite{kim2021accelerated,lieder2021convergence}.


We perform numerical evaluation of these two special cases on a simple skew-symmetric monotone inclusion problem arising from optimality conditions for the minimax problem $\max_{y\in\reals}\min_{x\in\reals}xy$. Our findings suggest that with $\kappa\in[0.8,0.9]$, our first special case performs an order of magnitude better than the FB method ($\kappa=0$) on this problem. Furthermore, by allowing $e\in[0.4,0.5]$, we observe that our second special case outperforms the FB method ($e=0$) by an order of magnitude and performs several orders of magnitude better than the accelerated FB method ($e=1$), despite the latter's stronger theoretical convergence guarantee.

The analysis of our base algorithm relies on a Lyapunov inequality. We derive this inequality by applying the monotonicity inequality of operator $A$ and the cocoercivity inequality of operator $C$ (that are referred to as {\emph{interpolation conditions}} in the terminology of performance estimation (PEP), see for instance \cite{ryu1812operator,taylor2017performance,taylor2017exact}), both between the last iterate and a solution of the problem, as well as the last two points generated by the algorithm. This is in contrast to the analysis conducted in \cite{nofob-inc}, which restricts the use of these inequalities to only the last iteration and a solution. The inclusion of additional inequalities allows for deriving special cases such as the one that interpolates between FB splitting and accelerated FB splitting along with the associated convergence rate of $\mathcal{O}(1/n^{2e})$ where $e\in[0,1]$. This result is not achievable via the algorithm proposed in \cite{nofob-inc}.


The paper is organized as follows. In \cref{sec:preliminaries}, we establish the basic definitions and notations used throughout the paper. \Cref{sec:algorithm} contains the formal presentation of the problem and introduces our proposed algorithm that is analyzed in \cref{sec:convergence-analysis}. In \cref{sec:special_cases}, we present several special instances of our algorithm, two of which we examine numerically in \cref{sec:numerical-experiments}. Proofs omitted for the sake of brevity are shared in \cref{sec:omitted-proofs} and \cref{sec:conclusions} concludes the paper.

\section{Preliminaries}\label{sec:preliminaries}
The set of real numbers is denoted by $\reals$. $\PrimS$  denotes a real Hilbert space that is equipped with an inner product and an induced norm, respectively denoted by $\inpr{\cdot}{\cdot}$ and $\norm{\cdot}:=\sqrt{\inpr{\cdot}{\cdot}}$. $\posop{\PrimS}$ denotes the set of bounded linear, self-adjoint, strongly positive operators on $\PrimS$. For $M\in\posop{\PrimS}$ and all $x,y\in\PrimS$, the $M$-induced inner product and norm are denoted and defined by $\inpr{x}{y}_M:=\inpr{x}{My}$ and $\norm{x}_M = \sqrt{\inpr{x}{Mx}}$, respectively.

The \emph{power set} of $\PrimS$ is denoted by $2^\PrimS$. A map $A\colon\PrimS\to 2^{\PrimS}$ is characterized by its graph $\gra{A} = \setcond{\prt{x, u}\in\PrimS\times\PrimS}{u\in Ax}$. An operator $A\colon \PrimS\to 2^{\PrimS}$ is \emph{monotone} if $\inpr{u-v}{x-y}\geq0$ for all $\prt{x, u}, \prt{y, v}\in\gra{A}$. A monotone operator $A\colon \PrimS \to 2^{\PrimS}$ is \emph{maximally monotone} if there exists no monotone operator $B\colon\PrimS\to 2^\PrimS$ such that $\gra{B}$ properly contains $\gra{A}$.

Let $M\in \posop{\PrimS}$. An operator $T\colon\PrimS\to\PrimS$ is said to be 
\renewcommand{\labelenumi}{{(\roman{enumi})}}
\begin{enumerate}
    \item \emph{$L$-Lipschitz continuous} ($L \geq 0$) w.r.t.\ $\norm{\cdot}_M$ if
        \begin{equation*}
            \norm{Tx-Ty}_{M^{-1}}\leq L\norm{x-y}_M \qquad \text{for all } x,y\in\PrimS;
        \end{equation*}
    \item \emph{$\frac{1}{\beta}$-cocoercive} ($\beta \geq 0$) w.r.t. $\norm{\cdot}_M$ if
        \begin{equation*}
            \beta\inpr{Tx -Ty}{x-y}\geq\norm{Tx-Ty}_{M^{-1}}^2\qquad \text{for all } x,y\in\PrimS;
        \end{equation*}
    \item \emph{nonexpansive} if it is $1$-Lipschitz continuous w.r.t. $\norm{\cdot}$.
\end{enumerate}
Note that a $\frac{1}{\beta}$-cocoercive operator is $\beta$-Lipschitz continuous. This holds trivially for $\beta=0$ and for $\beta>0$ it follows from the Cauchy-Schwarz inequality.


\section{Problem statement and proposed algorithm}
\label{sec:algorithm}

We consider structured monotone inclusion problems of the form
\begin{align} \label{eq:monotone_inclusion}
    0 \in Ax + Cx,
\end{align}
that satisfy the following assumption.

\begin{assumption}
\label{assum:monotone_inclusion}
Let $\beta \geq 0$ and $M\in\posop{\PrimS}$ and assume that
\renewcommand{\labelenumi}{\emph{(\roman{enumi})}}
\begin{enumerate}
    \item $A\colon \PrimS \to 2^{\PrimS}$ is maximally monotone,
    \item $C\colon \PrimS \to \PrimS$ is $\frac{1}{\beta}$-cocoercive with respect to $\norm{\cdot}_M$, 
    \item the solution set $\zer{A+C} \defeq \setcond{x\in\PrimS}{0\in Ax+Cx}$ is nonempty.
\end{enumerate}
\end{assumption}
Since operator $C$ has a full domain and is maximally monotone as a cocoercive operator \cite[Corollary 20.28]{bauschke2017convex}, the operator $A+C$ is also maximally monotone \cite[Corollary 25.5]{bauschke2017convex}.

We propose the following variant of FB splitting which incorporates momentum terms and deviations in order to solve the inclusion problem in \eqref{eq:monotone_inclusion}. The algorithm has many degrees of freedom that we will specify later in this section and in the special cases found in \cref{sec:special_cases}.

\begin{algorithm}[H]
	\caption{}
	\begin{algorithmic}[1]
	    \State \textbf{Input:} initial point $x_0 \in \PrimS$;  the strictly positive sequences \seqq{\gamma_n} and \seqq{\lambda_n};  the non-negative sequences \seqq{\zeta_n} and  \seqq{\mu_n}; $\hbeta\geq\beta\geq 0$; and the metric $\norm{\cdot}_M$ with $M\in\posop{\PrimS}$. 
	    \State \label{alg:main:auxiliary-parameters} Given the input parameters, for all $n\in\nat$, define:
	    \renewcommand\theenumi\labelenumi
        \renewcommand{\labelenumi}{{(\roman{enumi})}}
        \begin{enumerate}[noitemsep, ref=\Cref{def:parameters}~\theenumi]
            \item $\alpha_n\defeq\tfrac{\mu_n}{\lambda_n+\mu_n}$;\label{itm:def-param-alpha}
            \item $\balpha_n\defeq\tfrac{\gamma_n\mu_n}{\gamma_{n-1}(\lambda_n+\mu_n)}$;\label{itm:def-param-alpha-bar}
            \item $\theta_n\defeq(4-\gamma_n\hbeta)(\lambda_n+\mu_n)-2\lambda_n^2$;\label{itm:def-param-theta}
            \item $\htheta_n\defeq2\lambda_n+2\mu_n-\gamma_n\hbeta\lambda_n^2$;\label{itm:def-param-theta-hat}
            \item $\btheta_n\defeq\lambda_n+\mu_n-\lambda_n^2$;\label{itm:def-param-theta-bar}
            \item $\ttheta_n\defeq(\lambda_n+\mu_n)\gamma_n\hbeta$.\label{itm:def-param-theta-tilde}
        \end{enumerate}

	    \State \textbf{set:} $y_{-1}=p_{-1}=z_{-1}=x_0$ and $u_0=v_0=0$ and $\gamma_{-1}=\gamma_0$
		\For {$n=0,1,2,\ldots$}
		    \State $y_n=x_n+\alpha_n(y_{n-1}-x_n)+u_n$\label{alg:main:y}
		    \State $z_n=x_n+\alpha_n(p_{n-1}-x_n)+\balpha_n(z_{n-1}-p_{n-1})+\tfrac{\btheta_n\gamma_n\hbeta}{\htheta_n}u_n+v_n$\label{alg:main:z}
		    \State $p_n = \prt{M+\gamma_nA}^{-1}\prt{M z_n - \gamma_n C y_n}$ \label{alg:main:FB-step}
		    \State $x_{n+1} = x_n + \lambda_n(p_n - z_n) + \balpha_n\lambda_n(z_{n-1}-p_{n-1})$ \label{alg:main:x}
		    \State choose $u_{n+1}$ and $v_{n+1}$ such that \label{alg:main:deviations}
		    \begin{equation}\label{eq:bound_on_deviations}         
      \left(\lambda_{n+1}+\mu_{n+1}\right)\lp\frac{\ttheta_{n+1}}{\htheta_{n+1}}\norm{u_{n+1}}_{M}^2 + \frac{\htheta_{n+1}}{\theta_{n+1}}\norm{v_{n+1}}_{M}^2\rp \leq \zeta_{n+1}\elll{n}
            \end{equation}
            ~~~~~is satisfied, where
            \begin{equation}\label{eq:ell_n}
            \begin{aligned}
                \elll{n} &= \tfrac{\theta_n}{2}\norm{p_n-x_n+\alpha_n(x_n-p_{n-1}) +\tfrac{\gamma_n\hbeta\lambda_n^2}{\htheta_n}u_n - \tfrac{2\btheta_n}{\theta_n}v_n}_M^2\\               
                &\qquad+2\mu_{n}\gamma_{n}\inpr{\tfrac{z_{n}-p_{n}}{\gamma_{n}}-\tfrac{z_{n-1}-p_{n-1}}{\gamma_{n-1}}}{p_{n}-p_{n-1}}_M\\
                &\qquad+\tfrac{\mu_{n}\gamma_{n}\hbeta}{2}\norm{p_{n}-y_{n}-(p_{n-1}-y_{n-1})}_M^2
            \end{aligned}
            \end{equation}
		\EndFor
	\end{algorithmic}
\label{alg:main}
\end{algorithm}

At the core of the method is a forward--backward type step, found in Step~\ref{alg:main:FB-step} of \cref{alg:main}, which reduces to a nominal forward--backward step when $z_n=y_n$. The update equations for the algorithm sequences $y_n$, $z_n$, and $x_n$ involve linear combinations of momentum-like terms and the so-called {\emph{deviations}}, $u_n$ and $v_n$. These deviations are arbitrarily chosen provided they satisfy safeguarding condition in \eqref{eq:bound_on_deviations}, where $\elll{n}$ is defined in \eqref{eq:ell_n}. When selecting the deviations, all other quantities involved in \eqref{eq:bound_on_deviations} are computable. These deviations offer a degree of flexibility that can be used to control the algorithm trajectory with the aim of improving convergence. In \cref{sec:special_cases}, we present examples of nontrivial deviations that a priori satisfy this condition, thus removing the need for online evaluation.

For the algorithm to be implementable, let alone convergent, the algorithm parameters must be constrained. For the FB step in Step~\ref{alg:main:FB-step} to be implementable, and the safeguarding step to be satisfied for some $u_{n+1}$ and $v_{n+1}$, we require for all $n\in\nat$ that $\gamma_n$, $\lambda_n$, $\theta_n$, $\htheta_n$, and $\ttheta_n$ are strictly positive and the parameters $\zeta_n$, $\mu_n$, $\alpha_n$, and $\balpha_n$ are non-negative. Fulfillment of these requirements allows for a trivial choice that satisfies the safeguarding condition \eqref{eq:bound_on_deviations}, namely $u_{n+1}=v_{n+1}=0$, which results in a novel momentum-type forward--backward scheme. Additional requirements on some of these parameters, that are needed for the convergence analysis, are discussed in \cref{sec:convergence-analysis}.

\cref{alg:main} can be viewed as an extension of the algorithm in \cite{nofob-inc}. The key difference arises from the inclusion of additional monotonicity and cocoercivity inequalities (interpolation conditions) in our analysis compared to the analysis of \cite{nofob-inc}. In contrast to the analysis in \cite{nofob-inc}, we utilize inequalities not only between the last iteration points and a solution but also between points generated during the last two iterations of our algorithm. 
This approach provides our algorithm with an additional degree of freedom, embodied by the parameter $\mu_n$, that stems from the degree to which these extra interpolation conditions are incorporated into the analysis.
This addition yields momentum-like terms in the updates, a less restrictive safeguarding condition, and the potential to derive convergence rate estimates for several involved quantities up to $O(\tfrac{1}{n^2})$. Such rates are not achievable in \cite{nofob-inc} as setting $\mu_n$ to zero reverts our algorithm to that of \cite{nofob-inc}.

When $\gamma_n=\gamma>0$ for every $n\in\nat$, the deviation vectors $u_n$ and $v_n$ can be chosen so that $y_{n}=z_n$. In this case, Step~\ref{alg:main:FB-step} of \cref{alg:main} simplifies to a FB step of the form
\begin{align*}
    p_n = \lp(M+\gamma A)^{-1}\circ(M-\gamma C)\rp y_n.
\end{align*}
It is widely recognized that given appropriate selections of $\gamma>0$, $M\in\posop{\PrimS}$, $A$, and $C$, this FB step can reduce to iterations of well-known algorithms. These include the Chambolle--Pock algorithm~\cite{chambolle2011first}, the Condat--V{\~u} method~\cite{condat2013primal,vu2013splitting}, the Douglas--Rachford method~\cite{Lions_1979}, the \Krasnoselsky--Mann iteration~\cite[Section~5.2]{bauschke2017convex}, and the proximal gradient method. Consequently, \cref{alg:main} can be applied to all these special cases.

\subsection{Preview of special cases}
\label{sec:special_case_preview}



This section previews some special cases of \cref{alg:main}, which we will explore in depth in \cref{sec:special_cases,sec:numerical-experiments}.
Specifically, we consider cases where the sequence \seqq{\lambda_n} is non-decreasing and, for all $n\in\nat$, $\gamma_n=\gamma>0$,
\begin{equation*}
    v_n = \tfrac{\prt{2-\gamma\hbeta}(\lambda_n+\mu_n)}{\htheta_n}u_n,
\end{equation*}
and $u_n$ is parallel to the expression in the first norm in the $\elll{n}$ expression in \eqref{eq:ell_n} that contributes to the upper bound in the safeguarding condition in \eqref{eq:bound_on_deviations}. This yields $z_n=y_n$ and as demonstrated in Section~\ref{sec:special_cases}, with a particular choice of $\mu_n$, 
\cref{alg:main} becomes
\begin{equation*}
\begin{aligned}
		     y_n&=x_n+\tfrac{\lambda_n-\lambda_0}{\lambda_n}(y_{n-1}-x_n)+u_n\\
    p_n &= \prt{M+\gamma A}^{-1}\prt{M y_n - \gamma C y_n}\\
		    x_{n+1} &= x_n + \lambda_n(p_n - y_n) + (\lambda_n-\lambda_0)(y_{n-1}-p_{n-1})\\
      u_{n+1}&=\kappa_n\tfrac{4-\gamma\hbeta-2\lambda_0}{2}(p_n-x_n+\tfrac{\lambda_n-\lambda_0}{\lambda_n}(x_n-p_{n-1}) - \tfrac{2-\gamma\hbeta-2\lambda_0}{4-\gamma\hbeta-2\lambda_0} u_n),
\end{aligned} 
\end{equation*}
that is initialized with $y_{-1}=p_{-1}=x_0$ and $u_0=0$.
This algorithm, under the condition $\kappa_n^2\leq\zeta_{n+1}$, satisfies the safeguarding condition. As we will see later, $\zeta_{n+1}\in[0,1]$ can be set arbitrarily (though stronger convergence conclusions can be drawn if $\zeta_n<1$ for all $n\in\nat$), indicating that as long as $\kappa_n\in[-1,1]$, this new algorithm satisfies the safeguarding condition by design.

In \cref{sec:special_cases}, we present two special cases of this iteration that we numerically evaluate in \cref{sec:numerical-experiments}. The first special case involves setting $\lambda_0=1$ and, for all $n\in\nat$, $\lambda_n=\lambda_0$ and $\kappa_n=\kappa\in(-1,1)$. As shown 
in \cref{sec:special_cases}, the resulting algorithm can be written as
\begin{equation}
\begin{aligned}
    p_n &= \lp\prt{M+\gamma A}^{-1}\circ\prt{M - \gamma C }\rp\lp p_{n-1}+u_n-u_{n-1}\rp\\
      u_{n+1}&=\kappa\lp\tfrac{2-\gamma\hbeta}{2}(p_n-p_{n-1}+u_{n-1}) + \tfrac{\gamma\hbeta}{2} u_n\rp
\end{aligned}
\label{eq:alg-constant-kappa-preview}
\end{equation}
that is initialized with $u_{-1}=u_0=0$.
This algorithm converges weakly to a solution of the inclusion problem and setting $\kappa=0$ gives $u_n=0$ for all $n\in\nat$ and the algorithm reduces to the standard FB method.

The second special case is obtained by letting $\kappa_{n}=\tfrac{\lambda_{n+1}-\lambda_0}{\lambda_{n+1}}\in[0,1)$ 
resulting, as shown in \cref{sec:special_cases}, in the algorithm: 
\begin{equation}
    \begin{aligned}
    p_n &= \prt{M+\gamma A}^{-1}\prt{M y_n - \gamma C y_n}\\
     y_{n+1} 
     &=y_{n} + \lp\tfrac{\lambda_0\lambda_n}{\lambda_{n+1}}+\tfrac{\lambda_{n+1}-\lambda_0}{\lambda_{n+1}}\tfrac{4-\gamma\hbeta-2\lambda_0}{2}\rp(p_n - y_n)  \\
     &\quad+\tfrac{\lambda_n-\lambda_0}{\lambda_{n+1}}\lp(y_n- y_{n-1})+\tfrac{4-\gamma\hbeta}{2}(y_{n-1} - p_{n-1})\rp,
\end{aligned}
\label{eq:alg-kappa-equals-alpha-preview}
\end{equation}
that is initialized with $y_{-1}=p_{-1}$. We will pay particular attention to the choice $\lambda_n=\left(1-\tfrac{\gamma\hbeta}{4}\right)^e(1+n)^e$ with $e\in[0,1]$. The choice $e=0$ gives the standard FB method and, as shown in \cref{sec:accelerated_ppm_Halpern}, the choice $e=1$ has the Halpern iteration analyzed in \cite{lieder2021convergence} and the accelerated proximal point method in \cite{kim2021accelerated} as special cases. By choosing an $e$ value between the allowed extremes, we can interpolate between these methods. We show that $\|p_n-y_n\|_M^2$ with this choice of \seqq{\lambda_n} converges as $\mathcal{O}\lp\tfrac{1}{n^{2e}}\rp$ (and, if $\hbeta>\beta$, as $\mathcal{O}\lp\tfrac{1}{n}\rp$ for all $e\in[0,1]$)
implying that the convergence rate can be tuned by selecting $e$. The requirements we will pose on the parameters in \cref{sec:convergence-analysis} to guarantee convergence state that $\seqq{\lambda_n}$ can grow at most linearly, meaning values of $e>1$ are not viable and the best possible rate is $\mathcal{O}\lp\tfrac{1}{n^2}\rp$ obtained by letting $e=1$.
As shown in \cref{sec:special_cases}, this case recovers the exact $\mathcal{O}\lp\tfrac{1}{n^2}\rp$ rate results of the Halpern iteration in \cite{lieder2021convergence} and the accelerated proximal point method in \cite{kim2021accelerated}. 

In \cref{sec:numerical-experiments}, we present numerical experiments on a simple skew-symmetric monotone inclusion problem, originating from the problem $\max_{y\in\reals}\min_{x\in\reals}xy$. We find that both algorithms \eqref{eq:alg-constant-kappa-preview} and \eqref{eq:alg-kappa-equals-alpha-preview} can significantly outperform the standard FB method and the Halpern iteration when $\kappa$ and $e$ are appropriately chosen.

\section{Convergence analysis}\label{sec:convergence-analysis}

In this section, we conduct a Lyapunov-based convergence analysis for \cref{alg:main}. In \cref{thm:LP-identity}, we define a {\emph{Lyapunov function}}, $V_n$, based on the iterates generated by \cref{alg:main}, and present an identity that establishes a relation between $V_{n+1}$ and $V_n$. In \cref{thm:LP-ineq}, we introduce additional assumptions and derive a {\emph{Lyapunov inequality}} that serves as the main tool for the convergence and convergence rate analysis in \cref{thm:convergence}. 

The proof of our first theorem is lengthy and only based on algebraic manipulations and is therefore deferred to \cref{sec:omitted-proofs}. The equality in the proof is validated with symbolic calculations in
\begin{center}
\url{https://github.com/sbanert/incorporating-history-and-deviations}.
\end{center}

\begin{theorem}\label{thm:LP-identity}
Suppose that \cref{assum:monotone_inclusion} holds. Let $\xs$ be an arbitrary point in $\zer{A+C}$ and $V_0=\norm{x_0-\xs}_M^2$, and based on the iterates generated by \cref{alg:main}, for all $n\in\nat$, let
\begin{equation}\label{eq:LP-function-def}
    V_{n+1} \defeq \norm{x_{n+1}-\xs}_M^2 + 2\lambda_{n+1}\gamma_{n+1}\alpha_{n+1}\phi_n + \el{n},
\end{equation}
where
\begin{equation}\label{eq:phi-def}
    \phi_n \defeq \inpr{\tfrac{z_n-p_n}{\gamma_n}}{p_n-\xs}_M + \tfrac{\hbeta}{4}\norm{y_n-p_n}_M^2,
\end{equation}
and $\el{n}$ given by \eqref{eq:ell_n}. Then,
\begin{equation*}
    \begin{aligned}
        V_{n+1}  + 2\gamma_n(\lambda_n &- \balpha_{n+1}\lambda_{n+1})\phi_n + \el{n-1} \\
        &= V_n + (\lambda_{n}+\mu_{n})\lp\frac{\ttheta_{n}}{\htheta_{n}}\norm{u_{n}}_{M}^2 + \frac{\htheta_{n}}{\theta_{n}}\norm{v_{n}}_{M}^2\rp
    \end{aligned}
\end{equation*}
holds for all $n\in\nat$.
\end{theorem}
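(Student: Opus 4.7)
The plan is to verify the identity by pure algebraic expansion, since the statement invokes neither monotonicity nor cocoercivity---it is a bookkeeping equality among the iterates, the deviations $u_n, v_n$, and the parameters defined in Step~\ref{alg:main:auxiliary-parameters}. A helpful first step is to note that the definitions of $\alpha_n$ and $\balpha_n$ force $\gamma_{n+1}\alpha_{n+1}=\gamma_n\balpha_{n+1}$, so the two $\phi_n$-terms on the left combine as
\begin{equation*}
2\lambda_{n+1}\gamma_{n+1}\alpha_{n+1}+2\gamma_n(\lambda_n-\balpha_{n+1}\lambda_{n+1})=2\gamma_n\lambda_n,
\end{equation*}
and the $\elll{n-1}$ terms cancel. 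What remains to show is the single-step identity
\begin{equation*}
\norm{x_{n+1}-\xs}_M^2+2\gamma_n\lambda_n\phi_n+\elll{n}=\norm{x_n-\xs}_M^2+2\gamma_n\lambda_n\alpha_n\phi_{n-1}+(\lambda_n+\mu_n)\lp\tfrac{\ttheta_n}{\htheta_n}\norm{u_n}_M^2+\tfrac{\htheta_n}{\theta_n}\norm{v_n}_M^2\rp,
\end{equation*}
which no longer couples iteration indices beyond $n$ and $n-1$.

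Next I would expand every term in a common basis. Using Steps~\ref{alg:main:y}--\ref{alg:main:x}, the vectors $y_n$, $z_n$, and $x_{n+1}$ are written as explicit linear combinations of $x_n$, $y_{n-1}$, $p_{n-1}$, $z_{n-1}$, $p_n$, $u_n$, and $v_n$. Substituting into $\norm{x_{n+1}-\xs}_M^2$ gives $\norm{x_n-\xs}_M^2$ plus cross terms in $p_n-z_n$ and $z_{n-1}-p_{n-1}$ paired (in the $M$-inner product) with $x_n-\xs$, plus a quadratic remainder. The same substitution is applied inside $\phi_n$ (through $(z_n-p_n)/\gamma_n$ and $y_n-p_n$) and inside each of the three bracketed expressions in \eqref{eq:ell_n} for $\elll{n}$, and analogously for $\phi_{n-1}$. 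Crucially, the forward--backward equation in Step~\ref{alg:main:FB-step} is \emph{not} unfolded: $p_n$ is treated as a free vector, and since $Cy_n$ does not appear anywhere in the claimed identity, the resolvent structure plays no role in the algebra.

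After these substitutions, both sides become quadratic polynomials in the vectors $x_n-\xs$, $p_n-x_n$, $x_n-p_{n-1}$, $z_{n-1}-p_{n-1}$, $y_{n-1}-p_{n-1}$, $u_n$, and $v_n$, taken with respect to the $M$-inner product. Matching the coefficient of each such monomial turns the identity into a finite list of scalar identities in $\gamma_n, \gamma_{n-1}, \lambda_n, \mu_n, \hbeta$, each of which follows by direct substitution of the closed-form definitions of $\alpha_n, \balpha_n, \theta_n, \htheta_n, \btheta_n, \ttheta_n$ from Step~\ref{alg:main:auxiliary-parameters}. The main obstacle is strictly combinatorial: the number of monomials is large, many carry fractional parameter coefficients, and a single sign error propagates invisibly through the remainder of the computation. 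This is exactly why the authors verify the final equality with a symbolic algebra system rather than by hand, and a written-out proof would record the reduction above and refer the terminal coefficient-matching to that symbolic check.
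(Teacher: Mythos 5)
Your proposal is correct and takes essentially the same approach as the paper's proof: the paper likewise merges the two $\phi_n$ coefficients into $2\gamma_n\lambda_n$ via $\gamma_n\balpha_{n+1}=\gamma_{n+1}\alpha_{n+1}$, cancels $\el{n-1}$, and verifies the remaining single-step identity by pure algebraic expansion---treating $p_n$ as a free vector and never invoking the resolvent step---with every monomial coefficient shown to vanish using the closed-form parameter identities (collected in \cref{prop:parameters} and \cref{lemma:ell_n-identical-expressions}). The only difference is executional: the paper carries out the coefficient matching by hand, organized as a sequential elimination of the terms involving $p_n$, then $z_n$, then $y_n$, then $x_n$, and finally the lagged iterates, in addition to the symbolic verification to which you defer.
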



The identity relation presented in \cref{thm:LP-identity} provides meaningful insights only when we can guarantee the non-negativity of all its constituent terms. Under this condition, one could immediately infer, for instance, that \seqq{V_n} is non-increasing. The non-negativity of these terms is contingent on the selection of the parameter sequences \seqq{\zeta_n}, \seqq{\gamma_n}, \seqq{\mu_n}, and \seqq{\lambda_n}. To reduce the degrees of freedom and facilitate a clearer exposition, we constrain ourselves to a non-decreasing $\seqq{\lambda_n}$, and
\begin{align}
\mu_n=\frac{1}{\lambda_0}\lambda_n^2-\lambda_n
\label{eq:mu_def}
\end{align}
for all $n\in\mathbb{N}$. This implies $\mu_n\geq 0$ and offers a slightly less general algorithm, yet it encompasses all special cases in \cref{sec:special_cases}. We next state our assumptions on the parameter sequences.

\begin{assumption}\label{assum:parameters}
 Assume that $\varepsilon>0$, $\epsilon_0,\epsilon_1\geq 0$, $\lambda_0>0$, and that, for all $n\in\nat$, $\mu_n=\frac{1}{\lambda_0}\lambda_n^2-\lambda_n$ and the following hold: 
\renewcommand\theenumi\labelenumi
\renewcommand{\labelenumi}{{(\roman{enumi})}}
\begin{enumerate}[ref=\Cref{assum:parameters}~\theenumi]
\item $0\leq\zeta_n\leq 1-\epsilon_0$; \label{itm:assump-param-zeta}

\item $\varepsilon\leq\gamma_n\hbeta\leq 4-2\lambda_0-\varepsilon$;
\label{itm:assump-param-lambda-gamma}

\item $\lambda_{n+1}\geq\lambda_{n}$ and $\gamma_n\lambda_n-\gamma_{n-1}\lambda_{n-1}\leq \gamma_{n}\lambda_0-\epsilon_1$; 
\label{itm:assump-param-lambda}

\item $\hbeta\geq\beta$. \label{itm:assump-param-hbeta}

\end{enumerate}
\end{assumption}

\begin{remark}
\label{rem:Assumption-ii-remark}
    \labelcref{itm:assump-param-zeta} gives an upper bound for $\zeta_n$ to be less than or equal to 1. The variable, $\zeta_n$, multiplies $\elll{n-1}$ in the right-hand side of the safeguarding condition \eqref{eq:bound_on_deviations}, effectively contributing to limit the size of the ball from which the deviations $u_n$ and $v_n$ are selected. A consistent choice is $\zeta_n=1-\epsilon_0$. 
        \labelcref{itm:assump-param-lambda-gamma} sets requirements on the relation between the initial relaxation parameter $\lambda_0$ and the step size parameter $\gamma_n$. An alterative expression of the upper bound is given by
    \begin{align}
        4-\gamma_n\hbeta-2\lambda_0\geq\varepsilon,
        \label{eq:theta_help_bound}
    \end{align}
    implying that
    \begin{align}
        \gamma_n\hbeta\lambda_0\leq (4-\varepsilon)\lambda_0-2\lambda_0^2=-(\sqrt{2}\lambda_0-\sqrt{2})^2+2-\varepsilon\lambda_0\leq 2-\varepsilon\lambda_0.
        \label{eq:htheta_help_bound}
    \end{align}
    This inequality will be used to bound certain algorithm parameters. Note that, similarly to in \cite{latafat2017asymmetric,Giselsson2019NonlinearFS-journal}, we can allow for a $\gamma_n>\tfrac{2}{\beta}$ with the trade-off of using a relaxation parameter $\lambda_0<1$.
    \labelcref{itm:assump-param-lambda} states that $\seqq{\lambda_n}$ is non-decreasing, resulting in $\mu_n\geq 0$, and enforces a linear growth upper bound since $\gamma_n$ is both positive and upper bounded. We will later see that our algorithm can converge as $O(\tfrac{1}{\lambda_n^2})$, with this upper bound resulting in a best possible convergence rate of $O(\tfrac{1}{n^2})$. 
    Finally, \labelcref{itm:assump-param-hbeta} sets requirements on $\hbeta$. While the choice $\hbeta=\beta$ always works, selecting $\hbeta>\beta$ guarantees convergence of certain parameter sequences.
    Note that when $\lambda_0=1$ and $\hbeta=\beta$, it follows from \labelcref{itm:assump-param-lambda-gamma} that $\gamma_n\leq\tfrac{2-\varepsilon}{\beta}$, aligning with the conventional step size upper bound for forward--backward splitting.
\end{remark}

Our convergence analysis requires that specific parameter sequences are non-negative or, in certain cases, are lower bounded by a positive number. Before showing that \cref{assum:parameters} ensures this, we provide expressions for the following sequences defined in \cref{alg:main} in terms of $\lambda_n$, $\gamma_n$, and $\hbeta$:
\begin{equation}
\begin{aligned}
\alpha_n&=\tfrac{\mu_n}{\lambda_n+\mu_n}=\tfrac{\lambda_n-\lambda_0}{\lambda_n},\\
\balpha_n&=\tfrac{\gamma_n\mu_n}{\gamma_{n-1}(\lambda_n+\mu_n)}=\tfrac{\gamma_n}{\gamma_{n-1}}\tfrac{\lambda_n-\lambda_0}{\lambda_n},\\
\theta_n&=(4-\gamma_n\hbeta)(\lambda_n+\mu_n)-2\lambda_n^2=\tfrac{4-\gamma_n\hbeta-2\lambda_0}{\lambda_0}\lambda_n^2,\\
\htheta_n&=2\lambda_n+2\mu_n-\gamma_n\hbeta\lambda_n^2=\tfrac{2-\lambda_0\gamma_n\hbeta}{\lambda_0}\lambda_n^2,\\
\btheta_n&=\lambda_n+\mu_n-\lambda_n^2=\tfrac{1-\lambda_0}{\lambda_0}\lambda_n^2,\\
\ttheta_n&=(\lambda_n+\mu_n)\gamma_n\hbeta = \tfrac{\gamma_n\hbeta}{\lambda_0}\lambda_n^2.
\end{aligned}
\label{eq:parameters_for_quadratic_mu}
\end{equation}
Note that the final four quantities are quadratic in $\lambda_n$.

\begin{proposition}\label{rem:lower-bound-on-coefficients}
Consider the quantities defined in \cref{alg:main} and suppose that \cref{assum:parameters} holds. The parameter sequences \seqq{\theta_n}, \seqq{\htheta_n}, \seqq{\ttheta_n}, \seqq{\frac{\ttheta_n}{\htheta_n}}, \seqq{\frac{\htheta_n}{\theta_n}}, and \seqq{\frac{\theta_n}{\lambda_n^2}} are lower bounded by a positive constant  and $\seqq{\alpha_n}$ and $\seqq{\gamma_n(\lambda_n-\balpha_{n+1}\lambda_{n+1})-\epsilon_1}$ are non-negative. 
\end{proposition}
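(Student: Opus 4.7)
The plan is to verify each claim by plugging in the quadratic-in-$\lambda_n$ expressions from \eqref{eq:parameters_for_quadratic_mu} and then invoking the scalar bounds supplied by \cref{assum:parameters} together with the two derived inequalities \eqref{eq:theta_help_bound} and \eqref{eq:htheta_help_bound} from \cref{rem:Assumption-ii-remark}. The only substantive algebraic manipulation happens in the last item; everything else is a direct inspection.

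First I would note that \labelcref{itm:assump-param-lambda} yields $\lambda_n\geq\lambda_0>0$ for all $n\in\nat$, so $\alpha_n=\tfrac{\lambda_n-\lambda_0}{\lambda_n}\geq 0$, which handles the nonnegativity of $\seqq{\alpha_n}$. Next, using \eqref{eq:parameters_for_quadratic_mu} and \labelcref{itm:assump-param-lambda-gamma},
\begin{itemize}
\item $\tfrac{\theta_n}{\lambda_n^2}=\tfrac{4-\gamma_n\hbeta-2\lambda_0}{\lambda_0}\geq\tfrac{\varepsilon}{\lambda_0}$ by \eqref{eq:theta_help_bound};
\item $\theta_n=\tfrac{4-\gamma_n\hbeta-2\lambda_0}{\lambda_0}\lambda_n^2\geq\tfrac{\varepsilon}{\lambda_0}\lambda_0^2=\varepsilon\lambda_0$;
\item $\htheta_n=\tfrac{2-\lambda_0\gamma_n\hbeta}{\lambda_0}\lambda_n^2\geq\varepsilon\lambda_n^2\geq\varepsilon\lambda_0^2$ using \eqref{eq:htheta_help_bound};
\item $\ttheta_n=\tfrac{\gamma_n\hbeta}{\lambda_0}\lambda_n^2\geq\tfrac{\varepsilon}{\lambda_0}\lambda_0^2=\varepsilon\lambda_0$;
\item $\tfrac{\ttheta_n}{\htheta_n}=\tfrac{\gamma_n\hbeta}{2-\lambda_0\gamma_n\hbeta}\geq\tfrac{\varepsilon}{2}$, since the numerator is at least $\varepsilon$ and the denominator is at most $2$;
\item $\tfrac{\htheta_n}{\theta_n}=\tfrac{2-\lambda_0\gamma_n\hbeta}{4-\gamma_n\hbeta-2\lambda_0}\geq\tfrac{\varepsilon\lambda_0}{4}$, since the numerator is at least $\varepsilon\lambda_0$ by \eqref{eq:htheta_help_bound} and the denominator is at most $4$.
\end{itemize}
Each of these establishes a strictly positive lower bound, as claimed.

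The main (and only nontrivial) obstacle is the last claim concerning $\gamma_n(\lambda_n-\balpha_{n+1}\lambda_{n+1})-\epsilon_1$. Here I would use the definition of $\balpha_{n+1}$ in \eqref{eq:parameters_for_quadratic_mu} to rewrite
\begin{equation*}
\gamma_n\balpha_{n+1}\lambda_{n+1}=\gamma_n\cdot\tfrac{\gamma_{n+1}}{\gamma_n}\cdot\tfrac{\lambda_{n+1}-\lambda_0}{\lambda_{n+1}}\cdot\lambda_{n+1}=\gamma_{n+1}\lambda_{n+1}-\gamma_{n+1}\lambda_0,
\end{equation*}
so that
\begin{equation*}
\gamma_n(\lambda_n-\balpha_{n+1}\lambda_{n+1})=\gamma_{n+1}\lambda_0-(\gamma_{n+1}\lambda_{n+1}-\gamma_n\lambda_n).
\end{equation*}
Applying the upper bound $\gamma_{n+1}\lambda_{n+1}-\gamma_n\lambda_n\leq\gamma_{n+1}\lambda_0-\epsilon_1$ from \labelcref{itm:assump-param-lambda} (shifted to index $n+1$) yields $\gamma_n(\lambda_n-\balpha_{n+1}\lambda_{n+1})\geq\epsilon_1$, which completes the proof.
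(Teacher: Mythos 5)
Your proposal is correct and follows essentially the same route as the paper's proof: it substitutes the closed-form expressions from \eqref{eq:parameters_for_quadratic_mu}, bounds each ratio using \eqref{eq:theta_help_bound} and \eqref{eq:htheta_help_bound} with the observation that the relevant denominators are at most $2$ and $4$, and establishes the final claim via the identity $\gamma_n\balpha_{n+1}\lambda_{n+1}=\gamma_{n+1}\lambda_{n+1}-\gamma_{n+1}\lambda_0$ combined with the index-shifted bound from \labelcref{itm:assump-param-lambda}. The only cosmetic difference is that you spell out the explicit positive lower bounds for $\theta_n$, $\htheta_n$, and $\ttheta_n$ (using $\lambda_n\geq\lambda_0$), where the paper simply notes these follow immediately from the same ingredients.
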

\begin{proof}
Let us first consider $\theta_n$, $\htheta_n$, and $\ttheta_n$. Then \cref{assum:parameters}, \eqref{eq:parameters_for_quadratic_mu}, \eqref{eq:theta_help_bound}, and \eqref{eq:htheta_help_bound} immediately imply that $\theta_n$, $\htheta_n$, and $\ttheta_n$ are lower bounded by a positive constant. Moreover, since $2-\lambda_0\gamma_n\hbeta\geq\varepsilon\lambda_0>0$ by \eqref{eq:htheta_help_bound}, we have
\begin{align*}
    \frac{\ttheta_n}{\htheta_n}=\frac{\gamma_n\hbeta}{2-\lambda_0\gamma_n\hbeta}\geq\frac{\gamma_n\hbeta}{2}\geq\frac{\varepsilon}{2}> 0
\end{align*}
and since from\eqref{eq:theta_help_bound}, $4-\gamma_n\hbeta-2\lambda_0\geq\varepsilon>0$, we have
\begin{align*}
\frac{\htheta_n}{\theta_n}=\frac{2-\lambda_0\gamma_n\hbeta}{4-\gamma_n\hbeta-2\lambda_0}\geq\frac{2-\lambda_0\gamma_n\hbeta}{4}\geq \frac{\lambda_0\varepsilon}{4}>0
\end{align*}
and
\begin{align*}
    \frac{\theta_n}{\lambda_n^2}=\frac{4-\gamma_n\hbeta-2\lambda_0}{\lambda_0}\geq \frac{\varepsilon}{\lambda_0}>0.
\end{align*}
That $\alpha_n\geq 0$ follows trivially from nonegativity of $\mu_n$ and that $\lambda_n>0$. Finally, 
\begin{align*}
    \gamma_n(\lambda_n-\balpha_{n+1}\lambda_{n+1}) 
    &= \gamma_n\lambda_n-\gamma_{n+1}\frac{\lambda_{n+1}-\lambda_0}{\lambda_{n+1}}\lambda_{n+1}\\
     &=\gamma_n\lambda_n-\gamma_{n+1}\lambda_{n+1}+\gamma_{n+1}\lambda_0\geq\epsilon_1
\end{align*}
by \labelcref{itm:assump-param-lambda}.
\end{proof}

In the following result, we introduce a so-called Lyapunov inequality that serves as the foundation of our main convergence results.

\begin{theorem}\label{thm:LP-ineq}
Suppose that \cref{assum:monotone_inclusion} and  \cref{assum:parameters} hold. Let $\xs$ be an arbitrary point in $\zer{A+C}$, and the sequences \seqq{\el{n}}, \seqq{V_n}, and \seqq{\phi_n}  be constructed in terms of the iterates obtained from \cref{alg:main}, as per \eqref{eq:ell_n}, \eqref{eq:LP-function-def}, and \eqref{eq:phi-def} respectively.
Then, for all $n\in\nat$,
\renewcommand\theenumi\labelenumi
\renewcommand{\labelenumi}{{(\roman{enumi})}}
\begin{enumerate}[noitemsep, ref=\Cref{thm:LP-ineq}~\theenumi]
    \item  the safeguarding upper bound 
    \begin{equation*}
                \zeta_{n+1}\elll{n} \geq \tfrac{\zeta_{n+1}\theta_n}{2}\norm{p_n-x_n+\alpha_n(x_n-p_{n-1}) +\tfrac{\gamma_n\hbeta\lambda_n^2}{\htheta_n}u_n - \tfrac{2\btheta_n}{\theta_n}v_n}_M^2\geq 0;
            \end{equation*}
   \label{itm:thm:LP-ineq-elln}

\item the term safeguarded by $\zeta_{n+1}\elll{n}$,
\begin{align*}
\left(\lambda_{n+1}+\mu_{n+1}\right)\lp\frac{\ttheta_{n+1}}{\htheta_{n+1}}\norm{u_{n+1}}_{M}^2 + \frac{\htheta_{n+1}}{\theta_{n+1}}\norm{v_{n+1}}_{M}^2\rp\geq 0;
\end{align*}
\label{itm:thm:safeguarded-by-elln}
    
\item $\phi_n\geq 0$, more specifically, if $\beta>0$:
\begin{equation*}
    \phi_n\geq\tfrac{\hbeta}{4}\norm{\tfrac{2}{\hbeta}(Cy_n-C\xs)+M(p_n-y_n)}_{M^{-1}}^2+\tfrac{\hbeta-\beta}{\hbeta\beta}\norm{Cy_n-C\xs}_{M^{-1}}^2\geq 0,
\end{equation*}
and if $\beta=0$:
\begin{equation*}
    \phi_n\geq\tfrac{\hbeta}{4}\norm{p_n-y_n}_M^2\geq 0;
\end{equation*}
\label{itm:thm:LP-ineq-phin}
    
    \item the Lyapunov function $V_n\geq 0$; 
    \label{itm:thm:LP-ineq-Vn}

    \item the following Lyapunov inequality holds \label{itm:thm:LP-ineq-LP-ineq}
        \begin{equation*}
            V_{n+1}+2\gamma_n(\lambda_n-\balpha_{n+1}\lambda_{n+1})\phi_n + (1-\zeta_n)\elll{n-1} \leq V_n.
        \end{equation*}
\end{enumerate}
\end{theorem}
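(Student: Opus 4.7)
The plan is to prove the five items in the order (i), (ii), (iii), (iv), (v). Parts (i) and (iii) are the substantive content; each rests on invoking the monotonicity of $A$ and the cocoercivity of $C$ once and then completing a square. Parts (ii) and (iv) are immediate from the sign information in \cref{rem:lower-bound-on-coefficients}, and (v) is a direct combination of \cref{thm:LP-identity} with the safeguarding condition \eqref{eq:bound_on_deviations}.

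For (i), the forward--backward step in Step \ref{alg:main:FB-step} yields $\tfrac{M(z_n-p_n)}{\gamma_n}-Cy_n\in Ap_n$ and the analogue at index $n-1$. Monotonicity of $A$ between these two points gives
\begin{equation*}
\Bigl\langle\tfrac{z_n-p_n}{\gamma_n}-\tfrac{z_{n-1}-p_{n-1}}{\gamma_{n-1}},\,p_n-p_{n-1}\Bigr\rangle_M \geq \langle Cy_n-Cy_{n-1},\,p_n-p_{n-1}\rangle.
\end{equation*}
I would then decompose $p_n-p_{n-1}=a_n+(y_n-y_{n-1})$ with $a_n\defeq(p_n-y_n)-(p_{n-1}-y_{n-1})$ and apply cocoercivity of $C$ (together with $\hbeta\geq\beta$ from \labelcref{itm:assump-param-hbeta}) to the $(y_n-y_{n-1})$-component. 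The second and third terms of $\elll{n}$ in \eqref{eq:ell_n} are then lower bounded by a single completed square, $\tfrac{\mu_n\gamma_n\hbeta}{2}\bigl\|a_n+\tfrac{2}{\hbeta}M^{-1}(Cy_n-Cy_{n-1})\bigr\|_M^2\geq 0$. Multiplying by $\zeta_{n+1}\geq 0$ from \labelcref{itm:assump-param-zeta} and noting $\theta_n>0$ by \cref{rem:lower-bound-on-coefficients} delivers the claimed chain.

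Part (ii) follows directly from \cref{rem:lower-bound-on-coefficients}: the ratios $\tfrac{\ttheta_{n+1}}{\htheta_{n+1}}$ and $\tfrac{\htheta_{n+1}}{\theta_{n+1}}$ are strictly positive and $\lambda_{n+1}+\mu_{n+1}>0$. For (iii), using the same FB inclusion along with $-C\xs\in A\xs$, monotonicity of $A$ gives $\langle\tfrac{z_n-p_n}{\gamma_n},p_n-\xs\rangle_M\geq\langle Cy_n-C\xs,p_n-\xs\rangle$; splitting $p_n-\xs=(p_n-y_n)+(y_n-\xs)$, using cocoercivity on the $(y_n-\xs)$-part (when $\beta>0$), and redistributing $\tfrac{1}{\beta}=\tfrac{1}{\hbeta}+\tfrac{\hbeta-\beta}{\hbeta\beta}$ lets me complete a square in $p_n-y_n$ and recover the stated lower bound. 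The degenerate case $\beta=0$ forces $Cy_n=C\xs$, whence only $\phi_n\geq\tfrac{\hbeta}{4}\|p_n-y_n\|_M^2$ remains. Part (iv) then follows term by term from \eqref{eq:LP-function-def} since $\alpha_{n+1}\geq 0$ (\cref{rem:lower-bound-on-coefficients}), $\gamma_{n+1},\lambda_{n+1}>0$, $\phi_n\geq 0$ by (iii), and $\elll{n}\geq 0$ by (i); the base case $V_0=\|x_0-\xs\|_M^2\geq 0$ is trivial.

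Finally, for (v) I would combine \cref{thm:LP-identity} with the safeguarding condition \eqref{eq:bound_on_deviations} at index $n$: the bracketed term on the right-hand side of the identity is precisely the quantity bounded above by $\zeta_n\elll{n-1}$, and rearranging yields the desired Lyapunov inequality. The main obstacle is the algebraic completion of squares in (i) and (iii); once those are in place, the remaining items are bookkeeping with the sign information in \cref{rem:lower-bound-on-coefficients}.
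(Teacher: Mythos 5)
Your proposal is correct and takes essentially the same route as the paper's own proof: parts (i) and (iii) via monotonicity of $A$ (between $p_n,p_{n-1}$ and between $p_n,\xs$, respectively) combined with cocoercivity of $C$, the split $\tfrac{1}{\beta}=\tfrac{1}{\hbeta}+\tfrac{\hbeta-\beta}{\hbeta\beta}$, and a completed square, and parts (ii), (iv), (v) via the sign information in \cref{rem:lower-bound-on-coefficients} together with \cref{thm:LP-identity} and the index-shifted safeguarding condition \eqref{eq:bound_on_deviations}. The only cosmetic differences are that you package the last two terms of $\elll{n}$ as a single perfect square $\tfrac{\mu_n\gamma_n\hbeta}{2}\bigl\|a_n+\tfrac{2}{\hbeta}M^{-1}(Cy_n-Cy_{n-1})\bigr\|_M^2$ (the paper instead forms $\varphi_n\geq 0$ and subtracts a square), and that in (i) you leave the degenerate case $\beta=0$ implicit (where $C$ is constant and the bound is trivial), which the paper treats separately.
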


\begin{proof}

\labelcref{itm:thm:LP-ineq-elln}. In view of \eqref{eq:bound_on_deviations} and since, by \cref{assum:parameters,rem:lower-bound-on-coefficients}, $2\mu_{n}\gamma_{n}\geq 0$, $\theta_n>0$, and $\zeta_{n+1}\geq 0$, the statement reduces to showing that 
\begin{align*}
\hvarphi_n \defeq\inpr{\tfrac{z_{n}-p_{n}}{\gamma_{n}}-\tfrac{z_{n-1}-p_{n-1}}{\gamma_{n-1}}}{p_{n}-p_{n-1}}_M+\tfrac{\hbeta}{4}\norm{p_{n}-y_{n}-(p_{n-1}-y_{n-1})}_M^2\geq 0.
\end{align*}
It follows from Step~\ref{alg:main:FB-step} of \cref{alg:main} that
\begin{equation}\label{eq:pn_resolvent}
    \frac{Mz_n-Mp_n}{\gamma_n}-Cy_n\in{Ap_n}.
\end{equation}
Combined with montonicity of $A$, this gives
\begin{align}
    0&\leq\inpr{\tfrac{Mz_n-Mp_n}{\gamma_n}-Cy_n-\tfrac{Mz_{n-1}-Mp_{n-1}}{\gamma_{n-1}}+Cy_{n-1}}{p_n-p_{n-1}}.\label{eq:monotonicity-consequtive-pn}
\end{align}
If $\beta=0$,  $C$ is constant, implying that the right-hand side reduces to the first term defining $\hvarphi_n$. Therefore, $\hvarphi_n\geq 0$ since it is constructed by adding two non-negative terms. It remains to show $\hvarphi_n\geq 0$ when $\beta>0$. From $\tfrac{1}{\beta}$-cocoercivity of $C$ w.r.t. $\norm{\cdot}_M$, we have
\begin{align}
    0&\leq\inpr{Cy_n-Cy_{n-1}}{y_n-y_{n-1}}-\tfrac{1}{\beta}\norm{Cy_n-Cy_{n-1}}_{M^{-1}}^2.\label{eq:cocoercivity-consecutive-pn}
\end{align}
Adding \eqref{eq:monotonicity-consequtive-pn} and \eqref{eq:cocoercivity-consecutive-pn} to form $\varphi_n$ gives
\begin{align}
    \varphi_n &\defeq\inpr{\tfrac{Mz_n-Mp_n}{\gamma_n}-Cy_n-\tfrac{Mz_{n-1}-Mp_{n-1}}{\gamma_{n-1}}+Cy_{n-1}}{p_n-p_{n-1}}\\
    &\quad+\inpr{Cy_n-Cy_{n-1}}{y_n-y_{n-1}}-\tfrac{1}{\beta}\norm{Cy_n-Cy_{n-1}}_{M^{-1}}^2\\
    &=\inpr{\tfrac{z_n-p_n}{\gamma_n}-\tfrac{z_{n-1}-p_{n-1}}{\gamma_{n-1}}}{p_n-p_{n-1}}_M-\tfrac{1}{\hbeta}\norm{Cy_n-Cy_{n-1}}_{M^{-1}}^2 \nonumber\\
    &\quad+\inpr{Cy_n-Cy_{n-1}}{y_n-y_{n-1}-(p_n-p_{n-1})}\nonumber\\
    &=\inpr{\tfrac{z_n-p_n}{\gamma_n}-\tfrac{z_{n-1}-p_{n-1}}{\gamma_{n-1}}}{p_n-p_{n-1}}_M + \tfrac{\hbeta}{4}\norm{p_n-p_{n-1}-y_n+y_{n-1}}_M^2\nonumber\\
    &\quad - \tfrac{\hbeta}{4}\norm{\tfrac{2}{\hbeta}(Cy_n-Cy_{n-1})+M(p_n-p_{n-1}-y_n+y_{n-1})}_{M^{-1}}^2\nonumber\\
    &=\hvarphi_n - \tfrac{\hbeta}{4}\norm{\tfrac{2}{\hbeta}(Cy_n-Cy_{n-1})+M(p_n-p_{n-1}-y_n+y_{n-1})}_{M^{-1}}^2,\label{eq:varphi-epsilon}
\end{align}
where we have used that
\begin{align}
    \inpr{s}{t}-\tfrac{1}{\delta}\norm{s}_{M^{-1}}^2=\tfrac{\delta}{4}\norm{t}_M^2-\tfrac{\delta}{4}\norm{\tfrac{2}{\delta}s-Mt}_{M^{-1}}^2
    \label{eq:inner_prod_norm_eq}
\end{align}
holds for all $t,s\in\PrimS$. Since $\varphi_n\geq 0$ by construction and $\hbeta\geq\beta> 0$, this implies that $\hvarphi_n\geq 0$.

\labelcref{itm:thm:safeguarded-by-elln}. This follows by \cref{assum:parameters} and \cref{rem:lower-bound-on-coefficients} that imply strict positiveness of $\lambda_{n+1}+\mu_{n+1}$, $\frac{\ttheta_{n+1}}{\htheta_{n+1}}$, and $\frac{\htheta_{n+1}}{\theta_{n+1}}$.

\labelcref{itm:thm:LP-ineq-phin}. Recall that
\begin{equation}
    \phi_n = \inpr{\tfrac{z_n-p_n}{\gamma_n}}{p_n-\xs}_M + \tfrac{\hbeta}{4}\norm{y_n-p_n}_M^2,
\end{equation}
as defined in \eqref{eq:phi-def}. Since $\xs\in\zer{A+C}$, we have
 $-C\xs\in{A\xs}$,
which combined with \eqref{eq:pn_resolvent} and montonicity of $A$ gives
\begin{align}
    0&\leq\inpr{\tfrac{Mz_n-Mp_n}{\gamma_n}-Cy_n+C\xs}{p_n-\xs}. \label{eq:monotonicity-pn-xs}
\end{align}
If $\beta=0$, $C$ is constant and the right hand side reduces to $\phi_n-\tfrac{\hbeta}{4}\|y_n-p_n\|_M^2$, which is non-negative for all $n\in\nat$ by \eqref{eq:monotonicity-pn-xs}. Let us now consider $\beta>0$. From $\tfrac{1}{\beta}$-cocoercivity of $C$ w.r.t. $\norm{\cdot}_M$, we have
\begin{align}
    0&\leq\inpr{Cy_n-C\xs}{y_n-\xs}-\tfrac{1}{\beta}\norm{Cy_n-C\xs}_{M^{-1}}^2. \label{eq:cocoercivity-pn-xs}
\end{align}
Construct $\wphi_n$ by adding \eqref{eq:monotonicity-pn-xs} and \eqref{eq:cocoercivity-pn-xs} to get 
\begin{align*}
    \wphi_n&\defeq \inpr{\tfrac{Mz_n-Mp_n}{\gamma_n}-Cy_n+C\xs}{p_n-\xs}\nonumber\\
    &\quad+ \inpr{Cy_n-C\xs}{y_n-\xs}-\tfrac{1}{\beta}\norm{Cy_n-C\xs}_{M^{-1}}^2\nonumber\\
    &= \inpr{\tfrac{z_n-p_n}{\gamma_n}}{p_n-\xs}_M + \inpr{Cy_n-C\xs}{y_n-p_n}-\tfrac{1}{\beta}\norm{Cy_n-C\xs}_{M^{-1}}^2, \\
    &= \inpr{\tfrac{z_n-p_n}{\gamma_n}}{p_n-\xs}_M + \inpr{Cy_n-C\xs}{y_n-p_n}-\tfrac{1}{\hbeta}\norm{Cy_n-C\xs}_{M^{-1}}^2\nonumber\\
    &\qquad+\left(\tfrac{1}{\hbeta}-\tfrac{1}{\beta}\right)\norm{Cy_n-C\xs}_{M^{-1}}^2\nonumber\\
    &= \inpr{\tfrac{z_n-p_n}{\gamma_n}}{p_n-\xs}_M + \tfrac{\hbeta}{4}\norm{y_n-p_n}_M^2 \nonumber\\
    &\qquad- \tfrac{\hbeta}{4}\norm{\tfrac{2}{\hbeta}(Cy_n-C\xs)+M(p_n-y_n)}_{M^{-1}}^2-\tfrac{\hbeta-\beta}{\hbeta\beta}\norm{Cy_n-C\xs}_{M^{-1}}^2\nonumber\\
    &= \phi_n - \tfrac{\hbeta}{4}\norm{\tfrac{2}{\hbeta}(Cy_n-C\xs)+M(p_n-y_n)}_{M^{-1}}^2-\tfrac{\hbeta-\beta}{\hbeta\beta}\norm{Cy_n-C\xs}_{M^{-1}}^2,
\end{align*}
where \eqref{eq:inner_prod_norm_eq} is used in the next to last equality. The result therefore follows since $\hbeta\geq\beta>0$ and $\wphi_n\geq0$ by construction.

\labelcref{itm:thm:LP-ineq-Vn}. 
Since $\el{n}\geq0$ by \labelcref{itm:thm:LP-ineq-elln}, $\phi_n\geq 0$ by \labelcref{itm:thm:LP-ineq-phin}, and the coefficients in front of $\phi_n$ in the definition of $V_n$ in \eqref{eq:LP-function-def} are non-negative by \cref{assum:parameters} and \cref{rem:lower-bound-on-coefficients}, we conclude that $V_n\geq0$.

\labelcref{itm:thm:LP-ineq-LP-ineq}.  By \cref{thm:LP-identity}, we have
\begin{align*}
    V_{n+1} + \el{n-1} &+ 2\gamma_n(\lambda_n-\balpha_{n+1}\lambda_{n+1})\phi_n\\
    &= V_n + (\lambda_{n}+\mu_{n})\lp\tfrac{\ttheta_{n}}{\htheta_{n}}\norm{u_{n}}_{M}^2 + \tfrac{\htheta_{n}}{\theta_{n}}\norm{v_{n}}_{M}^2\rp.
\end{align*}
Using this equality and \eqref{eq:bound_on_deviations} gives
\begin{align*}
    V_{n+1} &+ 2\gamma_n(\lambda_n-\balpha_{n+1}\lambda_{n+1})\phi_n +\elll{n-1}\leq V_n + \zeta_n\elll{n-1}.
\end{align*}
Moving $\zeta_n\el{n-1}$ to the other side  gives the desired result. This concludes the proof.
\end{proof}

This result demonstrates the feasibility of selecting $u_{n+1}$ and $v_{n+1}$ that meet the safeguarding condition. The obvious selection of $u_{n+1}=v_{n+1}=0$ is always viable, but we will provide in \cref{sec:special_cases} a nontrivial choice that consistently satisfies the condition and can enhance convergence. Furthermore, \cref{thm:LP-ineq} introduces a valuable Lyapunov inequality that will underpin our conclusions on convergence. Before stating these convergence results, we show boundedness of certain coefficient sequences.

\begin{lemma}\label{lem:boundedness-of-coefficients}
Consider the quantities defined in \cref{alg:main} and suppose that \cref{assum:parameters} holds. The sequences 
\seqq{\tfrac{\ttheta_n}{\htheta_n}}, 
\seqq{\tfrac{\lambda_n^2}{\htheta_n}}, 
\seqq{\tfrac{\btheta_n}{\theta_n}}, as well as
\seqq{\tfrac{(2-\gamma_n\hbeta)(\lambda_n+\mu_n)}{\theta_n}}
are bounded.
\end{lemma}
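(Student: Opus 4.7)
The plan is to reduce each ratio to a continuous function of the single quantity $\gamma_n\hbeta$, which by \labelcref{itm:assump-param-lambda-gamma} takes values in the compact interval $[\varepsilon, 4-2\lambda_0-\varepsilon]$, and then to show that the corresponding denominators are bounded away from zero.

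First, I would invoke the closed-form expressions in \eqref{eq:parameters_for_quadratic_mu}, which follow from $\mu_n = \tfrac{1}{\lambda_0}\lambda_n^2 - \lambda_n$. Substituting them, the $\lambda_n^2$ factors cancel pairwise, giving
\begin{equation*}
    \tfrac{\ttheta_n}{\htheta_n} = \tfrac{\gamma_n\hbeta}{2-\lambda_0\gamma_n\hbeta}, \qquad
    \tfrac{\lambda_n^2}{\htheta_n} = \tfrac{\lambda_0}{2-\lambda_0\gamma_n\hbeta}, \qquad
    \tfrac{\btheta_n}{\theta_n} = \tfrac{1-\lambda_0}{4-\gamma_n\hbeta-2\lambda_0},
\end{equation*}
and, using $\lambda_n + \mu_n = \tfrac{\lambda_n^2}{\lambda_0}$, also
\begin{equation*}
    \tfrac{(2-\gamma_n\hbeta)(\lambda_n+\mu_n)}{\theta_n} = \tfrac{2-\gamma_n\hbeta}{4-\gamma_n\hbeta-2\lambda_0}.
\end{equation*}
In each case $\lambda_0$ is a fixed positive constant, so the ratios depend on $n$ only through $\gamma_n\hbeta$.

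Next, I would bound each denominator away from zero using the consequences of \labelcref{itm:assump-param-lambda-gamma} already collected in \cref{rem:Assumption-ii-remark}: from \eqref{eq:htheta_help_bound} we have $2-\lambda_0\gamma_n\hbeta \geq \varepsilon\lambda_0 > 0$, and from \eqref{eq:theta_help_bound} we have $4-\gamma_n\hbeta-2\lambda_0 \geq \varepsilon > 0$. Combined with the uniform upper bound $\gamma_n\hbeta \leq 4-2\lambda_0-\varepsilon$, each of the numerators above is uniformly bounded in absolute value, and therefore each of the four ratios is bounded by a constant depending only on $\varepsilon$ and $\lambda_0$.

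There is no real obstacle; the only point requiring a brief comment is the ratio $\tfrac{2-\gamma_n\hbeta}{4-\gamma_n\hbeta-2\lambda_0}$, whose numerator may be negative when $\lambda_0 < 1$ permits $\gamma_n\hbeta > 2$. Boundedness still holds, since $|2-\gamma_n\hbeta|$ is bounded above by $\max(2, 2-2\lambda_0-\varepsilon)$ while the denominator is lower-bounded by $\varepsilon$.
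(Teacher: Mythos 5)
Your proof is correct and follows essentially the same route as the paper's: substitute the closed-form expressions from \eqref{eq:parameters_for_quadratic_mu} so the $\lambda_n^2$ factors cancel, then bound the denominators $2-\lambda_0\gamma_n\hbeta$ and $4-\gamma_n\hbeta-2\lambda_0$ away from zero via \eqref{eq:htheta_help_bound} and \eqref{eq:theta_help_bound}, and the numerators above via \labelcref{itm:assump-param-lambda-gamma}. The only cosmetic difference is your compactness framing and the slightly looser (but still valid) bound $|2-\gamma_n\hbeta|\leq 2$ used for the last ratio, which the paper obtains the same way.
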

\begin{proof}
We have $\frac{\ttheta_n}{\htheta_n}\geq 0$ by \cref{rem:lower-bound-on-coefficients} and
\begin{align*}
    \frac{\ttheta_n}{\htheta_n}&=\frac{\gamma_n\hbeta}{2-\lambda_0\gamma_n\hbeta}\leq\frac{\gamma_n\hbeta}{\lambda_0\varepsilon}\leq\frac{4-2\lambda_0-\varepsilon}{\lambda_0\varepsilon}\leq\frac{4}{\lambda_0\varepsilon}
\end{align*}
by \eqref{eq:htheta_help_bound}. 
Further, $\tfrac{\lambda_n^2}{\htheta_n}\geq 0$ by \cref{rem:lower-bound-on-coefficients} and
\begin{align*}
\frac{\lambda_n^2}{\htheta_n}=\frac{\lambda_0}{(2-\lambda_0\gamma_n\hbeta)}\leq\frac{1}{\varepsilon}
\end{align*}
by \eqref{eq:htheta_help_bound}. Further, by \eqref{eq:theta_help_bound},
\begin{align*}
\left|\frac{\btheta_n}{\theta_n}\right| =\frac{|1-\lambda_0|}{4-\gamma_n\hbeta-2\lambda_0}\leq\frac{|1-\lambda_0|}{\varepsilon}
\end{align*}
and, since $\gamma_n\hbeta\in(0,4)$ by \labelcref{itm:assump-param-lambda-gamma},
\begin{align*}
\left|\frac{(2-\gamma_n\hbeta)(\lambda_n+\mu_n)}{\theta_n}\right|&\leq\frac{2(\lambda_n+\mu_n)}{\theta_n}=\frac{2}{4-\gamma_n\hbeta-2\lambda_0}\leq \frac{2}{\varepsilon}.
\end{align*}
This completes the proof.


\end{proof}

\begin{theorem}\label{thm:convergence}
    Suppose that \cref{assum:monotone_inclusion,assum:parameters} hold. Let $\xs$ be an arbitrary point in $\zer{A+C}$, and the sequences \seqq{\el{n}}, \seqq{V_n}, and \seqq{\phi_n} be constructed in terms of the iterates obtained from \cref{alg:main}, as per \eqref{eq:ell_n}, \eqref{eq:LP-function-def}, and \eqref{eq:phi-def} respectively. Then the following hold:
\renewcommand\theenumi\labelenumi
\renewcommand{\labelenumi}{{(\roman{enumi})}}
\begin{enumerate}[noitemsep, ref=\Cref{thm:convergence}~\theenumi]
    \item the sequence \seqq{V_n} is convergent and $\elll{n}\leq V_{n+1}\leq\|x_0-\xs\|_M^2$;\label{itm:thm:Vn_convergent_elln_bounded}
    \item if $\seqq{\lambda_n}$ increasing and $\lambda_n\to\infty$ as $n\to\infty$, then
    \begin{align*}
        \norm{p_n-x_n+\alpha_n(x_n-p_{n-1}) +\tfrac{\gamma_n\hbeta\lambda_n^2}{\htheta_n}u_n - \tfrac{2\btheta_n}{\theta_n}v_n}_M^2\leq\frac{2\lambda_0\|x_0-\xs\|_M^2}{(4-\gamma_n\hbeta-2\lambda_0)\lambda_n^2};
    \end{align*}\label{itm:thm:norm_convergence}
    \item if $\epsilon_0>0$, then \seqq{\elll{n}} is summable;\label{itm:thm:elln_summable}
    \item if $\epsilon_0>0$, then $\lambda_n u_n\to 0$, $\lambda_nv_n\to 0$, and $x_{n+1}-x_n\to 0$ as $n\to\infty$;\label{itm:thm:xn_residual}
    \item if $\epsilon_1>0$, then \seqq{\phi_n} is summable;\label{itm:thm:phin_summable}
    \item if $\epsilon_1>0$ and $\hbeta>\beta$, then $\|y_n-p_n\|_M^2$ is summable;\label{itm:thm:yn-pn_summable}
    \item if $\epsilon_0,\epsilon_1>0$, $\seqq{\lambda_n}$ is bounded, and $p_n-x_n\to 0$, $y_n-p_n\to 0$, and $z_n-p_n\to 0$ as $n\to\infty$, then $p_n\weakto\xs$;\label{itm:thm:sequence_convergence}
    \item if $\epsilon_0,\epsilon_1>0$ and $\seqq{\lambda_n}$ is constant, then $p_n\weakto\xs$. \label{itm:thm:constant_lambda}
\end{enumerate}
\end{theorem}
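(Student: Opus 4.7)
The plan is to reduce this claim to a direct application of \labelcref{itm:thm:sequence_convergence}. The key first observation is that \labelcref{itm:assump-param-lambda} enforces $\seqq{\lambda_n}$ to be non-decreasing, so a constant sequence must satisfy $\lambda_n = \lambda_0$ for every $n\in\nat$. Substituting this identity into the parameter formulas in \eqref{eq:parameters_for_quadratic_mu} immediately collapses $\mu_n$, $\alpha_n$, and $\balpha_n$ to zero, so that the recursions of \cref{alg:main} simplify to
\begin{align*}
    y_n &= x_n + u_n,\\
    z_n &= x_n + \tfrac{\btheta_n\gamma_n\hbeta}{\htheta_n}u_n + v_n,\\
    x_{n+1} &= x_n + \lambda_0(p_n - z_n),
\end{align*}
where the coefficient $\tfrac{\btheta_n\gamma_n\hbeta}{\htheta_n}$ is bounded uniformly in $n$ thanks to the explicit formulas in \eqref{eq:parameters_for_quadratic_mu} together with the estimates in \cref{rem:lower-bound-on-coefficients} and \cref{lem:boundedness-of-coefficients}.

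Next I would feed these simplifications into \labelcref{itm:thm:xn_residual}, which applies because $\epsilon_0>0$, to obtain $\lambda_n u_n \to 0$, $\lambda_n v_n \to 0$, and $x_{n+1}-x_n\to 0$. Since $\lambda_n\equiv\lambda_0>0$, the first two limits give $u_n\to 0$ and $v_n\to 0$, while the third combined with $x_{n+1}-x_n=\lambda_0(p_n-z_n)$ forces $p_n-z_n\to 0$. Boundedness of the coefficient of $u_n$ in the $z_n$-update, together with $u_n,v_n\to 0$, gives $z_n-x_n\to 0$, and the simplified $y_n=x_n+u_n$ yields $y_n-x_n\to 0$. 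Chaining these convergences produces exactly the three residual limits $p_n-x_n\to 0$, $y_n-p_n\to 0$, and $z_n-p_n\to 0$ that \labelcref{itm:thm:sequence_convergence} demands.

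With $\seqq{\lambda_n}$ trivially bounded and both $\epsilon_0,\epsilon_1>0$ by hypothesis, all premises of \labelcref{itm:thm:sequence_convergence} are now in place, and that result directly delivers $p_n\weakto\xs$ for some $\xs\in\zer{A+C}$. The main obstacle in the proof is purely bookkeeping: one must verify carefully that the constant-$\lambda_n$ case collapses the algorithm enough for the three residual convergences to follow automatically from \labelcref{itm:thm:xn_residual}, and that the coefficient $\tfrac{\btheta_n\gamma_n\hbeta}{\htheta_n}$ does not explode. Since the genuine analytic content has already been discharged in the earlier items of \cref{thm:convergence}, I do not anticipate any substantive technical difficulty beyond these parameter manipulations.
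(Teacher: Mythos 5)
You prove only \labelcref{itm:thm:constant_lambda}, the last of the eight claims in the statement. Your plan explicitly treats the earlier items as ``already discharged,'' but \labelcref{itm:thm:Vn_convergent_elln_bounded}--\labelcref{itm:thm:sequence_convergence} are part of the statement being proved, and they carry essentially all of its analytic content: the monotone-convergence argument for \seqq{V_n} from the Lyapunov inequality \labelcref{itm:thm:LP-ineq-LP-ineq} via \cite[Lemma 5.31]{bauschke2017convex}; the telescoping arguments that give summability of \seqq{\elll{n}} and \seqq{\phi_n}; the passage from summability of \seqq{\elll{n}} to $\lambda_nu_n\to 0$, $\lambda_nv_n\to 0$, and $x_{n+1}-x_n\to 0$, which requires rewriting the norm in $\elll{n}$ via \cref{lemma:ell_n-identical-expressions} together with \cref{rem:lower-bound-on-coefficients} and \cref{lem:boundedness-of-coefficients}; the cocoercivity splitting behind \labelcref{itm:thm:yn-pn_summable}; and the weak-cluster-point/demiclosedness argument behind \labelcref{itm:thm:sequence_convergence} using \cite[Proposition~20.38]{bauschke2017convex} and \cite[Lemma~2.47]{bauschke2017convex}. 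None of this appears in your proposal, so as a proof of the stated theorem it has a genuine and large gap: seven of the eight items are simply assumed.

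Taken on its own as a proof of \labelcref{itm:thm:constant_lambda}, however, your argument is correct and follows the same overall route as the paper: reduce to \labelcref{itm:thm:sequence_convergence} by establishing the three residual limits, using \labelcref{itm:thm:xn_residual} to get $u_n\to 0$, $v_n\to 0$, $x_{n+1}-x_n\to 0$, and the $x$-update with constant $\lambda_n$ to get $p_n-z_n\to 0$. The one real difference is how $p_n-x_n\to 0$ is obtained. You read it off the collapsed update $z_n=x_n+\tfrac{\btheta_n\gamma_n\hbeta}{\htheta_n}u_n+v_n$: the coefficient is bounded by \eqref{eq:parameters_for_quadratic_mu} and \eqref{eq:htheta_help_bound}, so $z_n-x_n\to 0$ and hence $p_n-x_n=(p_n-z_n)+(z_n-x_n)\to 0$. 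The paper instead extracts $p_n-x_n\to 0$ from summability of \seqq{\elll{n}} through the norm bound in \labelcref{itm:thm:LP-ineq-elln}. Your variant is marginally more economical, since it needs only \labelcref{itm:thm:xn_residual} and the update equations rather than \labelcref{itm:thm:elln_summable}, and it is sound; but it does not repair the missing seven items.
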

\begin{proof}
We base our convergence results on 
\begin{align}
    V_{n+1} &+ 2\gamma_n(\lambda_n-\balpha_{n+1}\lambda_{n+1})\phi_n +\prt{1-\zeta_n}\elll{n-1}\leq V_n,\label{eq:Lyap_ineq}
\end{align}
from \cref{thm:LP-ineq}.

\labelcref{itm:thm:Vn_convergent_elln_bounded}. Recall that the sequences  \seqq{\elll{n}}, \seqq{V_n}, and \seqq{\phi_n} are non-negative by \cref{thm:LP-ineq}. Additionally, by \ref{itm:assump-param-zeta} and \cref{rem:lower-bound-on-coefficients} respectively, the quantities $1-\zeta_n$ and $\gamma_n(\lambda_n-\balpha_{n+1}\lambda_{n+1})$ are non-negative  for all $n\in\nat$; and thus, the quantity $2\gamma_n(\lambda_n-\balpha_{n+1}\lambda_{n+1})\phi_n +\prt{1-\zeta_n}\elll{n-1}$ is non-negative for all $n\in\nat$. Therefore, by \cite[Lemma 5.31]{bauschke2017convex} the sequence 
\seqq{V_n} converges and $V_{n+1}\leq V_n$ for all $n\in\mathbb{N}$ and since $\lambda_{n+1}\gamma_{n+1}\alpha_{n+1}\geq 0$ by \cref{assum:parameters} and \cref{rem:lower-bound-on-coefficients},
\begin{align}
    \elll{n}\leq V_{n+1} \leq V_n\leq\ldots\leq V_0=\|x_0-\xs\|_M^2.
    \label{eq:elln_lessthan_V0}
\end{align}

\labelcref{itm:thm:norm_convergence}. \labelcref{itm:thm:LP-ineq-elln} states that
\begin{align*}
    \frac{\theta_n}{2}\norm{p_n-x_n+\alpha_n(x_n-p_{n-1}) +\tfrac{\gamma_n\hbeta\lambda_n^2}{\htheta_n}u_n - \tfrac{2\btheta_n}{\theta_n}v_n}_M^2\leq\elll{n},
\end{align*}
where $\elll{n}\leq \|x_0-\xs\|_M^2$ by \labelcref{itm:thm:Vn_convergent_elln_bounded}. Inserting the definition of $\theta_n$ and rearranging gives the result.

\labelcref{itm:thm:elln_summable}. That $\epsilon_0>0$ implies that $1-\zeta_n>\epsilon_0>0$ by \labelcref{itm:assump-param-zeta} and a telescope summation of \eqref{eq:Lyap_ineq} gives summability of $\seqq{\elll{n}}$.

\labelcref{itm:thm:xn_residual}.
To show that $\seqq{\lambda_nu_n}$ and $\seqq{\lambda_nv_n}$ converge to 0, we note that due to \eqref{eq:bound_on_deviations}, the summability of \seqq{\elll{n}} implies summability of 
\begin{equation}\label{eq:deviations_summability}
    \seqq{\tfrac{\lambda_{n+1}+\mu_{n+1}}{\lambda_{n+1}^2}\lp\tfrac{\ttheta_{n+1}}{\htheta_{n+1}}\norm{\lambda_{n+1}u_{n+1}}_{M}^2+\tfrac{\htheta_{n+1}}{\theta_{n+1}}\norm{\lambda_{n+1}v_{n+1}}_{M}^2\rp}.
\end{equation}
Hence, as, for all $n\in\nat$, by \cref{rem:lower-bound-on-coefficients} and \cref{assum:parameters} the coefficients in the expression above are strictly positive, the sequences \seqq{\lambda_nu_n} and \seqq{\lambda_nv_n} must be convergent to zero.

Next, we show convergence to zero of \seqq{x_{n+1}-x_n}. Since \seqq{\elll{n}} is summable, \labelcref{itm:thm:LP-ineq-elln} implies that
\begin{align*}
    \prt{\tfrac{\theta_n}{2}\norm{p_n-x_n+\alpha_n(x_n-p_{n-1}) +\tfrac{\gamma_n\hbeta\lambda_n^2}{\htheta_n}u_n - \tfrac{2\btheta_n}{\theta_n}v_n}_M^2}_{n\in\nat}
\end{align*}
is summable. Using \cref{lemma:ell_n-identical-expressions} to replace the expression inside the norm above by \labelcref{itm:lemma2-iii} and taking the factor $\tfrac{1}{\lambda_n}$ out of the norm, we get
\begin{align*}
    \prt{\tfrac{\theta_n}{2\lambda_n^2}\norm{x_{n+1}-x_n+\tfrac{\ttheta_n}{\htheta_n}\lambda_nu_n+\tfrac{(2-\gamma_n\hbeta)(\lambda_n+\mu_n)}{\theta_n}\lambda_nv_n}_M^2}_{n\in\nat},
\end{align*}
which is a summable sequence too. Since, by \cref{rem:lower-bound-on-coefficients}, $\tfrac{\theta_n}{2\lambda_n^2}$ is lower bounded by a positive constant and the coefficients multiplying $\lambda_nu_n$ and $\lambda_nv_n$ are bounded by \cref{lem:boundedness-of-coefficients}, we conclude, since $\lambda_nu_n\to 0$ and $\lambda_nv_n\to 0$ as $n\to\infty$, that $x_{n+1}-x_n\to 0$ as $n\to\infty$.

\labelcref{itm:thm:phin_summable}. \cref{rem:lower-bound-on-coefficients} implies that $2\gamma_n(\lambda_n-\balpha_{n+1}\lambda_{n+1})\geq 2\epsilon_1>0$ and a telescope summation of \eqref{eq:Lyap_ineq} gives summability of $\seqq{\phi_n}$.

\labelcref{itm:thm:yn-pn_summable}. Let $\beta=0$. Then \labelcref{itm:thm:LP-ineq-phin} immediately gives the result due to summability of $\seqq{\phi_n}$. Let $\beta>0$. Then \labelcref{itm:thm:LP-ineq-phin} and $\hbeta>\beta$ imply that 
\begin{align*}
    \norm{\tfrac{2}{\hbeta}(Cy_n-C\xs)+M(p_n-y_n)}_{M^{-1}}^2\qquad{\hbox{and}}\qquad\norm{Cy_n-C\xs}_{M^{-1}}^2
\end{align*}
are summable. Since
\begin{align*}
    \|p_n-y_n\|_M^2&=\|\tfrac{2}{\hbeta}(Cy_n-C\xs)-\tfrac{2}{\hbeta}(Cy_n-C\xs)+M(p_n-y_n)\|_{M^{-1}}^2\\
    &\leq 2\norm{\tfrac{2}{\hbeta}Cy_n-C\xs+M(p_n-y_n)}_{M^{-1}}^2+2\norm{Cy_n-C\xs}_{M^{-1}}^2,
\end{align*}
we conclude that \seqq{\|p_n-y_n\|_M^2} is summable.

\labelcref{itm:thm:sequence_convergence}. We first show that $\|x_n-\xs\|_M^2$ converges. From \labelcref{itm:thm:Vn_convergent_elln_bounded}, we know that 
\begin{equation*}
    V_{n+1} \defeq \norm{x_{n+1}-\xs}_M^2 + 2\lambda_{n+1}\gamma_{n+1}\alpha_{n+1}\phi_n + \el{n},
\end{equation*}
converges. Since $\seqq{\lambda_n}$ is bounded so is $\lambda_{n+1}\gamma_{n+1}\alpha_{n+1}$ and by \labelcref{itm:thm:elln_summable,itm:thm:phin_summable} we conclude that $2\lambda_{n+1}\gamma_{n+1}\alpha_{n+1}\phi_n + \el{n}\to 0$ as $n\to\infty$. This implies that $\norm{x_{n+1}-\xs}_M^2$ converges. 

Now, since $\|p_n-x_n\|_M^2\to 0$ as $n\to\infty$ and $\|x_n-\xs\|\leq D$ for all $n\in\mathbb{N}$ and some $D\in(0,\infty)$, we conclude that
\begin{align*}
    \left|\|p_n-\xs\|_M^2-\|x_n-\xs\|_M^2\right|&=\left|\|p_n-x_n\|_M^2+2\langle p_n-x_n,x_n-\xs\rangle\right|\\
    &\leq\|p_n-x_n\|_M^2+2\left|\langle p_n-x_n,x_n-\xs\rangle\right|\\
    &\leq\|p_n-x_n\|_M^2+2\|p_n-x_n\|\|x_n-\xs\|\\
    &\leq\|p_n-x_n\|_M^2+2\|p_n-x_n\|D\to 0
\end{align*}
as $n\to\infty$. Therefore also $\|p_n-\xs\|^2$ converges and $p_n$ has weakly convergent subsequences. Let $(p_{n_k})_{k\in\nat}$ be one such subsequence with weak limit point $\bar{x}$ and construct corresponding subsequences $(y_{n_k})_{k\in\nat}$,  $(z_{n_k})_{k\in\nat}$, and $(\gamma_{n_k})_{k\in\nat}$. Now, Step~\ref{alg:main:FB-step} in \cref{alg:main} can equivalently be written as
\begin{align*}
    Mp_{n_k}+\gamma_{n_k}Ap_{n_k}\ni Mz_{n_k}-\gamma_{n_k}Cy_{n_k},
\end{align*}
which is equivalent to that
\begin{align*}
    Cp_{n_k}+Ap_{n_k}\ni \frac{1}{\gamma_{n_k}}M(z_{n_k}-p_{n_k})+(Cp_{n_k}-Cy_{n_k}).
\end{align*}
The right hand side converges to 0 as $k\to\infty$ since $z_{n_k}-p_{n_k}\to 0$ and $p_{n_k}-y_{n_k}\to 0$ as $k\to\infty$ and due to Lipschitz continuity of $C$, the uniform positive lower bound on $\gamma_n$ in \labelcref{itm:assump-param-lambda-gamma}, and boundedness of $M\in\posop{\PrimS}$. By weak-strong closedness of the maximal monotone operator $(A+C)$ (which is maximally monotone since $C$ has full domain) the limit point satisfies $0\in (A+C)\bar{x}$ by \cite[Proposition~20.38]{bauschke2017convex}. The weak convergence result now follows from \cite[Lemma~2.47]{bauschke2017convex}.

\labelcref{itm:thm:constant_lambda}. In view of \labelcref{itm:thm:sequence_convergence}, it is enough to show that $p_n-x_n\to 0$, $y_n-p_n\to 0$, and $z_n-p_n\to 0$ as $n\to\infty$. Since $\seqq{\lambda_n}$ is constant, $\mu_n=\frac{1}{\lambda_0}\lambda_n^2-\lambda_n=0$ and $\alpha_n=0$. Summability of $\elll{n}$ therefore implies through \labelcref{itm:thm:LP-ineq-elln} that
\begin{align*}
    \prt{\tfrac{\theta_n}{2}\norm{p_n-x_n +\tfrac{\gamma_n\hbeta\lambda_n^2}{\htheta_n}u_n - \tfrac{2\btheta_n}{\theta_n}v_n}_M^2}_{n\in\nat}
\end{align*}
is summable. Since $\theta_n$ is lower bounded by a positive constant due to \cref{rem:lower-bound-on-coefficients} and the coefficients in front of $u_n$ and $v_n$ are bounded due to \cref{lem:boundedness-of-coefficients}, we conclude, since $u_n\to 0$ and $v_n\to 0$ by \labelcref{itm:thm:xn_residual} and \cref{assum:parameters}, that $p_n-x_n\to 0$ as $n\to\infty$. From the $x_n$ update,
\begin{align*}
    x_{k+1}=x_k+\lambda_n(p_n-z_n),
\end{align*}
\labelcref{itm:thm:xn_residual}, and since $\seqq{\lambda_n}$ is constant, we conclude that $p_n-z_n\to 0$ as $n\to\infty$. Finally, from the $y_n$ update,
\begin{align*}
    y_n=x_n+u_n,
\end{align*}
and since $u_n\to 0$, we conclude that $y_n-x_n\to 0$, which implies that $y_n-p_n\to 0$ as $n\to\infty$. This concludes the proof.
\end{proof}


We could derive convergence properties for other quantities involved, yet we limit our discussion to these results as they are sufficient for our needs for the special cases. Notably, the conclusion in \labelcref{itm:thm:constant_lambda} aligns with a similar result presented in the authors' previous work \cite{nofob-inc}. This is due to $\mu_n=0$, causing our algorithm to reduce to the one presented in that work.

\section{A special case}
\label{sec:special_cases}

The safeguarding condition in \eqref{eq:bound_on_deviations} typically requires the evaluation of four norms and a scalar product. However, if the vectors inside the norms are parallel, the number of norm evaluations is reduced. This section introduces an algorithm wherein we choose $u_{n}$ and $v_{n}$ to ensure $y_n=z_n$ for all $n\in\mathbb{N}$ and such that the safeguarding condition reduces to a scalar condition that is readily verified offline. The algorithm we propose is as follows:
\begin{equation}
\begin{aligned}
		     y_n&=x_n+\tfrac{\lambda_n-\lambda_0}{\lambda_n}(y_{n-1}-x_n)+u_n\\
    p_n &= \prt{M+\gamma A}^{-1}\prt{M y_n - \gamma C y_n}\\
		    x_{n+1} &= x_n + \lambda_n(p_n - y_n) + (\lambda_n-\lambda_0)(y_{n-1}-p_{n-1})\\
      u_{n+1}&=\kappa_n\tfrac{4-\gamma\hbeta-2\lambda_0}{2}\lp p_n-x_n+\tfrac{\lambda_n-\lambda_0}{\lambda_n}(x_n-p_{n-1}) - \tfrac{2-\gamma\hbeta-2\lambda_0}{4-\gamma\hbeta-2\lambda_0} u_n\rp,
\end{aligned} 
\label{eq:alg-parallel-safeguarding}
\end{equation}
where $y_{-1}=p_{-1}=x_0$, $u_0=0$, and a constant step size $\gamma>0$ is used. With a constant step size, \labelcref{itm:assump-param-lambda} reduces to 
\begin{align}
   \lambda_n\leq \lambda_{n+1}\leq\lambda_n+\lambda_0-\frac{\epsilon_1}{\gamma}=(2+n)\lambda_0-(n+1)\frac{\epsilon_1}{\gamma},
   \label{eq:lambda-bounds}
\end{align}
which gives an increasing \seqq{\lambda_n} sequence that grows at most linearly in $n$. 

Prior to presenting the convergence results for this algorithm, we specify a particular form for the sequence $\seqq{\lambda_n}$. This form separates the growth in $n$ from the selection of $\lambda_0$.
\begin{assumption}
Let $\lambda_0>0$. Assume that $f:{\mathrm{dom}} f\to\reals$, with $\mathrm{int\,dom} f\supseteq \{x\in\reals:x\geq 0\}$, is differentiable (on the interior of its domain), concave, and non-decreasing, and satisfies $f(0)=1$ and $f^\prime(0)\in[0,1]$. Let, for all $n\in\nat$,
\begin{align*}
    \lambda_n = f(n)\lambda_0.
\end{align*}  
\label{assum:lambda-sequence}
\end{assumption}
\begin{proposition}
    Suppose that \cref{assum:lambda-sequence} holds, then \eqref{eq:lambda-bounds} and \labelcref{itm:assump-param-lambda} hold with $\epsilon_1=0$. Suppose in addition that $f^\prime(0)<1$, then there exists $\epsilon_1>0$ such that \eqref{eq:lambda-bounds} and \labelcref{itm:assump-param-lambda} hold.
\label{prop:lambda-assumption-holds}
\end{proposition}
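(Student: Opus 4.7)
The plan is to show that both claims reduce to verifying a single per-step increment bound on $\lambda_{n+1}-\lambda_n$, since with constant step size $\gamma$ the second part of \ref{itm:assump-param-lambda} simplifies to $\lambda_n - \lambda_{n-1} \leq \lambda_0 - \epsilon_1/\gamma$, which is exactly the per-step version of the right-hand inequality in \eqref{eq:lambda-bounds}. The left-hand inequality $\lambda_n \leq \lambda_{n+1}$ is immediate: writing $\lambda_{n+1} = f(n+1)\lambda_0$ and $\lambda_n = f(n)\lambda_0$, it follows from $f$ being non-decreasing on the interior of its domain, which contains $\{x \geq 0\}$.

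For the increment bound, I would apply the concavity of $f$ twice. First, concavity of $f$ on the interval $[n, n+1] \subset \mathrm{int\,dom}\, f$ implies that the chord slope is dominated by the derivative at the left endpoint: $f(n+1) - f(n) \leq f'(n)$. Second, concavity implies that $f'$ is non-increasing on the interior of its domain, so $f'(n) \leq f'(0)$. Combining these with the standing assumption $f'(0) \leq 1$ yields
\begin{equation*}
\lambda_{n+1} - \lambda_n = \lambda_0\bigl(f(n+1) - f(n)\bigr) \leq \lambda_0 f'(0) \leq \lambda_0,
\end{equation*}
which is the required bound with $\epsilon_1 = 0$. The fully summed form $\lambda_{n+1} \leq (2+n)\lambda_0 - (n+1)\epsilon_1/\gamma$ appearing in \eqref{eq:lambda-bounds} then follows by a trivial induction starting from $\lambda_0 = f(0)\lambda_0 = \lambda_0$ and applying the per-step bound $n+1$ times.

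For the second statement, the strict inequality $f'(0) < 1$ gives slack in the previous chain of inequalities. Specifically, I would choose any
\begin{equation*}
\epsilon_1 \in \bigl(0, \,\gamma \lambda_0 (1 - f'(0))\bigr],
\end{equation*}
which is a nonempty interval by the hypothesis. The same concavity argument then gives $\lambda_{n+1} - \lambda_n \leq \lambda_0 f'(0) \leq \lambda_0 - \epsilon_1/\gamma$, establishing \eqref{eq:lambda-bounds} and \ref{itm:assump-param-lambda} with this strictly positive $\epsilon_1$. There is no real obstacle here: the only substantive observation is the standard fact that concavity simultaneously bounds chord slopes from above by left-endpoint derivatives and makes $f'$ non-increasing, after which the estimates are purely arithmetic.
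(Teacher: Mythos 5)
Your proof is correct and follows essentially the same route as the paper: both arguments rest on concavity forcing $f'(x)\le f'(0)$ for all $x\ge 0$, combined with $f'(0)\le 1$ (resp.\ $f'(0)<1$), and your admissible interval for $\epsilon_1$ contains the paper's choice $\epsilon_1=(1-f'(0))\gamma\lambda_0$ as its right endpoint. The only difference is bookkeeping: you bound each increment $\lambda_{n+1}-\lambda_n$ via the chord-slope inequality and then sum, whereas the paper integrates $f'\le f'(0)$ over $[0,n]$ to get the aggregate bound $f(n)\le 1+f'(0)n$ directly; if anything, your per-step version matches the stated form of \labelcref{itm:assump-param-lambda} more literally.
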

\begin{proof}
That $f$ is non-decreasing trivially implies $\lambda_{n}\leq\lambda_{n+1}$. Concavity implies $f^{\prime}(x)\leq f^{\prime}(0)$ for all $x\geq 0$. Therefore
    \begin{align*}
        f(n) =1+\int_{0}^{n}f^\prime(x) dx\leq1+\int_{0}^{n}f^\prime(0) dx\leq 1+n
    \end{align*}
    and $\lambda_{n+1}\leq (2+n)\lambda_0$. Let $a:=f^{\prime}(0)\in[0,1)$, then
    \begin{align*}
        f(n) =1+\int_{0}^{n}f^\prime(x) dx\leq 1+\int_{0}^{n}f^\prime(0) dx=1+an=1+n-(1-a)n
    \end{align*}
    and with $\epsilon_1=(1-a)\gamma\lambda_0>0$, we get $\lambda_{n+1}\leq (2+n)\lambda_0-(n+1)\tfrac{\epsilon_1}{\gamma}$, as desired.
\end{proof}
\begin{example}
 Examples of functions $f$ that satisfy \cref{assum:lambda-sequence} for which an $\epsilon_1>0$ exists include functions that, for all $n\in\nat$, satisfy $f(n)=(1+n)^e$ with $e\in[0,1)$,  $f(n)=\tfrac{\log(n+2)}{\log(2)}$, and $f(n)=1$. The choice $f(n)=(1+n)$ requires that $\epsilon_1=0$.
 \label{ex:lambda-sequence-examples}
\end{example}

 We will use this construction of $\seqq{\lambda_n}$ throughout this section and specialize \cref{assum:parameters} as follows.
\begin{assumption}\label{assum:parameters-sc}
 Assume that $\varepsilon>0$, $\epsilon_0\geq 0$, $\lambda_0>0$, and that, for all $n\in\nat$, $\mu_n=\frac{1}{\lambda_0}\lambda_n^2-\lambda_n$ and the following hold: 
\renewcommand\theenumi\labelenumi
\renewcommand{\labelenumi}{{(\roman{enumi})}}
\begin{enumerate}[ref=\Cref{assum:parameters-sc}~\theenumi]
\item $0\leq\kappa_n^2\leq 1-\epsilon_0$; \label{itm:assump-param-zeta-sc}

\item $\varepsilon\leq\gamma\hbeta\leq 4-2\lambda_0-\varepsilon$;
\label{itm:assump-param-lambda-gamma-sc}

\item \seqq{\lambda_n} is given by \cref{assum:lambda-sequence}; 
\label{itm:assump-param-lambda-sc}

\item $\hbeta\geq\beta$. \label{itm:assump-param-hbeta-sc}

\end{enumerate}
\end{assumption}

The differences to \cref{assum:parameters} are that $\zeta_{n}$ in \labelcref{itm:assump-param-zeta} has been replaced with $\kappa_n^2$ in \labelcref{itm:assump-param-zeta-sc} and that
\labelcref{itm:assump-param-lambda} has been replaced with \labelcref{itm:assump-param-lambda-sc}. If, e.g., $\kappa_n^2\leq\zeta_{n+1}$, \cref{prop:lambda-assumption-holds} implies that  \cref{assum:parameters} holds if \cref{assum:parameters-sc} does.

We are ready to state our convergence results for the algorithm in \eqref{eq:alg-parallel-safeguarding}.
\begin{proposition}\label{prop:convergence-sc}
    Suppose that \cref{assum:monotone_inclusion,assum:parameters-sc} hold. 
    Then the following hold for \eqref{eq:alg-parallel-safeguarding}:
    \renewcommand\theenumi\labelenumi
    \renewcommand{\labelenumi}{{(\roman{enumi})}}
    \begin{enumerate}[noitemsep, ref=\Cref{prop:convergence-sc}~\theenumi]
    \item if $f(n)\to\infty$ as $n\to\infty$, then
    \begin{align*}
        \norm{p_n-x_n+\tfrac{\lambda_n-\lambda_0}{\lambda_n}(x_n-p_{n-1}) - \tfrac{2-\gamma\hbeta-2\lambda_0}{4-\gamma\hbeta-2\lambda_0}u_n}_M^2\leq\frac{2\|y_0-\xs\|_M^2}{(4-\gamma\hbeta-2\lambda_0)\lambda_0f(n)^2};
    \end{align*}\label{itm:prop:norm_convergence-sc}
    \item if $\hbeta>\beta$ and $f^{\prime}(0)<1$, then \seqq{\|p_n-y_n\|^2} is summable; \label{itm:prop:yn-pn_summable-sc}
    \item if $\seqq{\kappa_n^2}$ is upper bounded by a constant less than 1 and $f^{\prime}(0)=0$ (i.e., \seqq{\lambda_n} is constant), then $p_n\weakto\xs\in\zer{A+C}$. \label{itm:prop:constant_lambda-sc}
\end{enumerate}
\end{proposition}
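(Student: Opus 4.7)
The plan is to cast \eqref{eq:alg-parallel-safeguarding} as an instance of \cref{alg:main} that meets \cref{assum:parameters}, and then read off the three conclusions from \cref{thm:convergence}.

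First, I would identify the underlying parameter choices: $\gamma_n = \gamma$ constant, $\mu_n = \lambda_n^2/\lambda_0 - \lambda_n$ (so that $\alpha_n = \balpha_n = (\lambda_n-\lambda_0)/\lambda_n$), and the scalar relationship $v_n = \tfrac{(2-\gamma\hbeta)(\lambda_n+\mu_n)}{\htheta_n} u_n$, which simplifies to $v_n = \tfrac{2-\gamma\hbeta}{2-\lambda_0\gamma\hbeta} u_n$ via \eqref{eq:parameters_for_quadratic_mu}. A direct algebraic check using \eqref{eq:parameters_for_quadratic_mu} then yields the identity $\tfrac{\gamma\hbeta\lambda_n^2}{\htheta_n} u_n - \tfrac{2\btheta_n}{\theta_n} v_n = -\tfrac{2-\gamma\hbeta-2\lambda_0}{4-\gamma\hbeta-2\lambda_0} u_n$ and, more importantly, $z_n = y_n$, so that Step~\ref{alg:main:FB-step} reduces to a pure forward--backward step. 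With these substitutions, Steps~\ref{alg:main:y}, \ref{alg:main:FB-step}, and \ref{alg:main:x} of \cref{alg:main} collapse into the first three lines of \eqref{eq:alg-parallel-safeguarding}, and the vector inside the first norm of $\elll{n}$ becomes $w_n \defeq p_n - x_n + \tfrac{\lambda_n-\lambda_0}{\lambda_n}(x_n - p_{n-1}) - \tfrac{2-\gamma\hbeta-2\lambda_0}{4-\gamma\hbeta-2\lambda_0} u_n$, which is precisely the vector multiplying $\kappa_n\tfrac{4-\gamma\hbeta-2\lambda_0}{2}$ in the $u_{n+1}$-update of \eqref{eq:alg-parallel-safeguarding}.

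Next, I would verify that this parallel choice, combined with $v_{n+1} = \tfrac{2-\gamma\hbeta}{2-\lambda_0\gamma\hbeta} u_{n+1}$, satisfies the safeguarding condition \eqref{eq:bound_on_deviations}. The key simplification uses the identity $\gamma\hbeta(4-\gamma\hbeta-2\lambda_0) + (2-\gamma\hbeta)^2 = 2(2-\lambda_0\gamma\hbeta)$, which collapses the sum of the two squared-norm terms on the LHS into a single scalar multiple of $\|u_{n+1}\|_M^2$. Combined with the lower bound $\elll{n} \geq \tfrac{\theta_n}{2}\|w_n\|_M^2$ from \labelcref{itm:thm:LP-ineq-elln}, the safeguarding reduces to a scalar inequality that is satisfied for a suitable $\zeta_{n+1} \in [0, 1-\epsilon_0]$ whenever $\kappa_n^2 \leq 1-\epsilon_0$, given that the ratio $\lambda_{n+1}/\lambda_n = f(n+1)/f(n)$ is uniformly bounded under \cref{assum:lambda-sequence} (by concavity, $f(n+1) \leq f(n) + f^{\prime}(0) \leq f(n) + 1$). \cref{prop:lambda-assumption-holds} then provides $\epsilon_1 \geq 0$ in general and $\epsilon_1 > 0$ whenever $f^{\prime}(0) < 1$, so \cref{assum:parameters} is satisfied.

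With \cref{alg:main} applicable, I would extract the three conclusions. For (i), \labelcref{itm:thm:norm_convergence} supplies the bound; substituting $\lambda_n = f(n)\lambda_0$ together with $y_0 = x_0$ (from the initialization $y_{-1} = p_{-1} = x_0$, $u_0 = 0$, giving $y_0 = x_0$) yields the claimed rate. For (ii), \labelcref{itm:thm:yn-pn_summable} applies because $\hbeta > \beta$ is assumed and $f^{\prime}(0) < 1$ provides $\epsilon_1 > 0$ by \cref{prop:lambda-assumption-holds}. For (iii), \labelcref{itm:thm:constant_lambda} applies because $f^{\prime}(0) = 0$ together with concavity and $f(0) = 1$ forces $f \equiv 1$, making $\seqq{\lambda_n}$ constant, while a uniform upper bound on $\kappa_n^2$ strictly less than one provides $\epsilon_0 > 0$. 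The hardest step will be the second: carefully tracking the mismatched indexing (quantities at $n+1$ in the LHS of \eqref{eq:bound_on_deviations} versus at $n$ in the RHS), exploiting the parallelism $v_{n+1} \parallel u_{n+1}$ to achieve the scalar reduction, and choosing $\zeta_{n+1}$ to absorb the bounded $\lambda_{n+1}/\lambda_n$ factor. Once that is done, the three convergence claims follow immediately from \cref{thm:convergence}.
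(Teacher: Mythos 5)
Your overall plan is the same as the paper's: embed \eqref{eq:alg-parallel-safeguarding} into \cref{alg:main} by taking $\gamma_n=\gamma$, $\mu_n=\lambda_n^2/\lambda_0-\lambda_n$, and $v_n=\tfrac{2-\gamma\hbeta}{2-\lambda_0\gamma\hbeta}u_n$ (giving $z_n=y_n$ and the coefficient identity you state), collapse the safeguarding condition \eqref{eq:bound_on_deviations} to a scalar inequality via $\gamma\hbeta(4-\gamma\hbeta-2\lambda_0)+(2-\gamma\hbeta)^2=2(2-\lambda_0\gamma\hbeta)$, and then read off (i)--(iii) from \labelcref{itm:thm:norm_convergence}, \labelcref{itm:thm:yn-pn_summable}, and \labelcref{itm:thm:constant_lambda}, with \cref{prop:lambda-assumption-holds} supplying $\epsilon_1$. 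All of that matches the paper. The genuine gap sits in the step you yourself flag as the hardest. Writing $w_n$ for the direction vector in the $u_{n+1}$-update, your choices give
\begin{align*}
\left(\lambda_{n+1}+\mu_{n+1}\right)\lp\tfrac{\ttheta_{n+1}}{\htheta_{n+1}}\norm{u_{n+1}}_{M}^2+\tfrac{\htheta_{n+1}}{\theta_{n+1}}\norm{v_{n+1}}_{M}^2\rp
&=\frac{\lambda_{n+1}^2}{\lambda_0}\,\frac{2}{4-\gamma\hbeta-2\lambda_0}\norm{u_{n+1}}_M^2\\
&=\kappa_n^2\,\frac{\lambda_{n+1}^2}{\lambda_n^2}\,\frac{\theta_n}{2}\norm{w_n}_M^2,
\end{align*}
so, using $\elll{n}\geq\tfrac{\theta_n}{2}\norm{w_n}_M^2$, the scalar condition is $\kappa_n^2\lambda_{n+1}^2/\lambda_n^2\leq\zeta_{n+1}$, exactly as you indicate. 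Your resolution---``choosing $\zeta_{n+1}$ to absorb the bounded $\lambda_{n+1}/\lambda_n$ factor''---cannot work, because \labelcref{itm:assump-param-zeta} caps $\zeta_{n+1}$ at $1-\epsilon_0\leq1$: no factor exceeding one can ever be absorbed. Under \cref{assum:parameters-sc} you only know $\kappa_n^2\leq1-\epsilon_0$; taking $f(n)=1+n$ (admissible by \cref{assum:lambda-sequence} and \cref{ex:lambda-sequence-examples}, and satisfying the hypothesis of (i)) gives $\lambda_1/\lambda_0=2$, so $\kappa_0^2=0.9$ would force $\zeta_1\geq3.6$, which is inadmissible. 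Moreover, at $n=0$ one has $\mu_0=0$, hence $\elll{0}=\tfrac{\theta_0}{2}\norm{w_0}_M^2$ exactly, so the other terms in \eqref{eq:ell_n} provide no slack: for $w_0\neq0$ the condition $\kappa_0^2\lambda_1^2/\lambda_0^2\leq\zeta_1$ is necessary. Boundedness of the ratio is simply not the right ingredient; what you would need is $\kappa_n^2\lambda_{n+1}^2/\lambda_n^2\leq1-\epsilon_0$ itself, an additional hypothesis rather than a consequence of \cref{assum:parameters-sc}.

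For comparison, the paper's proof arrives at a ratio-free condition: at this point it identifies $\lambda_{n+1}+\mu_{n+1}$ with $\lambda_n^2/\lambda_0$ (index $n$, not $n+1$), after which the $\lambda$'s cancel against $\theta_n/2$, the safeguard reduces to $\kappa_n^2\leq\zeta_{n+1}$, and the choice $\zeta_{n+1}=\kappa_n^2$ finishes the argument (also delivering $\epsilon_0>0$ for (iii)). The only divergence between your derivation and the paper's is precisely this $n$ versus $n{+}1$ index in the quadratic prefactor---your version is the literal reading of \eqref{eq:bound_on_deviations}---but either way, your proposed fix does not close the argument. Part (iii) survives in your write-up, since a constant $\seqq{\lambda_n}$ makes the ratio equal to one, so your condition coincides with the paper's $\kappa_n^2\leq\zeta_{n+1}$; for (i) and (ii), however, the safeguarding verification, and hence the appeal to \cref{thm:convergence}, is not justified as written.
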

\begin{proof}
    We first show that the algorithm is a special case of \cref{alg:main}. First note that 
    $\gamma_n=\gamma$ implies that $\alpha_n=\balpha_n$ for all $n\in\mathbb{N}$. Let 
    \begin{align*}
    v_n&=\frac{2-\gamma_n\hbeta}{2-\lambda_0\gamma_n\hbeta}u_n
\end{align*}
for all $n\in\mathbb{N}$, which implies that
\begin{align}
    \frac{\btheta_n\gamma_n\hbeta}{\htheta_n}u_n+v_n=\frac{(1-\lambda_0)\gamma_n\hbeta}{2-\lambda_0\gamma_n\hbeta}u_n+\frac{2-\gamma_n\hbeta}{2-\lambda_0\gamma_n\hbeta}u_n=u_n
    \label{eq:un+vn_gives_un}
\end{align}
since $2-\lambda_0\gamma_n\hbeta>0$ by \eqref{eq:htheta_help_bound}. Let us show by induction that this implies $y_n=z_n$ for all $n\in\mathbb{N}$ in \cref{alg:main}.  Since $y_{-1}=z_{-1}=p_{-1}=x_0$ and $u_0=0$, we get $y_0=z_0$. Now, assume that $y_k=z_k$ for all $k\in\{-1,\ldots,n\}$, then, since $\alpha_n=\balpha_n$ and due to \eqref{eq:un+vn_gives_un},
\begin{align*}
        z_{n+1} &=y_n-x_n+\alpha_n(p_{n-1}-x_n)+\balpha_n(y_{n-1}-p_{n-1})+\tfrac{\btheta_n\gamma_n\hbeta}{\htheta_n}u_n+v_n\\
        &=y_n-x_n+\alpha_n(p_{n-1}-x_n)+u_n
        =y_{n+1}.
\end{align*}
Therefore, the $z_{n+1}$ update of \cref{alg:main} can be removed and all $z_n$ instances replaced by $y_n$ in \eqref{eq:alg-parallel-safeguarding}. Moreover, the $y_n$ and $x_n$ updates of \eqref{eq:alg-parallel-safeguarding} are obtained from the corresponding sequences in \cref{alg:main} by inserting $\alpha_n=\tfrac{\lambda_n-\lambda_0}{\lambda_n}$.

It remains to show that the $u_{n+1}$ update satisfies the safeguarding condition. We use \labelcref{itm:thm:LP-ineq-elln}, $\alpha_n=\tfrac{\lambda_n-\lambda_0}{\lambda_n}$, and the equality
\begin{equation}
\begin{aligned}
\frac{\gamma_n\hbeta\lambda_n^2}{\htheta_n}u_n - \frac{2\btheta_n}{\theta_n}v_n&=\left(\frac{\gamma_n\hbeta\lambda_n^2}{\htheta_n} - \frac{2\btheta_n}{\theta_n}\frac{2-\gamma_n\hbeta}{2-\lambda_0\gamma_n\hbeta}\right)u_n\\
&=\left(\frac{\lambda_0\gamma_n\hbeta}{2-\lambda_0\gamma_n\hbeta} - \frac{2(1-\lambda_0)}{4-\gamma_n\hbeta-2\lambda_0}\frac{2-\gamma_n\hbeta}{2-\lambda_0\gamma_n\hbeta}\right)u_n\\
&=\frac{\gamma_n\hbeta\lambda_0(4-\gamma_n\hbeta-2\lambda_0)-2(1-\lambda_0)(2-\gamma_n\hbeta)}{(2-\lambda_0\gamma_n\hbeta)(4-\gamma_n\hbeta-2\lambda_0)}u_n \\
&=\frac{\gamma_n\hbeta\lambda_0(2-\gamma_n\hbeta-2\lambda_0)-2(2-\gamma_n\hbeta-2\lambda_0)}{(2-\lambda_0\gamma_n\hbeta)(4-\gamma_n\hbeta-2\lambda_0)}u_n \\
&=\frac{(\gamma_n\hbeta\lambda_0-2)(2-\gamma_n\hbeta-2\lambda_0)}{(2-\lambda_0\gamma_n\hbeta)(4-\gamma_n\hbeta-2\lambda_0)}u_n \\
&=-\frac{(2-\gamma_n\hbeta-2\lambda_0)}{(4-\gamma_n\hbeta-2\lambda_0)}u_n,
\end{aligned}
\label{eq:un_vn_coefficient_equality}
\end{equation}
to conclude that
\begin{equation*}
            \begin{aligned}
                \elll{n} 
                &\geq\tfrac{\theta_n}{2}\norm{p_n-x_n+\alpha_n(x_n-p_{n-1}) +\tfrac{\gamma_n\hbeta\lambda_n^2}{\htheta_n}u_n - \tfrac{2\btheta_n}{\theta_n}v_n}_M^2\\
                &=\tfrac{\theta_n}{2}\norm{p_n-x_n+\tfrac{\lambda_n-\lambda_0}{\lambda_n}(x_n-p_{n-1}) -\tfrac{(2-\gamma_n\hbeta-2\lambda_0)}{(4-\gamma_n\hbeta-2\lambda_0)}u_n}_M^2.
            \end{aligned}
            \end{equation*}
Now, since $\lambda_{n+1}+\mu_{n+1}=\tfrac{\lambda_n^2}{\lambda_0}$, we conclude that if 
\begin{align*}
      &\tfrac{\lambda_n^2}{\lambda_0}\lp\tfrac{\ttheta_{n+1}}{\htheta_{n+1}}\norm{u_{n+1}}_{M}^2 + \tfrac{\htheta_{n+1}}{\theta_{n+1}}\norm{v_{n+1}}_{M}^2\rp\\
      &\qquad\qquad\qquad\leq \zeta_{n+1}\tfrac{\theta_n}{2}\norm{p_n-x_n+\tfrac{\lambda_n-\lambda_0}{\lambda_n}(x_n-p_{n-1}) -\tfrac{(2-\gamma_n\hbeta-2\lambda_0)}{(4-\gamma_n\hbeta-2\lambda_0)}u_n}_M^2,
\end{align*}
the safeguarding condition in \cref{alg:main} is satisfied. The vectors $u_{n+1}$ and $v_{n+1}$ are scalars times the quantity inside this norm. Therefore, the safeguarding condition reduces to the scalar condition
\begin{align*}
    \frac{\lambda_n^2}{\lambda_0}\kappa_n^2\frac{(4-\gamma\hbeta-2\lambda_0)^2}{4}\lp\frac{\ttheta_{n+1}}{\htheta_{n+1}} + \frac{\htheta_{n+1}}{\theta_{n+1}}\frac{(2-\gamma_n\hbeta)^2}{(2-\lambda_0\gamma_n\hbeta)^2}\rp \leq \zeta_{n+1}\frac{\theta_n}{2}.
\end{align*}
Inserting the quantities in \eqref{eq:parameters_for_quadratic_mu} and $\mu_n=\tfrac{1}{\lambda_0}\lambda_n^2-\lambda_n$ and multiplying by $\tfrac{2}{\theta_n}>0$ gives
\begin{align*}
    \kappa_n^2\frac{(4-\gamma\hbeta-2\lambda_0)}{2}\lp\frac{\gamma_n\hbeta}{2-\lambda_0\gamma_n\hbeta} + \frac{2-\lambda_0\gamma_n\hbeta}{(4-\gamma_n\hbeta-2\lambda_0)}\frac{(2-\gamma_n\hbeta)^2}{(2-\lambda_0\gamma_n\hbeta)^2}\rp \leq \zeta_{n+1}.
\end{align*}
The left-hand side satisfies
\begin{align*}
    &\kappa_n^2\frac{(4-\gamma\hbeta-2\lambda_0)}{2}\lp\frac{\gamma_n\hbeta}{2-\lambda_0\gamma_n\hbeta} + \frac{2-\lambda_0\gamma_n\hbeta}{(4-\gamma_n\hbeta-2\lambda_0)}\frac{(2-\gamma_n\hbeta)^2}{(2-\lambda_0\gamma_n\hbeta)^2}\rp\\
     &=\frac{(4-\gamma\hbeta-2\lambda_0)}{2}\frac{\kappa_n^2}{2-\lambda_0\gamma_n\hbeta}\frac{\lp\gamma_n\hbeta(4-\gamma_n\hbeta-2\lambda_0)+(2-\gamma_n\hbeta)^2\rp}{(4-\gamma_n\hbeta-2\lambda_0)}\\
      &=\frac{\kappa_n^2}{2(2-\lambda_0\gamma_n\hbeta)}\lp\gamma_n\hbeta(4-\gamma_n\hbeta-2\lambda_0)+4-4\gamma_n\hbeta +(\gamma_n\hbeta)^2\rp\\
      &=\frac{\kappa_n^2}{2(2-\lambda_0\gamma_n\hbeta)}\lp 4-2\gamma_n\hbeta\lambda_0\rp\\
      &=\kappa_n^2,
\end{align*}
leading to the safeguarding condition
\begin{align*}
    \kappa_n^2\leq\zeta_{n+1}
\end{align*}
which is satisfied by letting $\zeta_{n+1}=\kappa_n^2$. 

Using \cref{prop:lambda-assumption-holds} and the choice $\zeta_{n+1}=\kappa_n^2$, we conclude that \cref{assum:parameters} holds since \cref{assum:parameters-sc} does and we can use \cref{thm:convergence} to prove convergence.

That \labelcref{itm:prop:norm_convergence-sc} holds follows from \labelcref{itm:thm:norm_convergence} since $y_0=x_0$ and by updating the norm expression using \eqref{eq:un_vn_coefficient_equality}.

\labelcref{itm:prop:yn-pn_summable-sc} follows from \labelcref{itm:thm:yn-pn_summable} due to \cref{prop:lambda-assumption-holds} that ensures $\epsilon_1>0$.

\labelcref{itm:prop:constant_lambda-sc} follows from \labelcref{itm:thm:constant_lambda} due to \cref{prop:lambda-assumption-holds} and that $\kappa_n^2=\zeta_{n+1}$ is upper bounded by a constant less than 1, which implies $\epsilon_0>0$.

\end{proof}
\begin{remark}
The algorithm produces points that satisfy
    \begin{align*}
        (A+C)p_n\ni \tfrac{1}{\gamma}M(y_n-p_n)-C(y_n-p_n)
    \end{align*}
    and \labelcref{itm:prop:yn-pn_summable-sc} implies, if $f^\prime(0)<1$ and $\hbeta>\beta$, that $M(y_n-p_n)-C(y_n-p_n)\to 0$ as $n\to\infty$ due to boundedness of $M$ and Lipschitz continuity of $C$. Although $\seqq{p_n}$ may not converge, it satisfies the monotone inclusion \eqref{eq:monotone_inclusion} in the limit. If in addition $\|p_n-\xs\|_M$ converges, we can conclude that $p_n\weakto\xs\in\zer{A+C}$. 
\end{remark}

A special case of \eqref{eq:alg-parallel-safeguarding} that we will evaluate numerically in \cref{sec:numerical-experiments} is found by letting $\lambda_0=1$, and, for all $n\in\nat$, $\lambda_n=\lambda_0$, and $\kappa_n=\kappa\in(-1,1)$. Then \eqref{eq:alg-parallel-safeguarding} reduces to
\begin{equation*}
\begin{aligned}
		     y_n&=x_n+u_n\\
    p_n &= \prt{M+\gamma A}^{-1}\prt{M y_n - \gamma C y_n}\\
		    x_{n+1} &= x_n + p_n - y_n\\
      u_{n+1}&=\kappa\lp\tfrac{2-\gamma\hbeta}{2}\lp p_n-x_n\rp + \tfrac{\gamma\hbeta}{2} u_n\rp,
\end{aligned} 
\end{equation*}
which, since $x_{n+1}=p_n-u_n$, can be written as
\begin{equation}
\begin{aligned}
    p_n &= \lp\prt{M+\gamma A}^{-1}\circ\prt{M - \gamma C }\rp\lp p_{n-1}+u_n-u_{n-1}\rp\\
      u_{n+1}&=\kappa\lp\tfrac{2-\gamma\hbeta}{2}(p_n-p_{n-1}+u_{n-1}) + \tfrac{\gamma\hbeta}{2} u_n\rp.
\end{aligned}
\label{eq:alg-parallel-safeguarding-specific-lambda0-fixed-kappa}
\end{equation}
This algorithm is previewed in \cref{sec:special_case_preview}.
Since $\kappa_n=\kappa\in(-1,1)$ for all $n\in\nat$ and since $\seqq{\lambda_n}$ is constant, \labelcref{itm:prop:constant_lambda-sc} ensures that this algorithm produces a $p_n$-sequence that converges weakly to a solution.


\subsection{Alternative formulation}

We can eliminate the $x_n$ sequence in \eqref{eq:alg-parallel-safeguarding} and express the algorithm solely in terms of $y_n$, $p_n$, and $u_n$. The algorithm becomes
\begin{equation}
    \begin{aligned}
    p_n &= \prt{M+\gamma A}^{-1}\prt{M y_n - \gamma C y_n}\\
     y_{n+1} 
     &=y_{n} + \tfrac{\lambda_0\lambda_n}{\lambda_{n+1}}(p_n - y_n) +u_{n+1}-\tfrac{\lambda_n}{\lambda_{n+1}}u_n \\
     &\quad+\tfrac{\lambda_n-\lambda_0}{\lambda_{n+1}}\left((y_n- y_{n-1})+\lambda_0 (y_{n-1} - p_{n-1})\right)\\
           u_{n+1}
          &=\kappa_n\lp\tfrac{(4-\gamma\hbeta-2\lambda_0)}{2}\lp p_n-y_n-\tfrac{\lambda_n-\lambda_0}{\lambda_n}(p_{n-1}-y_{n-1})\rp +  u_n\rp
\end{aligned}  
\label{eq:alg-parallel-safeguarding-no-xn}
\end{equation}
with $y_{-1}=p_{-1}=x_0$ and $u_0=0$.
\begin{proposition}
The algorithms in \eqref{eq:alg-parallel-safeguarding} and \eqref{eq:alg-parallel-safeguarding-no-xn} produce the same $\seqq{y_n}$ and $\seqq{p_n}$ sequences, provided $p_{-1}=y_{-1}=y_0=x_0$.
\label{prop:algs-equal-with-without-xn}
\end{proposition}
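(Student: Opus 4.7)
The plan is to proceed by induction on $n$, showing simultaneously that $y_n$, $p_n$, and $u_n$ produced by \eqref{eq:alg-parallel-safeguarding} and \eqref{eq:alg-parallel-safeguarding-no-xn} coincide. Since $p_n$ is defined through the same resolvent expression in terms of $y_n$ in both algorithms, once the $y_n$'s match, so do the $p_n$'s. Hence the essential task reduces to verifying that, under the inductive hypothesis, the $y_{n+1}$ and $u_{n+1}$ updates of the two algorithms agree.

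The key algebraic step is to eliminate the auxiliary variable $x_n$ from \eqref{eq:alg-parallel-safeguarding}. Solving the $y_n$-update for $x_n$ yields the closed form
\begin{equation*}
   x_n = \tfrac{\lambda_n}{\lambda_0}(y_n - u_n) - \tfrac{\lambda_n-\lambda_0}{\lambda_0}\,y_{n-1},
\end{equation*}
expressing $x_n$ purely in terms of $y_n$, $y_{n-1}$, and $u_n$. Substituting this identity, together with the corresponding formula for $x_{n+1}$ obtained from the $x$-update in \eqref{eq:alg-parallel-safeguarding}, into the definitions of $y_{n+1}$ and $u_{n+1}$ in \eqref{eq:alg-parallel-safeguarding} rewrites both updates purely in the variables $y_n$, $y_{n-1}$, $p_n$, $p_{n-1}$, and $u_n$. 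For the $u_{n+1}$ reconciliation a convenient shortcut is that the $y_n$-update immediately gives $y_n - x_n + \tfrac{\lambda_n-\lambda_0}{\lambda_n}(x_n-y_{n-1}) = u_n$, so that the additive $u_n$ appearing inside the brackets of the $u_{n+1}$-update in \eqref{eq:alg-parallel-safeguarding-no-xn} absorbs exactly the $\tfrac{2-\gamma\hbeta-2\lambda_0}{4-\gamma\hbeta-2\lambda_0}u_n$ correction from \eqref{eq:alg-parallel-safeguarding}. For the $y_{n+1}$ reconciliation, one collects the coefficients of $y_n$, $y_{n-1}$, $p_n$, $p_{n-1}$, and $u_n$ and checks that they agree with those in \eqref{eq:alg-parallel-safeguarding-no-xn}.

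For the base case, the shared initialization $y_{-1}=p_{-1}=x_0$, $u_0=0$, together with the hypothesis $y_0=x_0$, is consistent with the $n=0$ line of \eqref{eq:alg-parallel-safeguarding} because the coefficient $\tfrac{\lambda_0-\lambda_0}{\lambda_0}$ vanishes; $p_0$ is then determined identically from $y_0$ in both algorithms and the induction runs. The main obstacle is purely bookkeeping---making sure all the coefficient collections involving $\lambda_n$, $\lambda_{n+1}$, $\lambda_0$, and the constants $2-\gamma\hbeta-2\lambda_0$ and $4-\gamma\hbeta-2\lambda_0$ line up correctly after substitution. No monotonicity, cocoercivity, or safeguarding argument is required; the statement is essentially a purely algebraic change of variables.
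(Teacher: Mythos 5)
Your proposal is correct and follows essentially the same route as the paper's proof: both eliminate $x_n$ via the identity $x_n=\tfrac{\lambda_n}{\lambda_0}\bigl(y_n-\tfrac{\lambda_n-\lambda_0}{\lambda_n}y_{n-1}-u_n\bigr)$ obtained from the $y_n$-update, substitute it into the $x_{n+1}$- and $u_{n+1}$-updates, and verify by coefficient collection that the resulting recursions coincide with \eqref{eq:alg-parallel-safeguarding-no-xn}. Your observation that the $+u_n$ term in \eqref{eq:alg-parallel-safeguarding-no-xn} absorbs the $\tfrac{2-\gamma\hbeta-2\lambda_0}{4-\gamma\hbeta-2\lambda_0}u_n$ correction is exactly the simplification the paper carries out, since $1-\tfrac{2-\gamma\hbeta-2\lambda_0}{4-\gamma\hbeta-2\lambda_0}=\tfrac{2}{4-\gamma\hbeta-2\lambda_0}$.
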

\begin{proof}
    We remove the $x_n$ sequence from \eqref{eq:alg-parallel-safeguarding} by inserting
    \begin{align*}
        x_n=\tfrac{\lambda_n}{\lambda_0}\left(y_n-\tfrac{\lambda_n-\lambda_0}{\lambda_n} y_{n-1}-u_n\right),
    \end{align*}
    that comes from the $y_n$ update, into the $x_{n+1}$ update. This gives $x_{n+1}$ update
\begin{align*}
		    \tfrac{\lambda_{n+1}}{\lambda_0}\left(y_{n+1}-\tfrac{\lambda_{n+1}-\lambda_0}{\lambda_{n+1}} y_{n}-u_{n+1}\right)
      &= \tfrac{\lambda_n}{\lambda_0}\left(y_n-\tfrac{\lambda_n-\lambda_0}{\lambda_n} y_{n-1}-u_n\right)\\
      &\quad + \lambda_n(p_n - y_n) + (\lambda_n-\lambda_0)(y_{n-1}-p_{n-1}).
\end{align*}  
Multiplying by $\tfrac{\lambda_0}{\lambda_{n+1}}$ gives
\begin{align*}
		    y_{n+1} 
      &= \tfrac{\lambda_{n+1}-\lambda_0}{\lambda_{n+1}}y_{n}+u_{n+1}+\tfrac{\lambda_n}{\lambda_{n+1}}\left(y_n-\tfrac{\lambda_n-\lambda_0}{\lambda_n} y_{n-1}-u_n\right)\\
      &\quad + \tfrac{\lambda_0}{\lambda_{n+1}}\lp\lambda_n(p_n - y_n) + (\lambda_n-\lambda_0)(y_{n-1}-p_{n-1})\rp\\
      &= y_n+u_{n+1}+\tfrac{\lambda_n}{\lambda_{n+1}}\left(-\tfrac{\lambda_0}{\lambda_{n}}y_{n}+y_n-\tfrac{\lambda_n-\lambda_0}{\lambda_n} y_{n-1}-u_n\right)\\
      &\quad + \tfrac{\lambda_0}{\lambda_{n+1}}\lp\lambda_n(p_n - y_n) + (\lambda_n-\lambda_0)(y_{n-1}-p_{n-1})\rp\\
      &= y_n+\tfrac{\lambda_n-\lambda_0}{\lambda_{n+1}}\left(y_n- y_{n-1}\right)+u_{n+1}-\tfrac{\lambda_n}{\lambda_{n+1}}u_n\\
      &\quad + \tfrac{\lambda_0}{\lambda_{n+1}}\lp\lambda_n(p_n - y_n) + (\lambda_n-\lambda_0)(y_{n-1}-p_{n-1})\rp\\
      &= y_n+ \tfrac{\lambda_0\lambda_n}{\lambda_{n+1}}\lp p_n - y_n\rp+u_{n+1}-\tfrac{\lambda_n}{\lambda_{n+1}}u_n\\
      &\quad +\tfrac{\lambda_n-\lambda_0}{\lambda_{n+1}}\lp\left(y_n- y_{n-1}\right) + \lambda_0(y_{n-1}-p_{n-1})\rp.
\end{align*}  
The $u_{n+1}$ update becomes
\begin{align*}
          u_{n+1}&=\kappa_n\tfrac{(4-\gamma\hbeta-2\lambda_0)}{2}\lp p_n-x_n+\tfrac{\lambda_n-\lambda_0}{\lambda_n}(x_n-p_{n-1}) - \tfrac{2-\gamma\hbeta-2\lambda_0}{4-\gamma\hbeta-2\lambda_0} u_n\rp\\
          &=\kappa_n\tfrac{(4-\gamma\hbeta-2\lambda_0)}{2}\lp p_n-\tfrac{\lambda_0}{\lambda_n}x_n-\tfrac{\lambda_n-\lambda_0}{\lambda_n}p_{n-1} - \tfrac{2-\gamma\hbeta-2\lambda_0}{4-\gamma\hbeta-2\lambda_0} u_n\rp\\
          &=\kappa_n\tfrac{(4-\gamma\hbeta-2\lambda_0)}{2}\lp p_n-\lp y_n-\tfrac{\lambda_n-\lambda_0}{\lambda_n} y_{n-1}-u_n\rp-\tfrac{\lambda_n-\lambda_0}{\lambda_n}p_{n-1} - \tfrac{2-\gamma\hbeta-2\lambda_0}{4-\gamma\hbeta-2\lambda_0} u_n\rp\\
          &=\kappa_n\tfrac{(4-\gamma\hbeta-2\lambda_0)}{2}\lp p_n-y_n-\tfrac{\lambda_n-\lambda_0}{\lambda_n}(p_{n-1}-y_{n-1}) + \tfrac{2}{4-\gamma\hbeta-2\lambda_0} u_n\rp\\
          &=\kappa_n\lp\tfrac{(4-\gamma\hbeta-2\lambda_0)}{2}\lp p_n-y_n-\tfrac{\lambda_n-\lambda_0}{\lambda_n}(p_{n-1}-y_{n-1})\rp +  u_n\rp.
\end{align*}
This concludes the proof.
\end{proof}

\subsection{Fixed-point residual convergence rate}

The convergent quantity in \labelcref{itm:prop:norm_convergence-sc} may be hard to interpret. In this section, we propose a special case of \eqref{eq:alg-parallel-safeguarding} and \eqref{eq:alg-parallel-safeguarding-no-xn} such that this quantity is the fixed-point residual, $p_n-y_n$, for the forward--backward mapping. This is achieved by letting $\kappa_n=\tfrac{\lambda_{n+1}-\lambda_0}{\lambda_{n+1}}$, which implies that 
\begin{align*}
    u_{n+1} = \tfrac{\lambda_{n+1}-\lambda_0}{\lambda_{n+1}}\tfrac{(4-\gamma\hbeta-2\lambda_0)}{2}\lp p_n-y_n\rp
\end{align*}
and that the algorithm becomes
\begin{equation}
    \begin{aligned}
    p_n &= \prt{M+\gamma A}^{-1}\prt{M y_n - \gamma C y_n}\\
     y_{n+1} 
     &=y_{n} + \lp\tfrac{\lambda_0\lambda_n}{\lambda_{n+1}}+\tfrac{\lambda_{n+1}-\lambda_0}{\lambda_{n+1}}\tfrac{(4-\gamma\hbeta-2\lambda_0)}{2}\rp(p_n - y_n)  \\
     &\quad+\tfrac{\lambda_n-\lambda_0}{\lambda_{n+1}}\lp(y_n- y_{n-1})+\tfrac{(4-\gamma\hbeta)}{2}(y_{n-1} - p_{n-1})\rp.
\end{aligned}  
\label{eq:alg-parallel-safeguarding-kappa-equals-alpha}
\end{equation}
This algorithm converges as per the following result.
\begin{proposition}\label{prop:convergence-tunable-FB}
    Suppose that \cref{assum:monotone_inclusion,assum:parameters-sc} hold.
    Then the following hold for \eqref{eq:alg-parallel-safeguarding-kappa-equals-alpha}:
    \begin{enumerate}[noitemsep, ref=\Cref{prop:convergence-sc}~\theenumi]
    \item if $f(n)\to\infty$ as $n\to\infty$, then
    \begin{align*}
        \norm{\tfrac{1}{\gamma}(p_n-y_n)}_M^2\leq\frac{2\|y_0-\xs\|_M^2}{\gamma^2(4-\gamma\hbeta-2\lambda_0)\lambda_0f(n)^2};
    \end{align*}
    \label{itm:thm:norm_convergence-tunable-FB}
    \item if $\hbeta>\beta$ and $f^{\prime}(0)<1$, then \seqq{\|p_n-y_n\|^2} is summable; \label{itm:prop:yn-pn_summable-tunable-FB}
    \item if \seqq{\lambda_n} is constant, the algorithm reduces to relaxed forward--backward splitting and $p_n\weakto\xs\in\zer{A+C}$. \label{itm:prop:constant_lambda-tunable-FB}
\end{enumerate}
\end{proposition}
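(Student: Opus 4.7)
The plan is to derive this proposition as a corollary of \cref{prop:convergence-sc} by showing that algorithm \eqref{eq:alg-parallel-safeguarding-kappa-equals-alpha} is exactly what results from specializing \eqref{eq:alg-parallel-safeguarding-no-xn} (equivalently, \eqref{eq:alg-parallel-safeguarding}) to $\kappa_n = \tfrac{\lambda_{n+1}-\lambda_0}{\lambda_{n+1}}$. First I would verify the prerequisite: since $\lambda_{n+1} \geq \lambda_0 > 0$ under \cref{assum:parameters-sc}, we have $\kappa_n \in [0,1)$, so \labelcref{itm:assump-param-zeta-sc} holds for some $\epsilon_0 \geq 0$ (with $\epsilon_0 > 0$ whenever $\sup_n \kappa_n < 1$, in particular when \seqq{\lambda_n} is constant).

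The key structural step is an induction establishing the closed form
\begin{equation*}
    u_n = \tfrac{\lambda_n - \lambda_0}{\lambda_n}\tfrac{4-\gamma\hbeta-2\lambda_0}{2}(p_{n-1} - y_{n-1})
\end{equation*}
for all $n \in \nat$. The base case holds because $u_0 = 0$ and $\lambda_0 - \lambda_0 = 0$. For the inductive step, substituting the hypothesis into the $u_{n+1}$ update of \eqref{eq:alg-parallel-safeguarding-no-xn} makes the two terms involving $(p_{n-1} - y_{n-1})$ cancel, leaving $u_{n+1} = \kappa_n \tfrac{4-\gamma\hbeta-2\lambda_0}{2}(p_n - y_n)$, which is precisely the desired form with $\kappa_n = \tfrac{\lambda_{n+1}-\lambda_0}{\lambda_{n+1}}$. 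Substituting this expression for $u_{n+1}$ and $u_n$ into the $y_{n+1}$ update of \eqref{eq:alg-parallel-safeguarding-no-xn} and collecting the $(p_{n-1} - y_{n-1})$-terms via the identity $\tfrac{4-\gamma\hbeta-2\lambda_0}{2} + \lambda_0 = \tfrac{4-\gamma\hbeta}{2}$ reproduces exactly \eqref{eq:alg-parallel-safeguarding-kappa-equals-alpha}, confirming the algorithm is the claimed special case.

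With this identification in hand, part (i) follows from \labelcref{itm:prop:norm_convergence-sc}: a short computation (of the same type used in the induction above) shows that under the closed form for $u_n$, the vector inside the norm in \labelcref{itm:prop:norm_convergence-sc} simplifies to $p_n - y_n$, so dividing the bound by $\gamma^2$ yields the stated inequality for $\|\tfrac{1}{\gamma}(p_n - y_n)\|_M^2$. Part (ii) is immediate from \labelcref{itm:prop:yn-pn_summable-sc} once the hypotheses $\hbeta > \beta$ and $f^\prime(0) < 1$ are transferred. For part (iii), a constant \seqq{\lambda_n} forces $\kappa_n \equiv 0$, so by the induction $u_n \equiv 0$; the $y_{n+1}$ update collapses to $y_{n+1} = y_n + \lambda_0(p_n - y_n)$, i.e., relaxed forward--backward splitting, and weak convergence $p_n \weakto \xs \in \zer{A+C}$ then follows from \labelcref{itm:prop:constant_lambda-sc} (with $\epsilon_0 > 0$ trivial since $\kappa_n^2 \equiv 0$).

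The main obstacle is purely algebraic bookkeeping: establishing the closed form for $u_n$ and verifying that substitution into \eqref{eq:alg-parallel-safeguarding-no-xn} yields \eqref{eq:alg-parallel-safeguarding-kappa-equals-alpha} and that the norm expression in \labelcref{itm:prop:norm_convergence-sc} reduces cleanly to $p_n - y_n$. No new analytic ideas are required beyond the already-established convergence machinery of \cref{prop:convergence-sc}.
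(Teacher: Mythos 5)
Your proposal is correct and follows essentially the same route as the paper: identify \eqref{eq:alg-parallel-safeguarding-kappa-equals-alpha} as the $\kappa_n=\tfrac{\lambda_{n+1}-\lambda_0}{\lambda_{n+1}}$ instance of \eqref{eq:alg-parallel-safeguarding-no-xn}, prove the closed form $u_n=\tfrac{\lambda_n-\lambda_0}{\lambda_n}\tfrac{4-\gamma\hbeta-2\lambda_0}{2}(p_{n-1}-y_{n-1})$ by induction, reduce the norm expression of \labelcref{itm:prop:norm_convergence-sc} to $p_n-y_n$, and then read off the three claims from \cref{prop:convergence-sc} (together with \cref{prop:algs-equal-with-without-xn}). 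The only detail left implicit is the verification of the residual identity \eqref{eq:residual_equality}, which requires substituting $x_n$ via the $y_n$ update of \eqref{eq:alg-parallel-safeguarding}, but this is exactly the "short computation" you indicate and matches the paper's argument.
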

\begin{proof}
The claims follow from \cref{prop:convergence-sc,prop:algs-equal-with-without-xn} by showing that, for all $n\in\mathbb{N}$: the choice $\kappa_n=\tfrac{\lambda_{n+1}-\lambda_0}{\lambda_{n+1}}$ in \eqref{eq:alg-parallel-safeguarding-no-xn} gives \eqref{eq:alg-parallel-safeguarding-kappa-equals-alpha}; by noting that $\kappa_n=\tfrac{\lambda_{n+1}-\lambda_0}{\lambda_{n+1}}$ satisfies \labelcref{itm:assump-param-zeta-sc} with $\epsilon_0>0$ if $f$ (and consequently \seqq{\lambda_n}) is bounded and with $\epsilon_0=0$ if $f$ (and consequently \seqq{\lambda_n}) is unbounded; and by showing that the expression inside the norm in \labelcref{itm:prop:norm_convergence-sc} satisfies
\begin{align}
p_n-x_n+\tfrac{\lambda_n-\lambda_0}{\lambda_n}(x_n-p_{n-1}) - \tfrac{2-\gamma\beta-2\lambda_0}{4-\gamma\beta-2\lambda_0}u_n = p_n-y_n.
\label{eq:residual_equality}
\end{align}


We will first show that the $u_{n+1}$ update in \eqref{eq:alg-parallel-safeguarding-no-xn} with $\kappa_n=\tfrac{\lambda_{n+1}-\lambda_0}{\lambda_{n+1}}$, i.e.,
\begin{align}
    u_{n+1}
          &=\tfrac{\lambda_{n+1}-\lambda_0}{\lambda_{n+1}}\lp\tfrac{(4-\gamma\hbeta-2\lambda_0)}{2}\lp p_n-y_n-\tfrac{\lambda_n-\lambda_0}{\lambda_n}(p_{n-1}-y_{n-1})\rp +  u_n\rp
          \label{eq:un+1-update}
\end{align}
implies that $u_n=\tfrac{\lambda_n-\lambda_0}{\lambda_n}\tfrac{(4-\gamma\hbeta-2\lambda_0)}{2}(p_{n-1}-y_{n-1})$ for all $n\in\mathbb{N}$. For $n=0$, since $p_{-1}=y_{-1}=u_0=0$, we get
\begin{align*}
    u_{1} = \tfrac{\lambda_1-\lambda_0}{\lambda_1}\tfrac{(4-\gamma\hbeta-2\lambda_0)}{2}(p_{0}-y_{0}).
\end{align*}
For $n\geq 1$, we use induction. Assume that $u_n=\tfrac{\lambda_n-\lambda_0}{\lambda_n}\tfrac{(4-\gamma\hbeta-2\lambda_0)}{2}(p_{n-1}-y_{n-1})$, then
\begin{align}
    u_{n+1}&=\tfrac{\lambda_{n+1}-\lambda_0}{\lambda_{n+1}}\tfrac{(4-\gamma\hbeta-2\lambda_0)}{2}\lp p_n-y_n\rp
     \label{eq:un+1-formula}
\end{align}
since the last two terms in \eqref{eq:un+1-update} cancel, which is what we wanted to show.

The $y_{n+1}$ update in \eqref{eq:alg-parallel-safeguarding-no-xn} with $u_{n+1}$ defined in \eqref{eq:un+1-formula} inserted satisfies
\begin{align*}
         y_{n+1}&=y_{n} + \tfrac{\lambda_0\lambda_n}{\lambda_{n+1}}(p_n - y_n) +u_{n+1}-\tfrac{\lambda_n}{\lambda_{n+1}}u_n \\
     &\quad+\tfrac{\lambda_n-\lambda_0}{\lambda_{n+1}}\left((y_n- y_{n-1})+\lambda_0 (y_{n-1} - p_{n-1})\right)\\
     &=y_{n} + \tfrac{\lambda_0\lambda_n}{\lambda_{n+1}}(p_n - y_n) +\tfrac{\lambda_{n+1}-\lambda_0}{\lambda_{n+1}}\tfrac{(4-\gamma\hbeta-2\lambda_0)}{2}\lp p_n-y_n\rp\\
     &\quad-\tfrac{\lambda_{n}-\lambda_0}{\lambda_{n+1}}\tfrac{(4-\gamma\hbeta-2\lambda_0)}{2}\lp p_{n-1}-y_{n-1}\rp \\
     &\quad+\tfrac{\lambda_n-\lambda_0}{\lambda_{n+1}}\left((y_n- y_{n-1})+\lambda_0 (y_{n-1} - p_{n-1})\right)\\
     &=y_{n} + \lp\tfrac{\lambda_0\lambda_n}{\lambda_{n+1}}+\tfrac{\lambda_{n+1}-\lambda_0}{\lambda_{n+1}}\tfrac{(4-\gamma\hbeta-2\lambda_0)}{2}\rp(p_n - y_n) \\
     &\quad+\tfrac{\lambda_n-\lambda_0}{\lambda_{n+1}}\left(y_n- y_{n-1}\right)
     +\tfrac{\lambda_n-\lambda_0}{\lambda_{n+1}}\left(\lambda_0 +\tfrac{(4-\gamma\hbeta-2\lambda_0)}{2}\right)(y_{n-1} - p_{n-1})\\
     &=y_{n} + \lp\tfrac{\lambda_0\lambda_n}{\lambda_{n+1}}+\tfrac{\lambda_{n+1}-\lambda_0}{\lambda_{n+1}}\tfrac{(4-\gamma\hbeta-2\lambda_0)}{2}\rp(p_n - y_n) \\
     &\quad+\tfrac{\lambda_n-\lambda_0}{\lambda_{n+1}}\left(\left(y_n- y_{n-1}\right)
     +\tfrac{(4-\gamma\hbeta)}{2}(y_{n-1} - p_{n-1})\right),
           \end{align*}
           which equals the $y_{n+1}$ update in \eqref{eq:alg-parallel-safeguarding-kappa-equals-alpha}.

           Finally, using the $y_n$ update equation in \eqref{eq:alg-parallel-safeguarding}, i.e, $\tfrac{\lambda_0}{\lambda_n}x_n=y_n-\tfrac{\lambda_n-\lambda_0}{\lambda_n}y_{n-1}-u_n$ and the $u_{n+1}$ definition in \eqref{eq:un+1-formula}, we conclude that
\begin{equation*}
\begin{aligned}
&p_n-x_n+\tfrac{\lambda_n-\lambda_0}{\lambda_n}(x_n-p_{n-1}) - \tfrac{2-\gamma\beta-2\lambda_0}{4-\gamma\beta-2\lambda_0}u_n\\
&\qquad\qquad=p_n-y_n+\tfrac{\lambda_n-\lambda_0}{\lambda_n}(y_{n-1}-p_{n-1}) + \left(1-\tfrac{2-\gamma\beta-2\lambda_0}{4-\gamma\beta-2\lambda_0}\right) u_n\\
     &\qquad\qquad=p_n-y_n+\tfrac{\lambda_n-\lambda_0}{\lambda_n}(y_{n-1}-p_{n-1}) + \tfrac{2}{4-\gamma\beta-2\lambda_0} u_n, \\
     &\qquad\qquad=p_n-y_n.
\end{aligned}
\end{equation*}
           This completes the proof.
\end{proof}


One of the special cases previewed in \cref{sec:special_case_preview} is obtained from \eqref{eq:alg-parallel-safeguarding-kappa-equals-alpha} by letting $\lambda_n=\left(1-\tfrac{\gamma\hbeta}{4}\right)^e(1+n)^{e}$ for all $n\in\nat$. The resulting algorithm is numerically evaluated in \cref{sec:numerical-experiments} and enjoys the following convergence properties.
\begin{corollary}\label{prop:convergence-tunable-FB-exp-e}
    Suppose that \cref{assum:monotone_inclusion,assum:parameters-sc} hold 
    with $\lambda_0=\left(1-\tfrac{\gamma\hbeta}{4}\right)^e$ 
    and  
     let $f(n)=(1+n)^{e}$ with $e\in[0,1]$. Then the following hold for \eqref{eq:alg-parallel-safeguarding-kappa-equals-alpha}:
    \renewcommand\theenumi\labelenumi
    \renewcommand{\labelenumi}{{(\roman{enumi})}}
    \begin{enumerate}[noitemsep, ref=\Cref{prop:convergence-tunable-FB-exp-e}~\theenumi]
    \item if $e>0$, then
       \begin{align*}
        \norm{\tfrac{1}{\gamma}(p_n-y_n)}_M^2\leq\frac{2\|y_0-\xs\|_M^2}{\gamma^2(4-\gamma\hbeta-2\lambda_0)\lambda_0(1+n)^{2e}};
    \end{align*}
    \label{itm:cor:exp-e:fp-resid_convergence}
    \item if $\hbeta>\beta$ and $e<1$, then \seqq{\|p_n-y_n\|^2} is summable;
    \label{itm:cor:exp-e:fp-resid_summable}
    \item if $e=0$, the algorithm reduces to forward--backward splitting and $p_n\weakto\xs\in\zer{A+C}$.
    \label{itm:cor:exp-e:sequence_convergence}
\end{enumerate}
\end{corollary}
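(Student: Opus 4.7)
The plan is to recognize \cref{prop:convergence-tunable-FB-exp-e} as a direct specialization of \cref{prop:convergence-tunable-FB}, once the concrete choices $f(n)=(1+n)^{e}$ and $\lambda_0=(1-\gamma\hbeta/4)^e$ are shown to satisfy the standing hypotheses. Concretely, I would first verify that $x\mapsto(1+x)^e$ meets \cref{assum:lambda-sequence}: it is differentiable on an open set containing $\{x\geq 0\}$; it is non-decreasing because $e\geq 0$; it is concave because the second derivative $e(e-1)(1+x)^{e-2}$ is non-positive for $e\in[0,1]$; it satisfies $f(0)=1$; and its derivative at the origin is $f'(0)=e\in[0,1]$. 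The endpoints are degenerate but compatible ($e=0$ gives the constant $1$, $e=1$ gives the affine function $1+x$). This matches \labelcref{itm:assump-param-lambda-sc} exactly; the remaining parts of \cref{assum:parameters-sc} are imposed by the hypothesis of the corollary itself, and the specific choice $\lambda_0=(1-\gamma\hbeta/4)^e$ is compatible with \labelcref{itm:assump-param-lambda-gamma-sc} (notably, $e=0$ recovers $\lambda_0=1$ as in the standard FB setting).

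With \cref{prop:convergence-tunable-FB} thereby applicable, I would deduce the three claims in turn. For \labelcref{itm:cor:exp-e:fp-resid_convergence}, the hypothesis $e>0$ ensures $f(n)=(1+n)^e\to\infty$, so \labelcref{itm:thm:norm_convergence-tunable-FB} yields the asserted bound after substituting $f(n)^2=(1+n)^{2e}$ into its right-hand side. For \labelcref{itm:cor:exp-e:fp-resid_summable}, the identity $f'(0)=e$ combined with $e<1$ gives $f'(0)<1$; together with $\hbeta>\beta$, \labelcref{itm:prop:yn-pn_summable-tunable-FB} delivers summability of $\seqq{\norm{p_n-y_n}^2}$. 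For \labelcref{itm:cor:exp-e:sequence_convergence}, setting $e=0$ collapses $f$ to the constant $1$ and $\lambda_0$ to $1$, so $\lambda_n\equiv 1$, and \labelcref{itm:prop:constant_lambda-tunable-FB} furnishes both the reduction to relaxed forward--backward splitting (with unit relaxation) and the weak convergence $p_n\weakto\xs\in\zer{A+C}$.

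No substantive obstacle is expected; the entire argument is bookkeeping that matches the concrete $f$ and $\lambda_0$ to the hypotheses of the parent proposition and then reads off each conclusion. The only point warranting care is the algebraic verification of concavity of $x\mapsto(1+x)^e$ on $[0,1]$, and the observation that the boundary case $e=0$ collapses gracefully to the standard forward--backward iteration (consistent with the $\lambda_0=1$ normalization used earlier in \eqref{eq:alg-parallel-safeguarding-specific-lambda0-fixed-kappa}), whereas $e=1$ corresponds to the linearly growing sequence $\lambda_n=(1+n)(1-\gamma\hbeta/4)$ that saturates the permissible growth rate discussed in \cref{rem:Assumption-ii-remark}.
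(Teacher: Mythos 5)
Your proposal is correct and matches the paper's own treatment: the paper states this result as a corollary of \cref{prop:convergence-tunable-FB} with no separate proof, relying exactly on the observation (already recorded in \cref{ex:lambda-sequence-examples}) that $f(n)=(1+n)^e$ with $e\in[0,1]$ satisfies \cref{assum:lambda-sequence} with $f^\prime(0)=e$, so each item follows by substitution into the parent proposition. Your explicit verification of concavity, the endpoint cases, and the $e=0$ collapse to $\lambda_n\equiv 1$ is precisely the bookkeeping the paper leaves implicit.
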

\labelcref{itm:cor:exp-e:fp-resid_convergence} states that \seqq{\|p_n-y_n\|^2} converges as $\mathcal{O}\lp\tfrac{1}{n^{2e}}\rp$. When $\hbeta>\beta$ and $e<1$. \labelcref{itm:cor:exp-e:fp-resid_summable} gives that \seqq{\|p_n-y_n\|^2} converges as $\mathcal{O}\lp\tfrac{1}{n}\rp$ due to its summability. This gives a combined $\mathcal{O}\lp\min\lp\tfrac{1}{n},\tfrac{1}{n^{2e}}\rp\rp$ convergence rate and implies tunability of the convergence rate by selecting $e\in[0,1]$. Letting $e=1$ implies that our algorithm, as we will see in \cref{sec:accelerated_ppm_Halpern}, reduces to the accelerated proximal point method and the Halpern iteration that converge as $O\lp\tfrac{1}{n^2}\rp$.

\subsubsection{Accelerated proximal point method and Halpern iteration}
\label{sec:accelerated_ppm_Halpern}

Letting $f(n)=1+n$ and $\lambda_0=\left(1-\tfrac{\gamma\hbeta}{4}\right)$ to get $\lambda_n=\left(1-\tfrac{\gamma\hbeta}{4}\right)(1+n)$, we get that the $y_{n+1}$ update of \eqref{eq:alg-parallel-safeguarding-kappa-equals-alpha} satisfies
\begin{align*}
         y_{n+1}
     &=y_{n} + \lp\tfrac{\lambda_0\lambda_n}{\lambda_{n+1}}+\tfrac{\lambda_{n+1}-\lambda_0}{\lambda_{n+1}}\tfrac{(4-\gamma\hbeta-2\lambda_0)}{2}\rp(p_n - y_n) \\
     &\quad+\tfrac{\lambda_n-\lambda_0}{\lambda_{n+1}}\left(\left(y_n- y_{n-1}\right)
     +\tfrac{(4-\gamma\hbeta)}{2}(y_{n-1} - p_{n-1})\right)\\
     &=y_{n} + \left(1-\tfrac{\gamma\hbeta}{4}\right)\lp\tfrac{n+1}{n+2}+\tfrac{n+1}{n+2}\rp(p_n - y_n) \\
     &\quad+\tfrac{n}{n+2}\left(\left(y_n- y_{n-1}\right)
     +\tfrac{(4-\gamma\hbeta)}{2}(y_{n-1} - p_{n-1})\right)\\
      &=y_{n} + \tfrac{4-\gamma\hbeta}{2}\tfrac{n+1}{n+2}(p_n - y_n) \\
     &\quad+\tfrac{n}{n+2}\left(\left(y_n- y_{n-1}\right)
     +\tfrac{(4-\gamma\hbeta)}{2}(y_{n-1} - p_{n-1})\right)\\
          &= \tfrac{\gamma\hbeta(1+n)}{4+2n}y_n + \tfrac{n(2-\gamma\hbeta)}{4+2n}y_{n-1} + \tfrac{(1+n)(4-\gamma\hbeta)}{4+2n}p_n - \tfrac{n(4-\gamma\hbeta)}{4+2n}p_{n-1}
           \end{align*}
and algorithm \eqref{eq:alg-parallel-safeguarding-kappa-equals-alpha} becomes
\begin{equation}
\begin{aligned}
    p_n &= \prt{M+\gamma A}^{-1}\prt{M - \gamma C}y_n,\\
    y_{n+1} &= \tfrac{\gamma\hbeta(1+n)}{4+2n}y_n + \tfrac{n(2-\gamma\hbeta)}{4+2n}y_{n-1} + \tfrac{(1+n)(4-\gamma\hbeta)}{4+2n}p_n - \tfrac{n(4-\gamma\hbeta)}{4+2n}p_{n-1}.
\end{aligned}
\label{eq:accelerated_fb}
\end{equation}
From \cref{prop:convergence-tunable-FB-exp-e}, we conclude since $e=1$ and by letting $\beta=\hbeta$ that this algorithm converges as
\begin{align*}
        \norm{\tfrac{1}{\gamma}(p_n-y_n)}_M^2\leq\frac{16\|y_0-\xs\|_M^2}{\gamma^2\left(4-\gamma\beta\right)^2(1+n)^{2}}.
    \end{align*}
By letting $C=0$ and consequently $\beta=0$, we arrive at the accelerated proximal point method in \cite{kim2021accelerated} and the $O(\tfrac{1}{n^2})$ convergence rate results found in \cite{kim2021accelerated} is recovered by \cref{prop:convergence-tunable-FB-exp-e}.

If we let $A=0$, $\hbeta=\beta$, and $\gamma\beta=2$, the forward--backward mapping in \eqref{eq:accelerated_fb} satisfies
\begin{align*}
    p_n=(M-\tfrac{2}{\beta} C)y_n
\end{align*}
where $(M-\tfrac{2}{\beta} C):=N$ is nonexpansive in the $\|\cdot\|_M$ norm. This implies that the algorithm aims at solving the nonexpansive fixed-point equation $y=Ny$ by iterating
\begin{align*}
    p_n&=Ny_n\\
     y_{n+1} 
     &= \tfrac{1+n}{2+n}y_n + \tfrac{1+n}{2+n}p_n - \tfrac{n}{2+n}p_{n-1},
\end{align*}
which is the Halpern iteration studied in \cite{lieder2021convergence}. This is seen by recursively inserting $y_n$ into the $y_{n+1}$ update to get
\begin{align*}
    y_{n+1} = \tfrac{1}{n+2}y_{0} + \tfrac{n+1}{n+2}Ny_n,
\end{align*}
which is the formulation used in \cite{lieder2021convergence}.
From \cref{prop:convergence-tunable-FB-exp-e}, we conclude that this iteration converges as
\begin{align*}
        \norm{p_n-y_n}_M^2\leq\frac{4\|y_0-\xs\|_M^2}{(1+n)^{2}},
    \end{align*}
    which recovers the convergence result in \cite{lieder2021convergence}. Interestingly, although the convergence rate is optimized by this choice of $\lambda_n$, it does not perform very well in practice. Other choices of $\seqq{\lambda_n}$ with slower rate guarantees can give significantly better practical performance as demonstrated in \cref{sec:numerical-experiments}.

\section{Numerical examples}
\label{sec:numerical-experiments}

In this section, we apply our proposed algorithms on the problem $0\in Az$, where $z=(x,y)$ and
\begin{align*}
    Az=\begin{bmatrix}0& -1\\1 & 0\end{bmatrix}z=(-y,x)
\end{align*}
for all $z\in\reals^2$. The operator $A:\reals^2\to\reals^2$ is skew-symmetric and the monotone inclusion problem $0\in Az$ can be interpreted as an optimality condition for the minimax problem 
\begin{align*}
    \max_{y\in\reals}\min_{x\in\reals} xy
\end{align*}
with unique solution $x=y=0$. We will in particular evaluate the algorithm in \eqref{eq:alg-parallel-safeguarding-specific-lambda0-fixed-kappa} with different choices of $\kappa\in(-1,1)$ and the algorithm in \eqref{eq:alg-parallel-safeguarding-kappa-equals-alpha} with  
\begin{align}
    \lambda_n=\lp 1-\tfrac{\gamma\hbeta}{4}\rp^e(1+n)^e
    \label{eq:lambda_n-exp-e}
\end{align} 
for all $n\in\nat$ and $e\in[0,1]$. According to \cref{prop:convergence-sc} and \cref{prop:convergence-tunable-FB}, \eqref{eq:alg-parallel-safeguarding-specific-lambda0-fixed-kappa} with $\kappa\in(-1,1)$ and \eqref{eq:alg-parallel-safeguarding-kappa-equals-alpha} with $\lambda_n$ in \eqref{eq:lambda_n-exp-e} and $e=0$ (corresponding to the standard FB method) converge weakly to a solution of the inclusion problem. As per \cref{prop:convergence-tunable-FB-exp-e}, \eqref{eq:alg-parallel-safeguarding-kappa-equals-alpha} with $\lambda_n$ in \eqref{eq:lambda_n-exp-e} and $e\in(0,1]$ converges in squared norm of the fixed point residual as $O\lp\tfrac{1}{n^{2e}}\rp$ and when $e<1$ and $\hbeta>\beta$, it does so with a rate of $O\lp\tfrac{1}{n}\rp$. 

For all our experiments, parameters $\gamma=0.1$ and $\hbeta=0.001$ (which is feasible since $C=0$ and therefore $\beta=0$) are used, and starting points  $y_{-1}=p_{-1}=y_0=(3,3)$ and $x_{-1}=p_{-1}=x_0=(3,3)$ for \eqref{eq:alg-parallel-safeguarding-kappa-equals-alpha} and \eqref{eq:alg-parallel-safeguarding-specific-lambda0-fixed-kappa} respectively. 

In \cref{fig:numerics_FB_tunable_rate,tab:numerics_FB_tunable_rate}
we report numerical results for the algorithm in \eqref{eq:alg-parallel-safeguarding-kappa-equals-alpha} with $\lambda_n$ in \eqref{eq:lambda_n-exp-e} and $e\in\{0,0.1,\ldots,1\}$ and $M=\Id$. The choice $e=0$ gives standard forward--backward splitting and $e=1$ gives the accelerated proximal point method in \cite{kim2021accelerated}. The other choices of $e$ gives rise to new algorithms. The figure shows that the distance to the unique solution behaves over-damped for small values of $e$ and under-damped for large values of $e$. There is a sweet spot in the middle that has the right level of damping and performs significantly better than the previously known special cases with $e=0$ and $e=1$, at least for this problem.
\begin{figure}
\centering
\begin{tikzpicture}
    \node at (0,0) {\includegraphics[width=9cm]{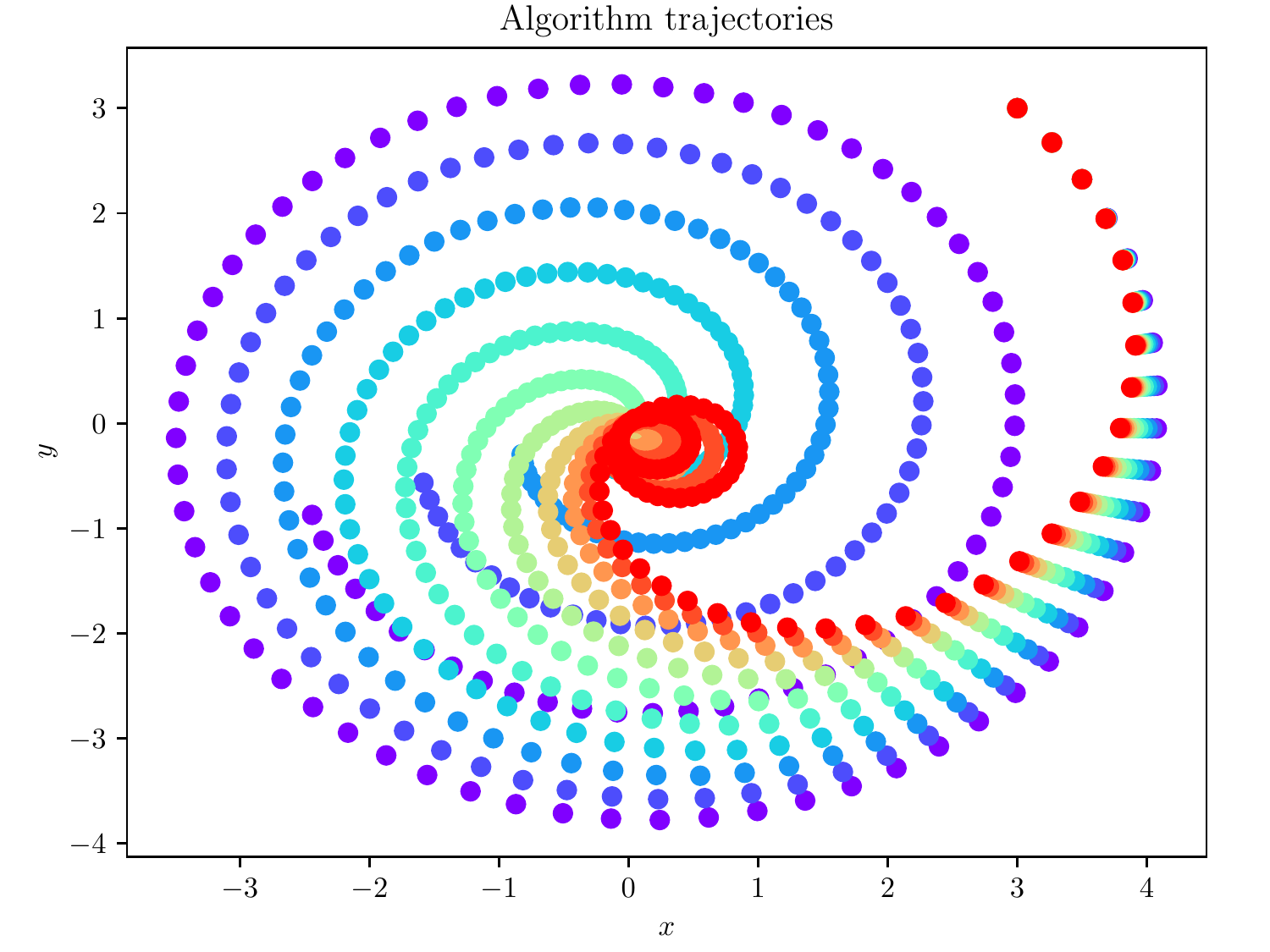}};
    \node at (0,-6.8) {\includegraphics[width=9cm]{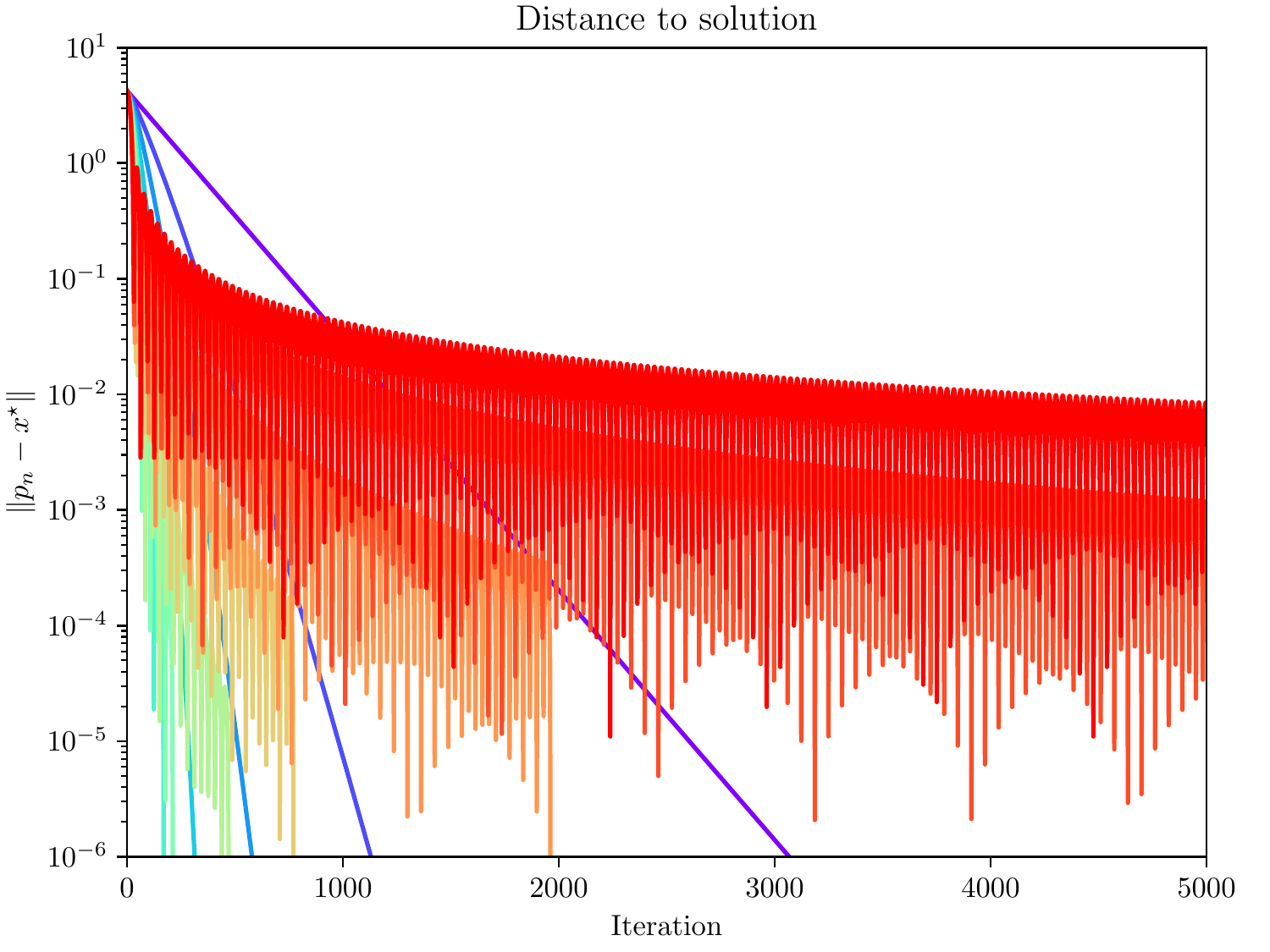}};

    \begin{scope}[yshift=-0.25cm,xshift=4.5cm]
       \xdef\perrow{4}
    \foreach \r in {1,...,11}{
                \xdef\temp{\noexpand\draw[color=matplotlibcolor\r of11,ultra thick] (0,-0.5*\r)--(0.5,-0.5*\r);}
                \temp
                \node[right] at (0.5,-0.5*\r) {$e=\;$\pgfmathparse{(\r-1)/10}\pgfmathprintnumber[fixed,precision=2]{\pgfmathresult}};
    }
 
    \draw (-0.2,-0.2) rectangle (1.8,-5.8);
    \end{scope}

 
\end{tikzpicture}
\caption{We evaluate the algorithm in \eqref{eq:alg-parallel-safeguarding-kappa-equals-alpha} with $\lambda_n$ in \eqref{eq:lambda_n-exp-e} with the different choices of $e$ specified in the legend to the right. The upper figure shows the 100 first $p_n$-iterates for the different $e$ and the lower figure shows the distance to the unique solution. The algorithm with $e=0$ is standard forward--backward splitting (here, only the backward part is used) and the algorithm with $e=1$ is the accelerated proximal point in \cite{kim2021accelerated}. For small $e$, we get a strictly decreasing distance to solution, while large $e$ gives an oscillatory behavior. Our new algorithms with $e$ chosen around the middle of the allowed range strike a good balance and achieves superior performance compared to the previously known methods.}
\label{fig:numerics_FB_tunable_rate}
\end{figure}

\begin{table}
	\begin{center}
\caption{We report the number of iterations to reach accuracy $\|p_n-\xs\|\leq 10^{-6}$ for the algorithm in \eqref{eq:alg-parallel-safeguarding-kappa-equals-alpha} with $\lambda_n$ in \eqref{eq:lambda_n-exp-e} and different choices of $e$. The theory predicts a $O\lp 1/n^{2e}\rp$ convergence rate for the squared fixed point residual norm. Although $e=1$ (which gives the accelerated proximal point method in \cite{kim2021accelerated}) gives the best theoretical rate, it performs the worst. Standard forward--backward splitting is obtained with $e=0$. We find that values of $e$ in between can perform considerably better than these previously known algorithms.}
 \label{tab:numerics_FB_tunable_rate}
		{\renewcommand{\arraystretch}{1.3}
			\begin{tabular}{cc|cc|cc|cc}
\specialrule{2pt}{1pt}{1pt}
\textbf{$e$} & \textbf{\# iter.} &\textbf{$e$} & \textbf{\# iter.}&\textbf{$e$} & \textbf{\# iter.}&\textbf{$e$} & \textbf{\# iter.} \\ 
\hline
0.0 & 3068 & 
0.1 & 1131 & 
0.2 & 580 &
0.3 & 314 \\ 
0.4 & 170 & 
0.5 & 212 & 
0.6 & 471 & 
0.7 & 771 \\ 
0.8 & 1961 & 
0.9 & 10625 & 
1.0 & 21213167 \\ 
\specialrule{2pt}{1pt}{1pt}
\end{tabular}
		}
\end{center}
\end{table}

In \cref{fig:numerics_fixed-kappa-full-range,tab:numerics_fixed-kappa-full-range}, we report numerical results for the algorithm in \eqref{eq:alg-parallel-safeguarding-specific-lambda0-fixed-kappa} with $\kappa\in\{-0.9,-0.8,\ldots,0.9\}$ and $M=\Id$. The theory predicts sequence convergence towards a solution of the problem for all $\kappa\in(-1,1)$. 
The choice $\kappa=0$ gives rise to standard forward--backward splitting and all other values of $\kappa$ define new algorithms. The figure reveals that the performance is best for $\kappa\in[0.8,0.9]$, significantly better than standard FB splitting with $\kappa=0$.

\begin{figure}
\centering
\begin{tikzpicture}
    \node at (0,0) {\includegraphics[width=9cm]{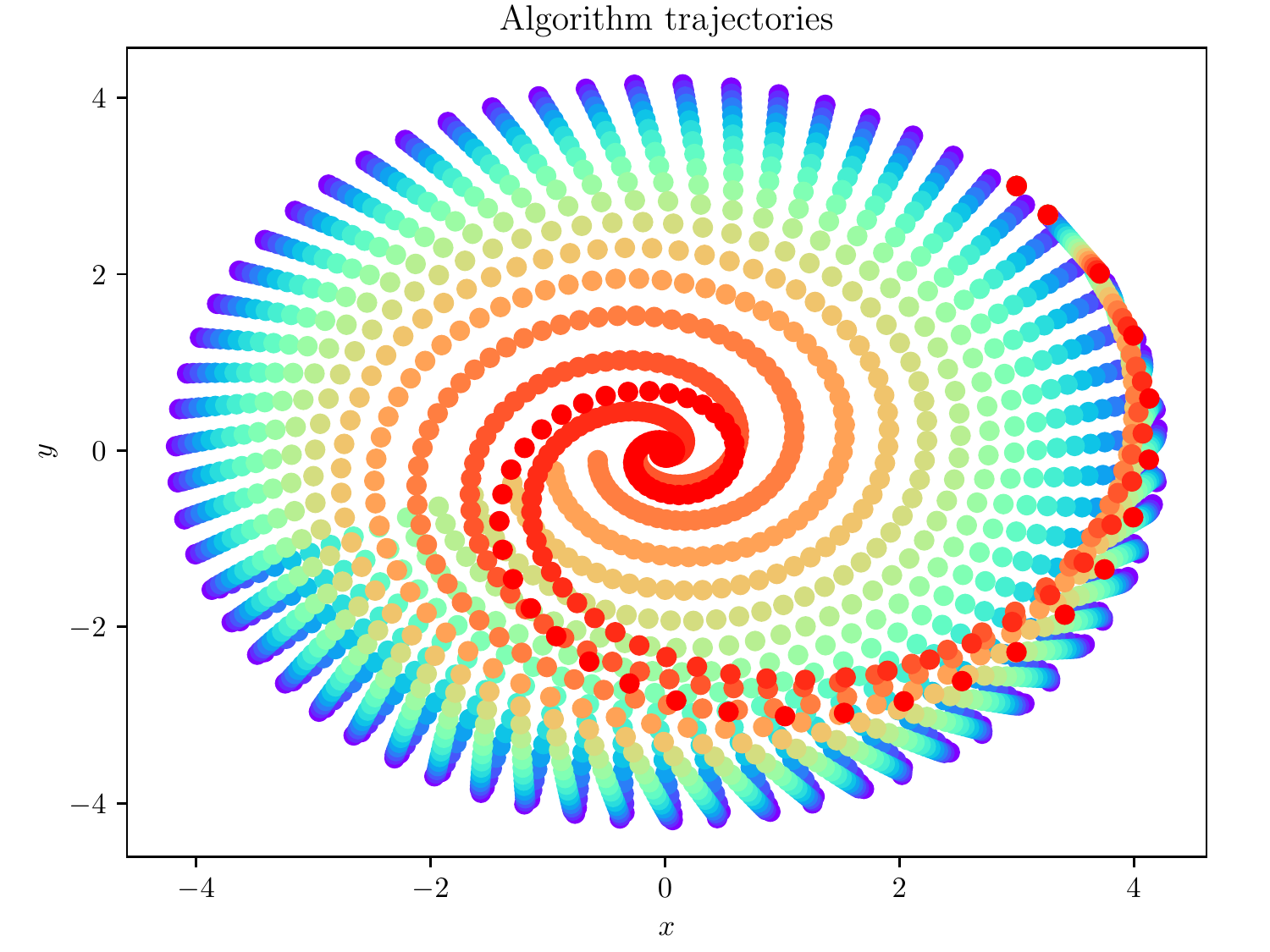}};
    \node at (0,-6.8) {\includegraphics[width=9cm]{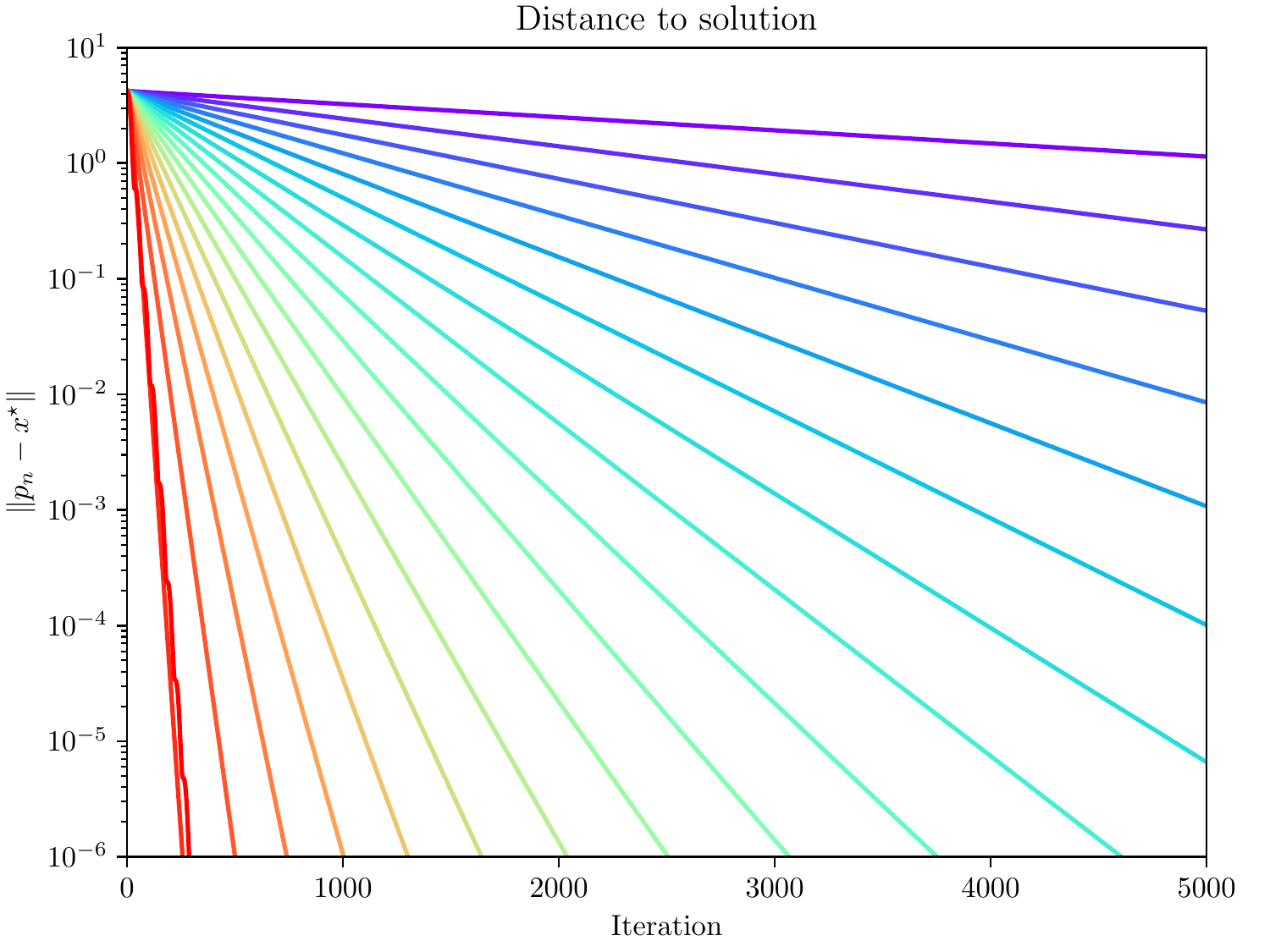}};

\begin{scope}[yshift=1.75cm,xshift=4.5cm]
    \xdef\perrow{5}
    \foreach \r in {1,...,19}{
                \xdef\temp{\noexpand\draw[color=matplotlibcolor\r of19,ultra thick] (0,-0.5*\r)--(0.5,-0.5*\r);}
                \temp
                \node[right] at (0.5,-0.5*\r) {$\kappa=\;$\pgfmathparse{(\r-10)/10}\pgfmathprintnumber[fixed,precision=2]{\pgfmathresult}};
}
    \draw (-0.2,-0.2) rectangle (2.2,-9.8);
    \end{scope}

\end{tikzpicture}
\caption{We evaluate the algorithm in \eqref{eq:alg-parallel-safeguarding-specific-lambda0-fixed-kappa} with $\kappa\in\{-0.9,-0.8,\ldots,0.9\}$ as specified in the legend to the right. The upper figure shows the 100 first $p_n$-iterates for the different $\kappa$ and the lower figure shows the distance to the unique solution. The performance is best for $\kappa\in\{0.8,0.9\}$ and many choices of $\kappa\in(0,1)$ outperform the standard forward--backward splitting method that is obtained by letting $\kappa=0$.}
\label{fig:numerics_fixed-kappa-full-range}
\end{figure}

\begin{table}
	\begin{center}
\caption{We report the number of iterations to reach accuracy $\|p_n-\xs\|\leq 10^{-6}$ for the algorithm in \eqref{eq:alg-parallel-safeguarding-specific-lambda0-fixed-kappa} with $\kappa\in\{-0.9,-0.8,\ldots,0.9\}$. The theory predicts sequence for all these values of $\kappa$. The choices $\kappa=0.8$ and $\kappa=0.9$ significantly outperform standard forward--backward splitting that is obtained by letting $\kappa=0$.}
\label{tab:numerics_fixed-kappa-full-range}
		{\renewcommand{\arraystretch}{1.3}
			\begin{tabular}{cc|cc|cc|cc}
\specialrule{2pt}{1pt}{1pt}
\textbf{$\kappa$} & \textbf{\# iter.} &\textbf{$\kappa$} & \textbf{\# iter.}&\textbf{$\kappa$} & \textbf{\# iter.}&\textbf{$\kappa$} & \textbf{\# iter.} \\ 
\hline
-0.9 & 58350 &
-0.8 & 27653 &
-0.7 & 17414 &
-0.6 & 12292 \\
-0.5 & 9219 &
-0.4 & 7170 &
-0.3 & 5706 &
-0.2 & 4607 \\
-0.1 & 3752 &
0.0 & 3068 &
0.1 & 2507 &
0.2 & 2040 \\
0.3 & 1643 &
0.4 & 1302 &
0.5 & 1005 &
0.6 & 741 \\
0.7 & 501 &
0.8 & 258 &
0.9 & 288 \\
\specialrule{2pt}{1pt}{1pt}
\end{tabular}}
\end{center}
\end{table}

In \cref{fig:numerics_fixed-kappa-best_range,tab:numerics_fixed-kappa-best_range}, we provide numerical results over a finer grid of the best performing $\kappa$. We set the range to $\kappa\in[0.8,0.9]$ and use a spacing of 0.02. We see that for $\kappa=0.8$ and $\kappa=0.82$, the distance to solution is non-oscillatory, while it oscillates for greater values of $\kappa$. All these choices of $\kappa$ perform very well.

\begin{figure}
\centering
\begin{tikzpicture}
    \node at (0,0) {\includegraphics[width=9cm]{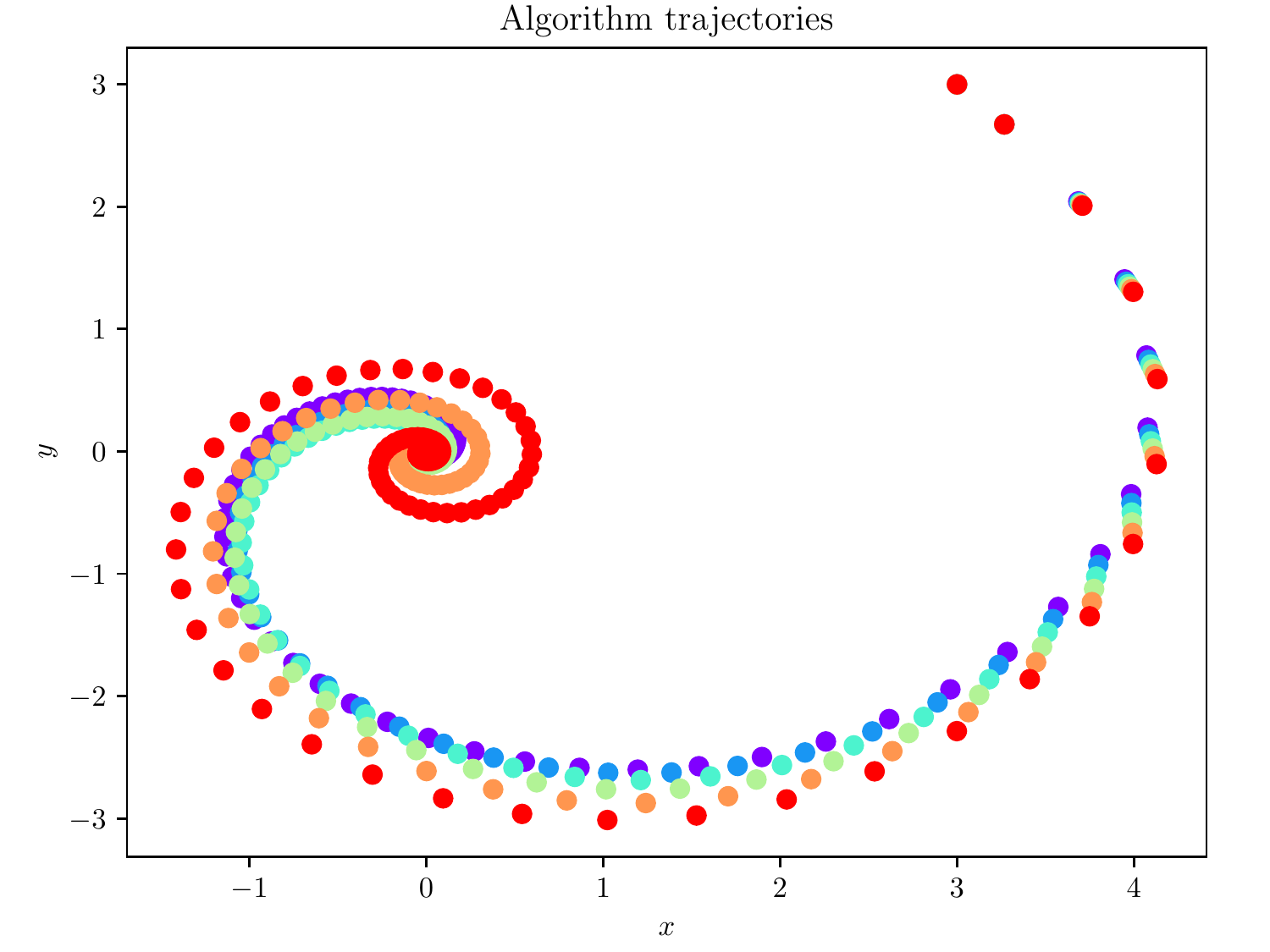}};
    \node at (0,-6.8) {\includegraphics[width=9cm]{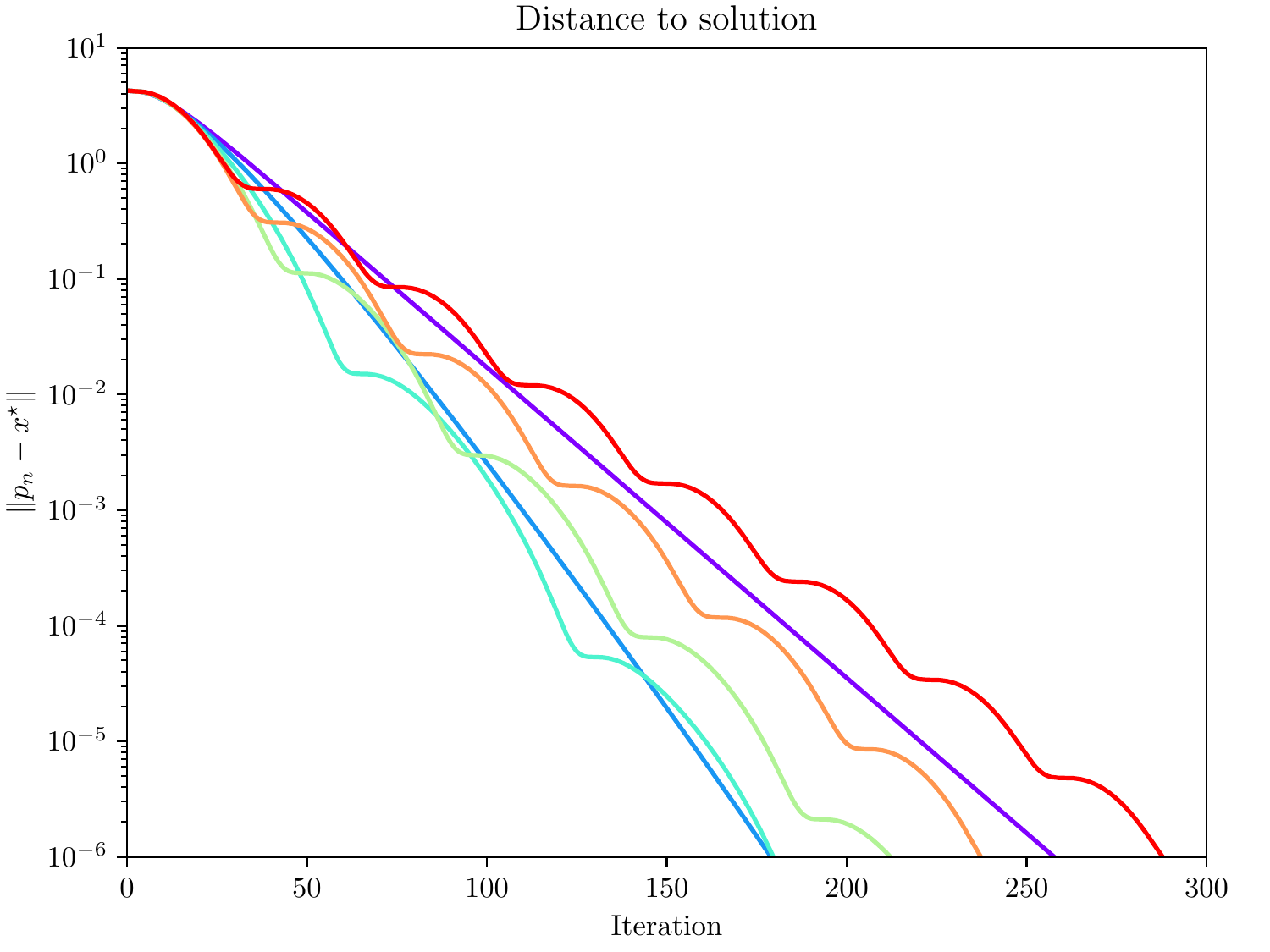}};

   \begin{scope}[yshift=-1.5cm,xshift=4.5cm]
    \xdef\perrow{3}
    \foreach \r in {1,...,6}{
                \xdef\temp{\noexpand\draw[color=matplotlibcolor\r of6,ultra thick] (0,-0.5*\r)--(0.5,-0.5*\r);}
                \temp
                \node[right] at (0.5,-0.5*\r) {$\kappa=\;$\pgfmathparse{(\r-1+40))/50}\pgfmathprintnumber[fixed,precision=2]{\pgfmathresult}};
}

    \draw (-0.2,-0.2) rectangle (2.1,-3.3);
    \end{scope}
    
\end{tikzpicture}
\caption{We evaluate the algorithm in \eqref{eq:alg-parallel-safeguarding-specific-lambda0-fixed-kappa} with $\kappa\in\{0.8,0.82,\ldots,0.9\}$ as specified in the legend to the right. The upper figure shows the 100 first $p_n$-iterates for the different $\kappa$ and the lower figure shows the distance to the unique solution. All these choices perform well and we go from a non-oscillatory behavior to an oscillatory behaviour within this range of $\kappa$.}
\label{fig:numerics_fixed-kappa-best_range}
\end{figure}

\begin{table}
	\begin{center}
\caption{We report the number of iterations to reach accuracy $\|p_n-\xs\|\leq 10^{-6}$ for the algorithm in \eqref{eq:alg-parallel-safeguarding-specific-lambda0-fixed-kappa} with $\kappa\in\{0.8,0.82,\ldots,0.9\}$. The theory predicts sequence convergence with all these $\kappa$. All choices of $\kappa$ perform well.}
 \label{tab:numerics_fixed-kappa-best_range}
		{\renewcommand{\arraystretch}{1.3}
			\begin{tabular}{cc|cc|cc}
\specialrule{2pt}{1pt}{1pt}
\textbf{$\kappa$} & \textbf{\# iter.} &\textbf{$\kappa$} & \textbf{\# iter.}&\textbf{$\kappa$} & \textbf{\# iter.} \\ 
\hline
0.80 & 258 &
0.82 & 179 &
0.84 & 180 \\
0.86 & 213 &
0.88 & 238 &
0.90 & 288 \\
\specialrule{2pt}{1pt}{1pt}
\end{tabular}}
\end{center}
\end{table}

\newpage

\section{Deferred results and proofs}\label{sec:omitted-proofs}
In what follows, we present some results that have been used in the previous sections along with the proof of \cref{thm:LP-identity} that was deferred to this section. Prior to that, we define the auxiliary parameter
\begin{align}\label{itm:def-param-theta-prime}
    \ptheta_n\defeq\prt{2-\gamma_n\hbeta}\mu_n+2\balpha_n\btheta_n
\end{align}
which frequently appears throughout this section.

We begin by establishing some identities between the  parameters defined in \cref{alg:main}. These identities are used several times in the proof of \cref{thm:LP-identity}.

\begin{proposition}\label{prop:parameters}
Consider the auxiliary parameters defined in step~\ref{alg:main:auxiliary-parameters} of \cref{alg:main}. Then, for all $n\in\nat$, the following identities hold
\renewcommand\theenumi\labelenumi
\renewcommand{\labelenumi}{{(\roman{enumi})}}
\begin{enumerate}[noitemsep, ref=\Cref{prop:parameters}~\theenumi]
    \item $\theta_n=\prt{2-\gamma_n\hbeta}\btheta_n+\htheta_n$;\label{itm:prop-theta-i}
    \item $\theta_n=2\btheta_n+\prt{2-\gamma_n\hbeta}(\lambda_n+\mu_n)$;\label{itm:prop-theta-ii}
    \item $\lambda_n^2\theta_n=\htheta_{n}(\lambda_{n}+\mu_{n})-2\btheta_n^2$.\label{itm:prop-theta-iii}
\end{enumerate}
\end{proposition}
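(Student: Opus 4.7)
The plan is to verify each of the three identities by direct substitution of the definitions of $\theta_n$, $\htheta_n$, $\btheta_n$ from Step~\ref{alg:main:auxiliary-parameters} of \cref{alg:main}, since each is a polynomial identity in the three quantities $\lambda_n$, $\mu_n$, and $\gamma_n\hbeta$. There is no conceptual obstacle here; the proof is routine algebra, and the main thing to keep track of is the cancellation pattern of the $\gamma_n\hbeta\lambda_n^2$ terms.

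For \labelcref{itm:prop-theta-i}, I would expand
\begin{equation*}
(2-\gamma_n\hbeta)\btheta_n + \htheta_n = (2-\gamma_n\hbeta)(\lambda_n+\mu_n-\lambda_n^2) + 2(\lambda_n+\mu_n)-\gamma_n\hbeta\lambda_n^2
\end{equation*}
and collect the coefficients of $(\lambda_n+\mu_n)$ and of $\lambda_n^2$. The two $\gamma_n\hbeta\lambda_n^2$ contributions cancel, leaving $(4-\gamma_n\hbeta)(\lambda_n+\mu_n)-2\lambda_n^2 = \theta_n$. For \labelcref{itm:prop-theta-ii}, substituting the definition of $\btheta_n$ gives
\begin{equation*}
2\btheta_n + (2-\gamma_n\hbeta)(\lambda_n+\mu_n) = (4-\gamma_n\hbeta)(\lambda_n+\mu_n) - 2\lambda_n^2 = \theta_n,
\end{equation*}
which is immediate.

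For \labelcref{itm:prop-theta-iii}, the cleanest approach is to introduce the shorthands $s = \lambda_n+\mu_n$ and $q = \lambda_n^2$, so that $\htheta_n = 2s - \gamma_n\hbeta q$, $\btheta_n = s - q$, and $\theta_n = (4-\gamma_n\hbeta)s - 2q$. Then
\begin{equation*}
\htheta_n s - 2\btheta_n^2 = (2s-\gamma_n\hbeta q)s - 2(s-q)^2 = -\gamma_n\hbeta q s + 4sq - 2q^2 = q\bigl((4-\gamma_n\hbeta)s - 2q\bigr) = q\theta_n,
\end{equation*}
which rewritten in the original variables is the claimed identity. Each step is purely mechanical expansion; the main obstacle is simply ensuring bookkeeping is correct across the three verifications.
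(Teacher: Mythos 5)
Your proof is correct and takes essentially the same approach as the paper's: direct substitution of the definitions of $\theta_n$, $\htheta_n$, and $\btheta_n$, with parts (i) and (ii) handled identically. The only cosmetic difference is in (iii), where you expand $\htheta_n(\lambda_n+\mu_n)-2\btheta_n^2$ directly (with the shorthands $s=\lambda_n+\mu_n$, $q=\lambda_n^2$) and factor out $\lambda_n^2$, whereas the paper substitutes identity (i) for $\theta_n$ and cancels; both are equivalent routine algebra and your computation checks out.
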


\begin{proof}
For \labelcref{itm:prop-theta-i}, from definition of $\btheta_n$ and $\htheta_n$, we have 
\begin{align*}
    \prt{2-\gamma_n\hbeta}\btheta_n+\htheta_n
    &= \prt{2-\gamma_n\hbeta}\prt{\lambda_n+\mu_n-\lambda_n^2}+\prt{2\lambda_n+2\mu_n-\gamma_n\hbeta\lambda_n^2}\\
    &= \prt{2-\gamma_n\hbeta}\prt{\lambda_n+\mu_n}-2\lambda_n^2+\gamma_n\hbeta\lambda_n^2+\prt{2\lambda_n+2\mu_n-\gamma_n\hbeta\lambda_n^2}\\
    &= \prt{2-\gamma_n\hbeta}\prt{\lambda_n+\mu_n}-2\lambda_n^2+2\prt{\lambda_n+\mu_n}\\
    &= \prt{4-\gamma_n\hbeta}\prt{\lambda_n+\mu_n}-2\lambda_n^2\\
    &=\theta_n,
\end{align*}
which holds by definition of $\theta_n$ in \cref{alg:main}. For \labelcref{itm:prop-theta-ii} we have
\begin{align*}
    2\btheta_n+\prt{2-\gamma_n\hbeta}(\lambda_n+\mu_n)
    &= 2\prt{\lambda_n+\mu_n-\lambda_n^2}+\prt{2-\gamma_n\hbeta}(\lambda_n+\mu_n)\\
    &= -2\lambda_n^2+\prt{4-\gamma_n\hbeta}(\lambda_n+\mu_n)\\
    &=\theta_n.
\end{align*}
For \labelcref{itm:prop-theta-iii}, after moving all terms to the left-hand side of the equality we get
\begin{align*}
    \lambda_n^2\theta_n+2\btheta_n^2-\htheta_{n}(\lambda_{n}+\mu_{n})
    &= \lambda_n^2\prt{\prt{2-\gamma_n\hbeta}\btheta_n+\htheta_n}+2\btheta_n^2-\htheta_{n}(\lambda_{n}+\mu_{n})\\
    &= \btheta_n\prt{\lambda_n^2\prt{2-\gamma_n\hbeta}+2\btheta_n}-\htheta_n(\lambda_n+\mu_n-\lambda_n^2)\\
    &= \btheta_n\prt{2\lambda_n+2\mu_n-\gamma_n\hbeta\lambda_n^2}-\htheta_n\btheta_n = \btheta_n\htheta_n-\htheta_n\btheta_n,
\end{align*}
where in the first equality $\theta_n$ is substituted using \labelcref{itm:prop-theta-i} and in the second and the third equalities, definitions of $\btheta_n$ and $\htheta_n$ are used, respectively. 
\end{proof}

The next lemma provides alternative expressions for the term inside the first norm in \eqref{eq:ell_n}.

\begin{lemma}\label{lemma:ell_n-identical-expressions}
Suppose that \cref{assum:monotone_inclusion} holds and consider the sequences generated by \cref{alg:main}. Then, for all $n\in\nat$, the following expressions represent the same vector
\renewcommand\theenumi\labelenumi
\renewcommand{\labelenumi}{{(\roman{enumi})}}
\begin{enumerate}[ref=\Cref{lemma:ell_n-identical-expressions}~\theenumi]
    \item $p_n - (1-\alpha_n)x_n-\alpha_np_{n-1} +\tfrac{\gamma_n\hbeta\lambda_n^2}{\htheta_n}u_n - \tfrac{2\btheta_n}{\theta_n}v_n$;\label{itm:lemma2-i}
    \item $p_n-\tfrac{2\btheta_n}{\theta_n}z_n+\tfrac{\ttheta_n}{\theta_n}y_n-\tfrac{2\lambda_n}{\theta_n}x_n-\tfrac{\ptheta_n}{\theta_n}p_{n-1}+\tfrac{2\btheta_n\balpha_n}{\theta_n}z_{n-1}-\tfrac{\ttheta_n\alpha_n}{\theta_n}y_{n-1}$;\label{itm:lemma2-ii}
    \item $\tfrac{1}{\lambda_n}\prt{x_{n+1}-x_n}+\tfrac{\ttheta_n}{\htheta_n}u_n+\tfrac{(2-\gamma_n\hbeta)(\lambda_n+\mu_n)}{\theta_n}v_n$.\label{itm:lemma2-iii}
\end{enumerate}
\end{lemma}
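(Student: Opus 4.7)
The statement is purely algebraic: the three expressions are linear combinations of quantities generated by \cref{alg:main}, and the claim is that they coincide as vectors. My strategy is to verify the equalities (ii)$=$(i) and (iii)$=$(i) separately by direct substitution of the update equations in Steps~\ref{alg:main:y}, \ref{alg:main:z}, and \ref{alg:main:x} of \cref{alg:main}, and simplification using the identities in \cref{prop:parameters}.

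For (ii)$=$(i), I would expand $y_n$ and $z_n$ in expression (ii) via Steps~\ref{alg:main:y} and~\ref{alg:main:z}, and then collect the coefficient of each distinct vector among $p_n,x_n,p_{n-1},z_{n-1},y_{n-1},u_n,v_n$. The coefficients of $z_{n-1}$ and $y_{n-1}$ cancel automatically (the $z_{n-1}$ contribution from $-\tfrac{2\btheta_n}{\theta_n}z_n$ is killed by the explicit $+\tfrac{2\btheta_n\balpha_n}{\theta_n}z_{n-1}$ term, and similarly for $y_{n-1}$). The coefficient of $x_n$ should reduce to $-(1-\alpha_n)$ using \cref{itm:prop-theta-ii} (that is, $\theta_n = 2\btheta_n + (2-\gamma_n\hbeta)(\lambda_n+\mu_n)$); the coefficient of $p_{n-1}$ should reduce to $-\alpha_n$ using the same identity together with the definition $\ptheta_n = (2-\gamma_n\hbeta)\mu_n + 2\balpha_n\btheta_n$; and the coefficient of $u_n$ should reduce to $\tfrac{\gamma_n\hbeta\lambda_n^2}{\htheta_n}$ via \cref{itm:prop-theta-iii} (that is, $\lambda_n^2\theta_n = \htheta_n(\lambda_n+\mu_n) - 2\btheta_n^2$). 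The coefficient of $v_n$ matches trivially.

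For (iii)$=$(i), I would substitute $x_{n+1} - x_n = \lambda_n(p_n - z_n) + \balpha_n\lambda_n(z_{n-1} - p_{n-1})$ from Step~\ref{alg:main:x} into expression (iii), divide by $\lambda_n$, and then expand $z_n$ using Step~\ref{alg:main:z}. The $z_{n-1}$ contributions cancel between the Step~\ref{alg:main:x} and Step~\ref{alg:main:z} substitutions, and the $p_{n-1}$ terms collapse to $-\alpha_n p_{n-1}$, leaving $p_n - (1-\alpha_n)x_n - \alpha_n p_{n-1}$ plus multiples of $u_n$ and $v_n$. The coefficient of $u_n$ becomes $\tfrac{\ttheta_n}{\htheta_n} - \tfrac{\btheta_n\gamma_n\hbeta}{\htheta_n}$, which equals $\tfrac{\gamma_n\hbeta\lambda_n^2}{\htheta_n}$ directly from the definition of $\ttheta_n$ and the identity $\ttheta_n - \btheta_n\gamma_n\hbeta = \gamma_n\hbeta\lambda_n^2$ (immediate from the definitions of $\ttheta_n$ and $\btheta_n$). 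The coefficient of $v_n$ becomes $\tfrac{(2-\gamma_n\hbeta)(\lambda_n+\mu_n) - \theta_n}{\theta_n} = -\tfrac{2\btheta_n}{\theta_n}$ by \cref{itm:prop-theta-ii}.

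The only obstacle is clerical: bookkeeping the coefficients of seven different vectors simultaneously. No conceptual difficulty arises, and each required simplification is pinned to a specific identity from \cref{prop:parameters} or to a definition in Step~\ref{alg:main:auxiliary-parameters} of \cref{alg:main}; in particular, the identity $\theta_n\lambda_n^2 = \htheta_n(\lambda_n+\mu_n) - 2\btheta_n^2$ is the only nontrivial one and is used solely to simplify the coefficient of $u_n$ in the (ii)$=$(i) verification.
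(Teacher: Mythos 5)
Your proposal is correct, and your verification of (ii)$=$(i) is essentially identical to the paper's: group $z_n-\balpha_n z_{n-1}$ and $y_n-\alpha_n y_{n-1}$, substitute Steps~\ref{alg:main:y} and~\ref{alg:main:z}, and reduce the coefficients of $x_n$, $p_{n-1}$, $u_n$, $v_n$ using \labelcref{itm:prop-theta-ii}, the definition of $\ptheta_n$, and \labelcref{itm:prop-theta-iii}, exactly as in the paper's displayed coefficient computations.

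Where you genuinely diverge is in the second half: you use \labelcref{itm:lemma2-i} as the hub and prove (iii)$=$(i) directly, whereas the paper uses \labelcref{itm:lemma2-ii} as the hub and proves (ii)$=$(iii). Your route is the more economical one. Substituting $\tfrac{1}{\lambda_n}(x_{n+1}-x_n)=p_n-z_n+\balpha_n(z_{n-1}-p_{n-1})$ from Step~\ref{alg:main:x} and then expanding $z_n$ via Step~\ref{alg:main:z} makes the $\balpha_n(z_{n-1}-p_{n-1})$ terms cancel immediately, leaving
\begin{equation*}
p_n-(1-\alpha_n)x_n-\alpha_n p_{n-1}+\tfrac{\ttheta_n-\btheta_n\gamma_n\hbeta}{\htheta_n}u_n+\tfrac{(2-\gamma_n\hbeta)(\lambda_n+\mu_n)-\theta_n}{\theta_n}v_n,
\end{equation*}
and both remaining coefficient identities are cheap: $\ttheta_n-\btheta_n\gamma_n\hbeta=\gamma_n\hbeta\lambda_n^2$ follows from the definitions of $\ttheta_n$ and $\btheta_n$ alone, and the $v_n$ coefficient collapses to $-\tfrac{2\btheta_n}{\theta_n}$ by \labelcref{itm:prop-theta-ii}. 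The paper's (ii)$=$(iii) derivation instead re-expands the full seven-vector expression and must again track the $x_n$ and $p_{n-1}$ coefficients (reusing its earlier coefficient equations), so your organization avoids one round of bookkeeping and invokes \labelcref{itm:prop-theta-iii} only once, in the (ii)$=$(i) half. One minor imprecision: the reduction of the $x_n$ coefficient to $-(1-\alpha_n)$ in the (ii)$=$(i) half needs not just \labelcref{itm:prop-theta-ii} but also the definitions of $\ttheta_n$ and $\alpha_n$ (specifically $1-\alpha_n=\tfrac{\lambda_n}{\lambda_n+\mu_n}$); this is definitional bookkeeping rather than a gap.
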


\begin{proof} We, first, show that \labelcref{itm:lemma2-ii} represents the same vector as \labelcref{itm:lemma2-i}:
\begin{align*}
    p_n&-\tfrac{2\btheta_n}{\theta_n}z_n+\tfrac{\ttheta_n}{\theta_n}y_n-\tfrac{2\lambda_n}{\theta_n}x_n-\tfrac{\ptheta_n}{\theta_n}p_{n-1}+\tfrac{2\btheta_n\balpha_n}{\theta_n}z_{n-1}-\tfrac{\ttheta_n\alpha_n}{\theta_n}y_{n-1}\\
    &= p_n - \tfrac{2\btheta_n}{\theta_n}(z_n-\balpha_nz_{n-1}) + \tfrac{\ttheta_n}{\theta_n}(y_n-\alpha_ny_{n-1})-\tfrac{2\lambda_n}{\theta_n}x_n -\tfrac{\ptheta_n}{\theta_n}p_{n-1}\\
    &= p_n - \tfrac{2\btheta_n}{\theta_n}\prt{(1-\alpha_n)x_n + (\alpha_n-\balpha_n)p_{n-1}  + \tfrac{\btheta_n\gamma_n\hbeta}{\htheta_n}u_n + v_n}\\
    &\qquad+ \tfrac{\ttheta_n}{\theta_n}((1-\alpha_n)x_n + u_n)-\tfrac{2\lambda_n}{\theta_n}x_n -\tfrac{\ptheta_n}{\theta_n}p_{n-1}\\
    &= p_n - \tfrac{2(1-\alpha_n)\btheta_n-(1-\alpha_n)\ttheta_n+2\lambda_n}{\theta_n}x_n - \tfrac{\ptheta_n+2\btheta_n(\alpha_n-\balpha_n)}{\theta_n}p_{n-1}\\
    &\qquad+ \tfrac{\htheta\ttheta_n-2\btheta_n^2\gamma_n\hbeta}{\theta_n\htheta_n}u_n-\tfrac{2\btheta_n}{\theta_n}v_n\\
    &=p_n - (1-\alpha_n)x_n-\alpha_np_{n-1} +\tfrac{\gamma_n\hbeta\lambda_n^2}{\htheta_n}u_n - \tfrac{2\btheta_n}{\theta_n}v_n
\end{align*}
where the coefficients of the last equality are found as follows. The numerator of the coefficient of $x_n$ reads
\begin{equation}\label{eq:proof-lemma1-x_n-coefficient}
\begin{aligned}
    2(1&-\alpha_n)\btheta_n-(1-\alpha_n)\ttheta+2\lambda_n\\
    &= (1-\alpha_n)\prt{\theta_n-\prt{2-\gamma_n\hbeta}(\lambda_n+\mu_n)}\\
    &\qquad-(1-\alpha_n)\gamma_n\hbeta(\lambda_n+\mu_n)+2\lambda_n \\
    &= (1-\alpha_n)\theta_n-2(1-\alpha_n)(\lambda_n+\mu_n)+2\lambda_n\\
    &=(1-\alpha_n)\theta_n-2\tfrac{\lambda_n}{\lambda_n+\mu_n}(\lambda_n+\mu_n)+2\lambda_n = (1-\alpha_n)\theta_n
\end{aligned}
\end{equation}
where in the first equality, $\btheta_n$ is substituted from \labelcref{itm:prop-theta-ii}, and $\ttheta_n$ and $\alpha_n$ are substituted by their definitions in \cref{alg:main}. The numerator of the coefficient of $p_{n-1}$ is
\begin{equation}\label{eq:proof-lemma1-p_(n-1)-coefficient}
\begin{aligned}
    \ptheta_n+2\btheta_n(\alpha_n-\balpha_n)& = \prt{2-\gamma_n\hbeta}\mu_n+2\balpha_n\btheta_n+2\btheta_n(\alpha_n-\balpha_n)\\
    &= \prt{2-\gamma_n\hbeta}\mu_n+2\btheta_n\alpha_n\\
    &= \prt{2-\gamma_n\hbeta}\alpha_n(\lambda_n+\mu_n)+2\btheta_n\alpha_n= \alpha_n\theta_n
\end{aligned}
\end{equation}
where in the first equality \labelcref{itm:def-param-theta-prime} is used, the third equality is obtained using the definition of $\alpha_n$, and \labelcref{itm:prop-theta-ii} is utilized in the last equality. For the numerator of $u_n$ we get
\begin{equation}\label{eq:proof-lemma1-u_n-coefficient}
\begin{aligned}
    \htheta\ttheta_n-2\btheta_n^2\gamma_n\hbeta &= \htheta\gamma_n\hbeta(\lambda_n+\mu_n)-2\btheta_n^2\gamma_n\hbeta\\
    &= \gamma_n\hbeta\prt{\htheta_n(\lambda_n+\mu_n)-2\btheta_n^2} = \gamma_n\hbeta\lambda_n^2\theta_n
\end{aligned}    
\end{equation}
where the first equality is obtained by substitution of the definition of $\ttheta_n$ from \cref{alg:main}, and in the last equality \labelcref{itm:prop-theta-iii} is used.

Now, we show that \labelcref{itm:lemma2-ii} and \labelcref{itm:lemma2-iii} represent the same vector. Starting from \labelcref{itm:lemma2-ii}, we have
\begin{align*}
    p_n&-\tfrac{2\btheta_n}{\theta_n}z_n+\tfrac{\ttheta_n}{\theta_n}y_n-\tfrac{2\lambda_n}{\theta_n}x_n-\tfrac{\ptheta_n}{\theta_n}p_{n-1}+\tfrac{2\btheta_n\balpha_n}{\theta_n}z_{n-1}-\tfrac{\ttheta_n\alpha_n}{\theta_n}y_{n-1}\\
    &= p_n - \tfrac{2\btheta_n}{\theta_n}(z_n-\balpha_nz_{n-1}) + \tfrac{\ttheta_n}{\theta_n}(y_n-\alpha_ny_{n-1})-\tfrac{2\lambda_n}{\theta_n}x_n -\tfrac{\ptheta_n}{\theta_n}p_{n-1}\\
    &= \tfrac{1}{\lambda_n}(x_{n+1}-x_{n}) + z_n + \balpha_n(p_{n-1}-z_{n-1}) - \tfrac{2\btheta_n}{\theta_n}(z_n-\balpha_nz_{n-1})  \\ 
    &\qquad+ \tfrac{\ttheta_n}{\theta_n}(y_n-\alpha_ny_{n-1})-\tfrac{2\lambda_n}{\theta_n}x_n -\tfrac{\ptheta_n}{\theta_n}p_{n-1}\\
    &= \tfrac{1}{\lambda_n}(x_{n+1}-x_{n})+\tfrac{\theta_n-2\btheta_n}{\theta_n}\prt{z_n-\balpha_nz_{n-1}} + \tfrac{\ttheta_n}{\theta_n}(y_n-\alpha_ny_{n-1})-\tfrac{2\lambda_n}{\theta_n}x_n\\ 
    &\qquad + \tfrac{\balpha_n\theta_n-\ptheta_n}{\theta_n}p_{n-1}\\
    &= \tfrac{1}{\lambda_n}(x_{n+1}-x_{n})+\tfrac{\theta_n-2\btheta_n}{\theta_n}\prt{(1-\alpha_n)x_n + (\alpha_n-\balpha_n)p_{n-1}  + \tfrac{\btheta_n\gamma_n\hbeta}{\htheta_n}u_n + v_n}\\
    &\qquad+ \tfrac{\ttheta_n}{\theta_n}\prt{(1-\alpha_n)x_n + u_n}-\tfrac{2\lambda_n}{\theta_n}x_n + \tfrac{\balpha\theta_n-\ptheta_n}{\theta_n}p_{n-1}\\
    &= \tfrac{1}{\lambda_n}(x_{n+1}-x_{n}) + \tfrac{\prt{\theta_n-2\btheta_n}\btheta_n\gamma_n\hbeta+\ttheta_n\htheta_n}{\theta_n\htheta_n}u_n + \tfrac{\theta_n-2\btheta_n}{\theta_n}v_n\\
    &\qquad+ \tfrac{\prt{\theta_n-2\btheta_n}(1-\alpha_n)+\ttheta_n(1-\alpha_n)-2\lambda_n}{\theta_n}x_n + \tfrac{(\theta_n-2\btheta_n)(\alpha_n-\balpha_n)+\balpha_n\theta_n-\ptheta_n}{\theta_n}p_{n-1}\\
    &=\tfrac{1}{\lambda_n}\prt{x_{n+1}-x_n}+\tfrac{\ttheta_n}{\htheta_n}u_n+\tfrac{(2-\gamma_n\hbeta)(\lambda_n+\mu_n)}{\theta_n}v_n
\end{align*}
In the second equality, the definition of $x_{n+1}$ in step~\ref{alg:main:x} of \cref{alg:main} is used. In the fourth equality, the definition of $y_n$ in step~\ref{alg:main:y} and the definition of $z_n$ in step~\ref{alg:main:z} of \cref{alg:main} are used. In the last equality, the coefficient of $x_n$ is found to be $-\tfrac{1}{\lambda_n}$ by \eqref{eq:proof-lemma1-x_n-coefficient}, the coefficient of $p_{n-1}$ is zero by  \eqref{eq:proof-lemma1-p_(n-1)-coefficient}, the coefficient of $v_n$ is found by \labelcref{itm:prop-theta-ii}, and for the coefficient of $u_n$ we have 
\begin{align*}
    \prt{\theta_n-2\btheta_n}\btheta_n\gamma_n\hbeta+\ttheta_n\htheta_n &= \theta_n\btheta_n\gamma_n\hbeta-2\btheta^2\gamma_n\hbeta+\ttheta_n\htheta_n\\
    &= \theta_n\btheta_n\gamma_n\hbeta + \gamma_n\hbeta\lambda_n^2\theta_n\\
    &= \theta_n\gamma_n\hbeta\prt{\btheta_n+\lambda_n^2}=\theta_n\gamma_n\hbeta\prt{\lambda_n+\mu_n}
\end{align*}
where  the second equality is obtained by \eqref{eq:proof-lemma1-u_n-coefficient}, and in the last equality the definition of $\btheta_n$ is used. This concludes the proof.
\end{proof}

\subsection{Proof of \cref{thm:LP-identity}}

\begin{proof}
We define the following  quantity 
\begin{equation}\label{eq:Lyapunov-equality}
    \begin{aligned}
        \Delta_n \defeq V_{n+1} - V_n &+ 2\gamma_n(\lambda_n-\balpha_{n+1}\lambda_{n+1})\phi_n + \el{n-1}\\
        &- (\lambda_{n}+\mu_{n})\lp\tfrac{\ttheta_{n}}{\htheta_{n}}\norm{u_{n}}_{M}^2 + \tfrac{\htheta_{n}}{\theta_{n}}\norm{v_{n}}_{M}^2\rp 
    \end{aligned}
\end{equation}
and prove the result by showing that, for all $n\in\nat$, it is identical to zero. By substituting $V_{n+1}$ and $V_n$ in \eqref{eq:Lyapunov-equality}, we get
\begin{align*}
    \Delta_n &= \norm{x_{n+1}-\xs}_M^2 + \el{n} + 2\lambda_{n+1}\gamma_{n+1}\alpha_{n+1}\phi_n\\
    &\quad- \norm{x_{n}-\xs}_M^2 - \el{n-1} - 2\lambda_{n}\gamma_{n}\alpha_{n}\phi_{n-1}\\
    &\quad + 2\gamma_n(\lambda_n-\balpha_{n+1}\lambda_{n+1})\phi_n + \el{n-1} -  (\lambda_{n}+\mu_{n})\lp\tfrac{\ttheta_{n}}{\htheta_{n}}\norm{u_{n}}_{M}^2+\tfrac{\htheta_{n}}{\theta_{n}}\norm{v_{n}}_{M}^2\rp\\
    &= \norm{x_{n+1}-\xs}_M^2 - \norm{x_{n}-\xs}_M^2 + \el{n} - 2\lambda_{n}\gamma_{n}\alpha_{n}\phi_{n-1} + 2\gamma_n\lambda_n\phi_n \nonumber\\
    &\quad - (\lambda_{n}+\mu_{n})\lp\tfrac{\ttheta_{n}}{\htheta_{n}}\norm{u_{n}}_{M}^2+\tfrac{\htheta_{n}}{\theta_{n}}\norm{v_{n}}_{M}^2\rp,
\end{align*}
where in the last equality we used $\gamma_n\balpha_{n+1}=\gamma_{n+1}\alpha_{n+1}$. Next, substituting $\el{n}$ from \eqref{eq:ell_n}, and $\phi_{n-1}$ and $\phi_n$ from \eqref{eq:phi-def} on the right-hand side of the last equality above, yields
\begin{align*}
    \Delta_n &= \norm{x_{n+1}-\xs}_M^2 - \norm{x_{n}-\xs}_M^2\\
    &\quad+ \tfrac{1}{2}\theta_n\norm{p_n-x_n+\alpha_n(x_n-p_{n-1}) +\tfrac{\gamma_n\hbeta\lambda_n^2}{\htheta_n}u_n - \tfrac{2\btheta_n}{\theta_n}v_n}_M^2 \nonumber\\
    &\quad+ 2\mu_{n}\gamma_{n}\inpr{\tfrac{z_{n}-p_{n}}{\gamma_{n}}-\tfrac{z_{n-1}-p_{n-1}}{\gamma_{n-1}}}{p_{n}-p_{n-1}}_M\\
    &\quad+ \tfrac{\mu_{n}\gamma_{n}\hbeta}{2}\norm{p_{n}-y_{n}-(p_{n-1}-y_{n-1})}_M^2\\
    &\quad- 2\lambda_{n}\gamma_{n}\alpha_{n}\lp\inpr{\tfrac{z_{n-1}-p_{n-1}}{\gamma_{n-1}}}{p_{n-1}-\xs}_M + \tfrac{\hbeta}{4}\norm{y_{n-1}-p_{n-1}}_M^2\rp\\
    &\quad+ 2\lambda_n\gamma_n\lp\inpr{\tfrac{z_n-p_n}{\gamma_n}}{p_n-\xs}_M + \tfrac{\hbeta}{4}\norm{y_n-p_n}_M^2\rp\\
    &\quad - (\lambda_{n}+\mu_{n})\lp\tfrac{\ttheta_{n}}{\htheta_{n}}\norm{u_{n}}_{M}^2+\tfrac{\htheta_{n}}{\theta_{n}}\norm{v_{n}}_{M}^2\rp\\
    &= \norm{x_{n+1}-\xs}_M^2 - \norm{x_{n}-\xs}_M^2 + 2\lambda_n\gamma_n\inpr{\tfrac{z_n-p_n}{\gamma_n}}{p_n-\xs}_M\\
    &\quad- 2\lambda_{n}\gamma_{n}\alpha_{n}\inpr{\tfrac{z_{n-1}-p_{n-1}}{\gamma_{n-1}}}{p_{n-1}-p_n+p_n-\xs}_M \\
    &\quad+ 2\mu_{n}\gamma_{n}\inpr{\tfrac{z_{n}-p_{n}}{\gamma_{n}}-\tfrac{z_{n-1}-p_{n-1}}{\gamma_{n-1}}}{p_{n}-p_{n-1}}_M\\
    &\quad+ \tfrac{1}{2}\theta_n\norm{p_n-x_n+\alpha_n(x_n-p_{n-1}) +\tfrac{\gamma_n\hbeta\lambda_n^2}{\htheta_n}u_n - \tfrac{2\btheta_n}{\theta_n}v_n}_M^2\\
    &\quad+ \tfrac{\mu_{n}\gamma_{n}\hbeta}{2}\norm{p_{n}-y_{n}-(p_{n-1}-y_{n-1})}_M^2 - \tfrac{\lambda_{n}\gamma_{n}\alpha_{n}\hbeta}{2}\norm{y_{n-1}-p_{n-1}}_M^2\\
    &\quad + \tfrac{\lambda_n\gamma_n\hbeta}{2}\norm{y_n-p_n}_M^2 - (\lambda_{n}+\mu_{n})\lp\tfrac{\ttheta_{n}}{\htheta_{n}}\norm{u_{n}}_{M}^2+\tfrac{\htheta_{n}}{\theta_{n}}\norm{v_{n}}_{M}^2\rp\\
    &= \norm{x_{n+1}-\xs}_M^2 - \norm{x_{n}-\xs}_M^2\\
    &\quad+ 2\inpr{\lambda_n(z_n-p_n)-\balpha_n\lambda_n(z_{n-1}-p_{n-1})}{p_n-\xs}_M\\
    &\quad+ 2\inpr{\mu_{n}(z_{n}-p_{n})+(\balpha_n\lambda_n-\tfrac{\gamma_n}{\gamma_{n-1}}\mu_n)(z_{n-1}-p_{n-1})}{p_{n}-p_{n-1}}_M\\
    &\quad+ \tfrac{1}{2}\theta_n\norm{p_n-x_n+\alpha_n(x_n-p_{n-1}) +\tfrac{\gamma_n\hbeta\lambda_n^2}{\htheta_n}u_n - \tfrac{2\btheta_n}{\theta_n}v_n}_M^2\\
    &\quad+ \tfrac{\mu_{n}\gamma_{n}\hbeta}{2}\norm{p_{n}-y_{n}-(p_{n-1}-y_{n-1})}_M^2 - \tfrac{\lambda_{n}\gamma_{n}\alpha_{n}\hbeta}{2}\norm{y_{n-1}-p_{n-1}}_M^2\\
    &\quad + \tfrac{\lambda_n\gamma_n\hbeta}{2}\norm{y_n-p_n}_M^2 - (\lambda_{n}+\mu_{n})\lp\tfrac{\ttheta_{n}}{\htheta_{n}}\norm{u_{n}}_{M}^2+\tfrac{\htheta_{n}}{\theta_{n}}\norm{v_{n}}_{M}^2\rp.
\end{align*}
We define 
\begin{equation}\label{eq:omega}
    \omega_n\defeq\balpha_n\lambda_n-\tfrac{\gamma_n}{\gamma_{n-1}}\mu_n   
\end{equation}
and substitute it in the last equality above; and also from step~\ref{alg:main:x} of \cref{alg:main}, we replace $\lambda_n(z_n-p_n)-\balpha_n\lambda_n(z_{n-1}-p_{n-1})$ by $x_n-x_{n+1}$. Then, we get
\begin{align*}
    \Delta_n &= \norm{x_{n+1}-\xs}_M^2 - \norm{x_{n}-\xs}_M^2 + 2\inpr{x_n-x_{n+1}}{p_n-\xs}_M\\
    &\quad+ 2\inpr{\mu_{n}(z_{n}-p_{n})+\omega_n(z_{n-1}-p_{n-1})}{p_{n}-p_{n-1}}_M\\
    &\quad+ \tfrac{1}{2}\theta_n\norm{p_n-x_n+\alpha_n(x_n-p_{n-1}) +\tfrac{\gamma_n\hbeta\lambda_n^2}{\htheta_n}u_n - \tfrac{2\btheta_n}{\theta_n}v_n}_M^2\\
    &\quad+ \tfrac{\mu_{n}\gamma_{n}\hbeta}{2}\norm{p_{n}-y_{n}-(p_{n-1}-y_{n-1})}_M^2 - \tfrac{\lambda_{n}\gamma_{n}\alpha_{n}\hbeta}{2}\norm{y_{n-1}-p_{n-1}}_M^2\\
    &\quad + \tfrac{\lambda_n\gamma_n\hbeta}{2}\norm{y_n-p_n}_M^2 - (\lambda_{n}+\mu_{n})\lp\tfrac{\ttheta_{n}}{\htheta_{n}}\norm{u_{n}}_{M}^2+\tfrac{\htheta_{n}}{\theta_{n}}\norm{v_{n}}_{M}^2\rp\\
    &= \norm{x_{n+1}-p_n}_M^2 - \norm{x_{n}-p_n}_M^2\\
    &\quad+ 2\inpr{\mu_{n}(z_{n}-p_{n})+\omega_n(z_{n-1}-p_{n-1})}{p_{n}-p_{n-1}}_M\\
    &\quad+ \tfrac{1}{2}\theta_n\norm{p_n-x_n+\alpha_n(x_n-p_{n-1}) +\tfrac{\gamma_n\hbeta\lambda_n^2}{\htheta_n}u_n - \tfrac{2\btheta_n}{\theta_n}v_n}_M^2\\
    &\quad+ \tfrac{\mu_{n}\gamma_{n}\hbeta}{2}\norm{p_{n}-y_{n}-(p_{n-1}-y_{n-1})}_M^2 - \tfrac{\lambda_{n}\gamma_{n}\alpha_{n}\hbeta}{2}\norm{y_{n-1}-p_{n-1}}_M^2\\
    &\quad + \tfrac{\lambda_n\gamma_n\hbeta}{2}\norm{y_n-p_n}_M^2 - (\lambda_{n}+\mu_{n})\lp\tfrac{\ttheta_{n}}{\htheta_{n}}\norm{u_{n}}_{M}^2+\tfrac{\htheta_{n}}{\theta_{n}}\norm{v_{n}}_{M}^2\rp
\end{align*}
where in the last equality we used the identity $2\inpr{a-b}{c-d}_M+\norm{b-d}_M^2-\norm{a-d}_M^2=\norm{b-c}_M^2-\norm{a-c}_M^2$ for all $a,b,c,d\in\PrimS$. Now, inserting $x_{n+1}$ from step~\ref{alg:main:x} of \cref{alg:main}, yields
\begin{align*}
    \Delta_n &= \norm{x_n-p_n+\lambda_n(p_n - z_n) + \lambda_n\balpha_n(z_{n-1}-p_{n-1})}_M^2 - \norm{x_{n}-p_n}_M^2 \\
    &\quad+ 2\inpr{\mu_{n}(z_{n}-p_{n})+\omega_n(z_{n-1}-p_{n-1})}{p_{n}-p_{n-1}}_M\\
    &\quad+ \tfrac{1}{2}\theta_n\norm{p_n-x_n+\alpha_n(x_n-p_{n-1}) +\tfrac{\gamma_n\hbeta\lambda_n^2}{\htheta_n}u_n - \tfrac{2\btheta_n}{\theta_n}v_n}_M^2\\
    &\quad+ \tfrac{\mu_{n}\gamma_{n}\hbeta}{2}\norm{p_{n}-y_{n}-(p_{n-1}-y_{n-1})}_M^2 - \tfrac{\lambda_{n}\gamma_{n}\alpha_{n}\hbeta}{2}\norm{y_{n-1}-p_{n-1}}_M^2\\
    &\quad + \tfrac{\lambda_n\gamma_n\hbeta}{2}\norm{y_n-p_n}_M^2 - (\lambda_{n}+\mu_{n})\lp\tfrac{\ttheta_{n}}{\htheta_{n}}\norm{u_{n}}_{M}^2+\tfrac{\htheta_{n}}{\theta_{n}}\norm{v_{n}}_{M}^2\rp\\
    &=\norm{\lambda_n(p_n-z_n)+\lambda_n\balpha_n(z_{n-1}-p_{n-1})}_M^2\\
    &\quad+ 2\inpr{x_n-p_n}{\lambda_n(p_n-z_n)+\lambda_n\balpha_n(z_{n-1}-p_{n-1})}_M \\
    &\quad+ 2\inpr{\mu_{n}(z_{n}-p_{n})+\omega_n(z_{n-1}-p_{n-1})}{p_{n}-p_{n-1}}_M\\
    &\quad+ \tfrac{1}{2}\theta_n\norm{p_n-x_n+\alpha_n(x_n-p_{n-1}) +\tfrac{\gamma_n\hbeta\lambda_n^2}{\htheta_n}u_n - \tfrac{2\btheta_n}{\theta_n}v_n}_M^2\\
    &\quad+ \tfrac{\mu_{n}\gamma_{n}\hbeta}{2}\norm{p_{n}-y_{n}-(p_{n-1}-y_{n-1})}_M^2 - \tfrac{\lambda_{n}\gamma_{n}\alpha_{n}\hbeta}{2}\norm{y_{n-1}-p_{n-1}}_M^2\\
    &\quad + \tfrac{\lambda_n\gamma_n\hbeta}{2}\norm{y_n-p_n}_M^2 - (\lambda_{n}+\mu_{n})\lp\tfrac{\ttheta_{n}}{\htheta_{n}}\norm{u_{n}}_{M}^2+\tfrac{\htheta_{n}}{\theta_{n}}\norm{v_{n}}_{M}^2\rp.
\end{align*}
Next, using \cref{lemma:ell_n-identical-expressions} and steps~\ref{alg:main:y}--\ref{alg:main:z} of \cref{alg:main}, we replace the terms including $u_n$ and $v_n$ in terms of the iterates
\begin{align*}
    \Delta_n &= \norm{\lambda_n(p_n-z_n)+\lambda_n\balpha_n(z_{n-1}-p_{n-1})}_M^2\\
    &\quad+ 2\inpr{x_n-p_n}{\lambda_n(p_n-z_n)+\lambda_n\balpha_n(z_{n-1}-p_{n-1})}_M \\
    &\quad+ 2\inpr{\mu_{n}(z_{n}-p_{n})+\omega_n(z_{n-1}-p_{n-1})}{p_{n}-p_{n-1}}_M\\
    &\quad+ \tfrac{\theta_n}{2}\norm{p_n-\tfrac{2\btheta_n}{\theta_n}z_n+\tfrac{\ttheta_n}{\theta_n}y_n-\tfrac{2\lambda_n}{\theta_n}x_n-\tfrac{\ptheta_n}{\theta_n}p_{n-1}+\tfrac{2\btheta_n\balpha_n}{\theta_n}z_{n-1}-\tfrac{\ttheta_n\alpha_n}{\theta_n}y_{n-1}}_M^2\\
    &\quad+ \tfrac{\mu_{n}\gamma_{n}\hbeta}{2}\norm{p_{n}-y_{n}-(p_{n-1}-y_{n-1})}_M^2 - \tfrac{\lambda_{n}\gamma_{n}\alpha_{n}\hbeta}{2}\norm{y_{n-1}-p_{n-1}}_M^2\\
    &\quad + \tfrac{\lambda_n\gamma_n\hbeta}{2}\norm{y_n-p_n}_M^2 - \tfrac{(\lambda_{n}+\mu_{n})\ttheta_{n}}{\htheta_{n}}\norm{y_n-(1-\alpha_n)x_n-\alpha_ny_{n-1}}_{M}^2\\
    &\quad- \tfrac{(\lambda_{n}+\mu_{n})\htheta_{n}}{\theta_{n}}\Big\Vert z_n-\tfrac{\btheta_n\gamma_n\hbeta}{\htheta_n}y_n-\tfrac{\prt{2-\gamma_n\hbeta}\lambda_n}{\htheta_n}x_n\\
    &\hspace{40mm}+(\balpha_n-\alpha_n)p_{n-1}-\balpha_nz_{n-1}+\tfrac{\alpha_n\btheta_n\gamma_n\hbeta}{\htheta_n}y_{n-1}\Big\Vert_{M}^2
\end{align*}
where we used
\begin{align*}
    v_n&=z_n-(1-\alpha_n)x_n+(\balpha_n-\alpha_n)p_{n-1}-\balpha_nz_{n-1}-\hbeta\tfrac{\btheta_n\gamma_n}{\htheta_n}u_n\\
    &=z_n-\tfrac{\btheta_n\gamma_n\hbeta}{\htheta_n}y_n-\tfrac{\prt{2-\gamma_n\hbeta}\lambda_n}{\htheta_n}x_n+(\balpha_n-\alpha_n)p_{n-1}-\balpha_nz_{n-1}+\tfrac{\alpha_n\btheta_n\gamma_n\hbeta}{\htheta_n}y_{n-1}
\end{align*}
which is obtained by substituting $u_n$ from step~\ref{alg:main:y} into step~\ref{alg:main:z} of \cref{alg:main}. Next, we expand the terms on the right-hand side of the last equality above which include $p_n$. This yields
\begin{align*}
    \Delta_n &= \lambda_n^2\norm{p_n}_M^2+2\inpr{p_n}{\lambda_n^2\prt{-z_n+\balpha_nz_{n-1}-\balpha_np_{n-1}}}_M\\
    &\quad+ \lambda_n^2\norm{z_n-\balpha_nz_{n-1}+\balpha_np_{n-1}}_M^2\\
    &\quad- 2\lambda_n\norm{p_n}_M^2+2\inpr{p_n}{\lambda_n\prt{z_n+x_n+\balpha_np_{n-1}-\balpha_nz_{n-1}}}_M\\
    &\quad+ 2\inpr{x_n}{\lambda_n\prt{-z_n-\balpha_np_{n-1}+\balpha_nz_{n-1}}}_M\\
    &\quad- 2\mu_n\norm{p_n}_M^2+2\inpr{p_n}{\mu_{n}z_{n}+(\mu_n-\omega_n)p_{n-1}+\omega_nz_{n-1}}_M\\
    &\quad+ 2\inpr{\mu_{n}z_{n}+\omega_n(z_{n-1}-p_{n-1})}{-p_{n-1}}_M + \tfrac{1}{2}\theta_n\norm{p_n}_M^2\\
    &\quad + 2\inpr{p_n}{\tfrac{\theta_n}{2}\prt{-\tfrac{2\btheta_n}{\theta_n}z_n+\tfrac{\ttheta_n}{\theta_n}y_n-\tfrac{2\lambda_n}{\theta_n}x_n}}_M\\
    &\quad + 2\inpr{p_n}{\tfrac{\theta_n}{2}\prt{-\tfrac{\ptheta_n}{\theta_n}p_{n-1}+\tfrac{2\btheta_n\balpha_n}{\theta_n}z_{n-1}-\tfrac{\ttheta_n\alpha_n}{\theta_n}y_{n-1}}}_M\\
    &\quad+ \tfrac{1}{2}\theta_n\norm{-\tfrac{2\btheta_n}{\theta_n}z_n+\tfrac{\ttheta_n}{\theta_n}y_n-\tfrac{2\lambda_n}{\theta_n}x_n-\tfrac{\ptheta_n}{\theta_n}p_{n-1}+\tfrac{2\btheta_n\balpha_n}{\theta_n}z_{n-1}-\tfrac{\ttheta_n\alpha_n}{\theta_n}y_{n-1}}_M^2\\
    &\quad+ \tfrac{\mu_{n}\gamma_{n}\hbeta}{2}\norm{p_{n}}_M^2 + 2\inpr{p_n}{\tfrac{\mu_{n}\gamma_{n}\hbeta}{2}\prt{-y_{n}-(p_{n-1}-y_{n-1})}}_M\\
    &\quad+ \tfrac{\mu_{n}\gamma_{n}\hbeta}{2}\norm{-y_{n}-(p_{n-1}-y_{n-1})}_M^2 - \tfrac{\lambda_{n}\gamma_{n}\alpha_{n}\hbeta}{2}\norm{y_{n-1}-p_{n-1}}_M^2\\
    &\quad + \tfrac{\lambda_n\gamma_n\hbeta}{2}\norm{p_n}_M^2 - 2\inpr{p_n}{\tfrac{\lambda_n\gamma_n\hbeta}{2}y_n}_M + \tfrac{\lambda_n\gamma_n\hbeta}{2}\norm{y_n}_M^2\\
    &\quad- \tfrac{\ttheta_{n}(\lambda_{n}+\mu_{n})}{\htheta_{n}}\norm{y_n-(1-\alpha_n)x_n-\alpha_ny_{n-1}}_{M}^2\\
    &\quad- \tfrac{\htheta_{n}(\lambda_{n}+\mu_{n})}{\theta_{n}}\Big\Vert z_n-\tfrac{\btheta_n\gamma_n\hbeta}{\htheta_n}y_n-\tfrac{\prt{2-\gamma_n\hbeta}\lambda_n}{\htheta_n}x_n\\
    &\hspace{30mm} +(\balpha_n-\alpha_n)p_{n-1}-\balpha_nz_{n-1}+\tfrac{\alpha_n\btheta_n\gamma_n\hbeta}{\htheta_n}y_{n-1}\Big\Vert_{M}^2\\
    &= \lp\lambda_n^2-\tfrac{\prt{4-\gamma_n\hbeta}\prt{\lambda_n+\mu_n}}{2}+\tfrac{1}{2}\theta_n\rp\norm{p_n}_M^2 \\
    &\quad+ 2\inpr{p_n}{\prt{\lambda_n+\mu_n-\lambda_n^2-\btheta_n}z_n + \prt{\tfrac{\ttheta_n}{2}-\tfrac{\prt{\lambda_n+\mu_n}\gamma_n\hbeta}{2}}y_n}_M\\
    &\quad+ 2\inpr{p_n}{\prt{\lambda_n-\lambda_n}x_n + \prt{(1-\lambda_n)\lambda_n\balpha_n+\mu_n-\omega_n-\tfrac{1}{2}\ptheta_n-\tfrac{\mu_n\gamma_n\hbeta}{2}}p_{n-1}}_M\\
    &\quad+ 2\inpr{p_n}{\prt{\lambda_n^2\balpha_n-\lambda_n\balpha_n+\omega_n+\btheta_n\balpha_n}z_{n-1} + \prt{-\tfrac{1}{2}\ttheta_n\alpha_n+\tfrac{\mu_n\gamma_n\hbeta}{2}}y_{n-1}}_M\\
    &\quad+ \lambda_n^2\norm{z_n-\balpha_nz_{n-1}+\balpha_np_{n-1}}_M^2 + 2\inpr{x_n}{\lambda_n\prt{-z_n-\balpha_np_{n-1}+\balpha_nz_{n-1}}}_M\\
    &\quad+ 2\inpr{\mu_{n}z_{n}+\omega_n(z_{n-1}-p_{n-1})}{-p_{n-1}}_M\\
    &\quad+ \tfrac{1}{2}\theta_n\norm{-\tfrac{2\btheta_n}{\theta_n}z_n+\tfrac{\ttheta_n}{\theta_n}y_n-\tfrac{2\lambda_n}{\theta_n}x_n-\tfrac{\ptheta_n}{\theta_n}p_{n-1}+\tfrac{2\btheta_n\balpha_n}{\theta_n}z_{n-1}-\tfrac{\ttheta_n\alpha_n}{\theta_n}y_{n-1}}_M^2\\
    &\quad+ \tfrac{\mu_{n}\gamma_{n}\hbeta}{2}\norm{-y_{n}-(p_{n-1}-y_{n-1})}_M^2 - \tfrac{\lambda_{n}\gamma_{n}\alpha_{n}\hbeta}{2}\norm{y_{n-1}-p_{n-1}}_M^2\\
    &\quad+ \tfrac{\lambda_n\gamma_n\hbeta}{2}\norm{y_n}_M^2 - \tfrac{\ttheta_{n}(\lambda_{n}+\mu_{n})}{\htheta_{n}}\norm{y_n-(1-\alpha_n)x_n-\alpha_ny_{n-1}}_{M}^2\\
    &\quad- \tfrac{\htheta_{n}(\lambda_{n}+\mu_{n})}{\theta_{n}}\Big\Vert{}z_n-\tfrac{\btheta_n\gamma_n\hbeta}{\htheta_n}y_n-\tfrac{\prt{2-\gamma_n\hbeta}\lambda_n}{\htheta_n}x_n\\
    &\hspace{30mm}+(\balpha_n-\alpha_n)p_{n-1}-\balpha_nz_{n-1}+\tfrac{\alpha_n\btheta_n\gamma_n\hbeta}{\htheta_n}y_{n-1}\Big\Vert_{M}^2.
\end{align*}
All terms involving $p_n$ in this expression are identically zero since their coefficients become zero. This is for most terms straightforward to show by substituting $\theta_n$, $\btheta_n$, $\ttheta_n$, $\ptheta_n$, $\alpha_n$, and $\balpha_n$ defined in \cref{alg:main} into the corresponding coefficients. We show this for two coefficients for which it is less obvious. For the coefficient of $\inpr{p_n}{p_{n-1}}_M$ we have
\begin{align*}
    -\lambda_n^2\balpha_n&+\lambda_n\balpha_n+\mu_n-\omega_n-\tfrac{1}{2}\ptheta_n-\tfrac{\mu_n\gamma_n\hbeta}{2}\\ 
    &= -\lambda_n^2\balpha_n+\lambda_n\balpha_n+\mu_n-\prt{\lambda_n\balpha_n-\tfrac{\gamma_n}{\gamma_{n-1}}\mu_n}-\tfrac{1}{2}\ptheta_n-\tfrac{\mu_n\gamma_n\hbeta}{2}\\
    &= -\lambda_n^2\balpha_n+\tfrac{\gamma_n}{\gamma_{n-1}}\mu_n+\tfrac{\prt{2-\gamma_n\hbeta}\mu_n}{2}-\tfrac{1}{2}\ptheta_n\\
    &= -\lambda_n^2\balpha_n+(\lambda_n+\mu_n)\balpha_n+\tfrac{\prt{2-\gamma_n\hbeta}\mu_n}{2}-\tfrac{1}{2}\ptheta_n\\
    &= \btheta_n\balpha_n+\tfrac{\prt{2-\gamma_n\hbeta}\mu_n}{2}-\tfrac{1}{2}\ptheta_n=\tfrac{1}{2}\ptheta_n-\tfrac{1}{2}\ptheta_n=0,
\end{align*}
where in the first equality $\omega_n$ is substituted from \eqref{eq:omega} and in the third equality the definition of $\balpha_n$ is used. For the coefficient of $\inpr{p_n}{z_{n-1}}_M$
\begin{align*}
    \lambda_n^2\balpha_n&-\lambda_n\balpha_n+\omega_n+\btheta_n\balpha_n\\
    &= \lambda_n^2\balpha_n-\lambda_n\balpha_n+\lambda_n\balpha_n-\tfrac{\gamma_n}{\gamma_{n-1}}\mu_n+\btheta_n\balpha_n\\
    &= \lambda_n^2\balpha_n-\prt{\lambda_n+\mu_n}\balpha_n+\btheta_n\balpha_n= (\btheta_n-\btheta_n)\balpha_n=0.
\end{align*}
Next, for the terms containing $z_n$, we do a similar procedure of expanding, reordering, and recollecting the terms  as we did for $p_n$. This gives
\begin{align*}
    \Delta_n &= \lambda_n^2\norm{z_n-\balpha_nz_{n-1}+\balpha_np_{n-1}}_M^2 + 2\inpr{x_n}{\lambda_n\prt{-z_n-\balpha_np_{n-1}+\balpha_nz_{n-1}}}_M\\
    &\quad+ 2\inpr{\mu_{n}z_{n}+\omega_n(z_{n-1}-p_{n-1})}{-p_{n-1}}_M\\
    &\quad+ \tfrac{1}{2}\theta_n\norm{-\tfrac{2\btheta_n}{\theta_n}z_n+\tfrac{\ttheta_n}{\theta_n}y_n-\tfrac{2\lambda_n}{\theta_n}x_n-\tfrac{\ptheta_n}{\theta_n}p_{n-1}+\tfrac{2\btheta_n\balpha_n}{\theta_n}z_{n-1}-\tfrac{\ttheta_n\alpha_n}{\theta_n}y_{n-1}}_M^2\\
    &\quad+ \tfrac{\mu_{n}\gamma_{n}\hbeta}{2}\norm{-y_{n}-(p_{n-1}-y_{n-1})}_M^2 - \tfrac{\lambda_{n}\gamma_{n}\alpha_{n}\hbeta}{2}\norm{y_{n-1}-p_{n-1}}_M^2\\
    &\quad+ \tfrac{\lambda_n\gamma_n\hbeta}{2}\norm{y_n}_M^2 - \tfrac{\ttheta_{n}(\lambda_{n}+\mu_{n})}{\htheta_{n}}\norm{y_n-(1-\alpha_n)x_n-\alpha_ny_{n-1}}_{M}^2\\
    &\quad- \tfrac{\htheta_{n}(\lambda_{n}+\mu_{n})}{\theta_{n}}\Big\Vert{}z_n-\tfrac{\btheta_n\gamma_n\hbeta}{\htheta_n}y_n-\tfrac{\prt{2-\gamma_n\hbeta}\lambda_n}{\htheta_n}x_n\\
    &\hspace{30mm}+(\balpha_n-\alpha_n)p_{n-1}-\balpha_nz_{n-1}+\tfrac{\alpha_n\btheta_n\gamma_n\hbeta}{\htheta_n}y_{n-1}\Big\Vert_{M}^2\\
    &= \lambda_n^2\norm{z_n}_M^2 + 2\inpr{z_n}{\lambda_n^2\prt{-\balpha_nz_{n-1}+\balpha_np_{n-1}}}_M +  \lambda_n^2\norm{-\balpha_nz_{n-1}+\balpha_np_{n-1}}_M^2\\
    &\quad+ 2\inpr{z_n}{-\lambda_nx_n}_M + 2\inpr{x_n}{\lambda_n\prt{-\balpha_np_{n-1}+\balpha_nz_{n-1}}}_M\\
    &\quad+ 2\inpr{z_{n}}{-\mu_{n}p_{n-1}}_M + 2\inpr{p_{n-1}}{-\omega_n(z_{n-1}-p_{n-1})}_M + \tfrac{2\btheta_n^2}{\theta_n}\norm{z_n}_M^2\\
    &\quad + 2\inpr{z_n}{-\btheta_n\prt{\tfrac{\ttheta_n}{\theta_n}y_n-\tfrac{2\lambda_n}{\theta_n}x_n-\tfrac{\ptheta_n}{\theta_n}p_{n-1}+\tfrac{2\btheta_n\balpha_n}{\theta_n}z_{n-1}-\tfrac{\ttheta_n\alpha_n}{\theta_n}y_{n-1}}}_M \\
    &\quad+ \tfrac{1}{2}\theta_n\norm{\tfrac{\ttheta_n}{\theta_n}y_n-\tfrac{2\lambda_n}{\theta_n}x_n-\tfrac{\ptheta_n}{\theta_n}p_{n-1}+\tfrac{2\btheta_n\balpha_n}{\theta_n}z_{n-1}-\tfrac{\ttheta_n\alpha_n}{\theta_n}y_{n-1}}_M^2\\
    &\quad+ \tfrac{\mu_{n}\gamma_{n}\hbeta}{2}\norm{-y_{n}-(p_{n-1}-y_{n-1})}_M^2 - \tfrac{\lambda_{n}\gamma_{n}\alpha_{n}\hbeta}{2}\norm{y_{n-1}-p_{n-1}}_M^2\\
    &\quad+ \tfrac{\lambda_n\gamma_n\hbeta}{2}\norm{y_n}_M^2 - \tfrac{\ttheta_{n}(\lambda_{n}+\mu_{n})}{\htheta_{n}}\norm{y_n-(1-\alpha_n)x_n-\alpha_ny_{n-1}}_{M}^2\\
    &\quad- \tfrac{\htheta_{n}(\lambda_{n}+\mu_{n})}{\theta_{n}}\norm{z_n}_{M}^2 + 2\inpr{z_n}{-\tfrac{\htheta_{n}(\lambda_{n}+\mu_{n})}{\theta_{n}}\prt{-\tfrac{\btheta_n\gamma_n\hbeta}{\htheta_n}y_n-\tfrac{\prt{2-\gamma_n\hbeta}\lambda_n}{\htheta_n}x_n}}_M\\
    &\quad+ 2\inpr{z_n}{-\tfrac{\htheta_{n}(\lambda_{n}+\mu_{n})}{\theta_{n}}\prt{(\balpha_n-\alpha_n)p_{n-1}-\balpha_nz_{n-1}+\tfrac{\alpha_n\btheta_n\gamma_n\hbeta}{\htheta_n}y_{n-1}}}_M\\
    &\quad- \tfrac{\htheta_{n}(\lambda_{n}+\mu_{n})}{\theta_{n}}\Big\Vert{}-\tfrac{\btheta_n\gamma_n\hbeta}{\htheta_n}y_n-\tfrac{\prt{2-\gamma_n\hbeta}\lambda_n}{\htheta_n}x_n\\
    &\hspace{30mm}+(\balpha_n-\alpha_n)p_{n-1}-\balpha_nz_{n-1}+\tfrac{\alpha_n\btheta_n\gamma_n\hbeta}{\htheta_n}y_{n-1}\Big\Vert_{M}^2\\
    &= \tfrac{\lambda_n^2\theta_n+2\btheta_n^2-\htheta_{n}(\lambda_{n}+\mu_{n})}{\theta_n}\norm{z_n}_M^2 + 2\inpr{z_n}{\tfrac{\btheta_n\prt{\gamma_n\hbeta(\lambda_n+\mu_n)-\ttheta_n}}{\theta_n}y_n}_M\\
    &\quad+ 2\inpr{z_n}{\tfrac{2\btheta_n\lambda_n-\lambda_n\theta_n+\prt{2-\gamma_n\hbeta}(\lambda_n+\mu_n)\lambda_n}{\theta_n}x_n}_M\\
    &\quad+ 2\inpr{z_n}{\tfrac{\lambda_n^2\balpha_n\theta_n-\mu_n\theta_n+\btheta_n\ptheta_n-\htheta_n(\lambda_n+\mu_n)(\balpha_n-\alpha_n)}{\theta_n}p_{n-1}}_M\\
    &\quad+ 2\inpr{z_n}{\tfrac{\balpha_n\prt{\htheta_n(\lambda_n+\mu_n)-2\btheta_n^2-\lambda_n^2\theta_n}}{\theta_n}z_{n-1} + \tfrac{\alpha_n\btheta_n\prt{\ttheta_n-\gamma_n\hbeta(\lambda_n+\mu_n)}}{\theta_n}y_{n-1}}_M\\
    &\quad+ \lambda_n^2\norm{-\balpha_nz_{n-1}+\balpha_np_{n-1}}_M^2 + 2\inpr{x_n}{\lambda_n\prt{-\balpha_np_{n-1}+\balpha_nz_{n-1}}}_M\\
    &\quad+ 2\inpr{p_{n-1}}{-\omega_n(z_{n-1}-p_{n-1})}_M\\
    &\quad+ \tfrac{1}{2}\theta_n\norm{\tfrac{\ttheta_n}{\theta_n}y_n-\tfrac{2\lambda_n}{\theta_n}x_n-\tfrac{\ptheta_n}{\theta_n}p_{n-1}+\tfrac{2\btheta_n\balpha_n}{\theta_n}z_{n-1}-\tfrac{\ttheta_n\alpha_n}{\theta_n}y_{n-1}}_M^2\\
    &\quad+ \tfrac{\mu_{n}\gamma_{n}\hbeta}{2}\norm{-y_{n}-p_{n-1}+y_{n-1}}_M^2 - \tfrac{\lambda_{n}\gamma_{n}\alpha_{n}\hbeta}{2}\norm{y_{n-1}-p_{n-1}}_M^2\\
    &\quad+ \tfrac{\lambda_n\gamma_n\hbeta}{2}\norm{y_n}_M^2 - \tfrac{\ttheta_{n}(\lambda_{n}+\mu_{n})}{\htheta_{n}}\norm{y_n-(1-\alpha_n)x_n-\alpha_ny_{n-1}}_{M}^2\\
    &\quad- \tfrac{\htheta_{n}(\lambda_{n}+\mu_{n})}{\theta_{n}}\Big\Vert{}-\tfrac{\btheta_n\gamma_n\hbeta}{\htheta_n}y_n-\tfrac{\prt{2-\gamma_n\hbeta}\lambda_n}{\htheta_n}x_n\\
    &\hspace{30mm}+(\balpha_n-\alpha_n)p_{n-1}-\balpha_nz_{n-1}+\tfrac{\alpha_n\btheta_n\gamma_n\hbeta}{\htheta_n}y_{n-1}\Big\Vert_{M}^2
\end{align*}
Now, we show that all the coefficients of the terms containing $z_n$ are identical to zero. The coefficients of $\norm{z_n}_M^2$ and $\inpr{z_n}{z_{n-1}}_M$ are zero by \labelcref{itm:prop-theta-iii}. For the coefficient of $\inpr{z_n}{x_n}_M$ we have
\begin{align*}
    2\btheta_n\lambda_n-\lambda_n\theta_n+\prt{2-\gamma_n\hbeta}(\lambda_n+\mu_n)\lambda_n = \lambda_n\prt{2\btheta_n+\prt{2-\gamma_n\hbeta}(\lambda_n+\mu_n)-\theta_n}
\end{align*}
which is identical to zero by \labelcref{itm:prop-theta-ii}. For the coefficient of $\inpr{z_n}{p_{n-1}}_M$ we have
\begin{align*}
    &\lambda_n^2\balpha_n\theta_n-\mu_n\theta_n+\btheta_n\ptheta_n-\htheta_n(\lambda_n+\mu_n)(\balpha_n-\alpha_n)\\
    &\quad= \lambda_n^2\balpha_n\theta_n-\mu_n\theta_n+\btheta_n\prt{\prt{2-\gamma_n\hbeta}\mu_n+2\balpha_n\btheta_n}-\htheta_n(\lambda_n+\mu_n)(\balpha_n-\alpha_n)\\
    &\quad= \balpha_n\prt{\lambda_n^2\theta_n+2\btheta_n^2-(\lambda_n+\mu_n)\htheta_n}-\mu_n\theta_n\\
    &\qquad+\prt{2-\gamma_n\hbeta}\mu_n\btheta_n+(\lambda_n+\mu_n)\alpha_n\htheta_n\\
    &\quad= \prt{-\theta_n+\prt{2-\gamma_n\hbeta}\btheta_n+\htheta_n}\mu_n = 0
\end{align*}
where in the first equality, $\ptheta_n$ is substituted and in the third equality \labelcref{itm:prop-theta-iii} is used and in the last equality \labelcref{itm:prop-theta-i} is used. Therefore, all terms containing $z_n$ can be eliminated from $\Delta_n$ and we are left with 
\begin{align*}
    &\Delta_n =\\
    &\quad \lambda_n^2\norm{-\balpha_nz_{n-1}+\balpha_np_{n-1}}_M^2 + 2\inpr{x_n}{\lambda_n\prt{-\balpha_np_{n-1}+\balpha_nz_{n-1}}}_M\\
    &\quad+ 2\inpr{p_{n-1}}{-\omega_n(z_{n-1}-p_{n-1})}_M\\
    &\quad+ \tfrac{1}{2}\theta_n\norm{\tfrac{\ttheta_n}{\theta_n}y_n-\tfrac{2\lambda_n}{\theta_n}x_n-\tfrac{\ptheta_n}{\theta_n}p_{n-1}+\tfrac{2\btheta_n\balpha_n}{\theta_n}z_{n-1}-\tfrac{\ttheta_n\alpha_n}{\theta_n}y_{n-1}}_M^2\\
    &\quad+ \tfrac{\mu_{n}\gamma_{n}\hbeta}{2}\norm{-y_{n}-p_{n-1}+y_{n-1}}_M^2 - \tfrac{\lambda_{n}\gamma_{n}\alpha_{n}\hbeta}{2}\norm{y_{n-1}-p_{n-1}}_M^2\\
    &\quad+ \tfrac{\lambda_n\gamma_n\hbeta}{2}\norm{y_n}_M^2 - \tfrac{\ttheta_{n}(\lambda_{n}+\mu_{n})}{\htheta_{n}}\norm{y_n-(1-\alpha_n)x_n-\alpha_ny_{n-1}}_{M}^2\\
    &\quad- \tfrac{\htheta_{n}(\lambda_{n}+\mu_{n})}{\theta_{n}}\Big\Vert{}-\tfrac{\btheta_n\gamma_n\hbeta}{\htheta_n}y_n-\tfrac{\prt{2-\gamma_n\hbeta}\lambda_n}{\htheta_n}x_n\\
    &\hspace{30mm}+(\balpha_n-\alpha_n)p_{n-1}-\balpha_nz_{n-1}+\tfrac{\alpha_n\btheta_n\gamma_n\hbeta}{\htheta_n}y_{n-1}\Big\Vert_{M}^2\\
    &= \lambda_n^2\norm{-\balpha_nz_{n-1}+\balpha_np_{n-1}}_M^2 + 2\inpr{x_n}{\lambda_n\prt{-\balpha_np_{n-1}+\balpha_nz_{n-1}}}_M\\
    &\quad+ 2\inpr{p_{n-1}}{-\omega_n(z_{n-1}-p_{n-1})}_M + \tfrac{\ttheta_n^2}{2\theta_n}\norm{y_n}_M^2\\
    &\quad + 2\inpr{y_n}{\tfrac{\ttheta_n}{2}\prt{-\tfrac{2\lambda_n}{\theta_n}x_n-\tfrac{\ptheta_n}{\theta_n}p_{n-1}+\tfrac{2\btheta_n\balpha_n}{\theta_n}z_{n-1}-\tfrac{\ttheta_n\alpha_n}{\theta_n}y_{n-1}}}_M\\
    &\quad+ \tfrac{1}{2}\theta_n\norm{-\tfrac{2\lambda_n}{\theta_n}x_n-\tfrac{\ptheta_n}{\theta_n}p_{n-1}+\tfrac{2\btheta_n\balpha_n}{\theta_n}z_{n-1}-\tfrac{\ttheta_n\alpha_n}{\theta_n}y_{n-1}}_M^2\\
    &\quad+ \tfrac{\mu_{n}\gamma_{n}\hbeta}{2}\norm{y_{n}}_M^2 + 2\inpr{y_n}{\tfrac{\mu_{n}\gamma_{n}\hbeta}{2}\prt{p_{n-1}-y_{n-1}}}_M + \tfrac{\mu_{n}\gamma_{n}\hbeta}{2}\norm{p_{n-1}-y_{n-1}}_M^2 \\
    &\quad+ \tfrac{\lambda_n\gamma_n\hbeta}{2}\norm{y_n}_M^2 - \tfrac{\ttheta_{n}(\lambda_{n}+\mu_{n})}{\htheta_{n}}\norm{y_n}_{M}^2\\
    &\quad- 2\inpr{y_n}{\tfrac{\ttheta_{n}(\lambda_{n}+\mu_{n})}{\htheta_{n}}\prt{(\alpha_n-1)x_n-\alpha_ny_{n-1}}}_M \\
    &\quad- \tfrac{\ttheta_{n}(\lambda_{n}+\mu_{n})}{\htheta_{n}}\norm{(\alpha_n-1)x_n-\alpha_ny_{n-1}}_{M}^2 - \tfrac{\lambda_{n}\gamma_{n}\alpha_{n}\hbeta}{2}\norm{y_{n-1}-p_{n-1}}_M^2\\
    &\quad- \tfrac{\btheta_n^2\gamma_n^2\hbeta^2(\lambda_{n}+\mu_{n})}{\htheta_{n}\theta_n}\norm{y_n}_{M}^2 + 2\inpr{y_n}{\tfrac{\btheta_n\gamma_n\hbeta(\lambda_{n}+\mu_{n})}{\theta_n}\prt{-\tfrac{\prt{2-\gamma_n\hbeta}\lambda_n}{\htheta_n}x_n}}_M\\
    &\quad+ 2\inpr{y_n}{\tfrac{\btheta_n\gamma_n\hbeta(\lambda_{n}+\mu_{n})}{\theta_n}\prt{(\balpha_n-\alpha_n)p_{n-1}-\balpha_nz_{n-1}+\tfrac{\alpha_n\btheta_n\gamma_n\hbeta}{\htheta_n}y_{n-1}}}_M\\
    &\quad- \tfrac{\htheta_{n}(\lambda_{n}+\mu_{n})}{\theta_{n}}\norm{-\tfrac{\prt{2-\gamma_n\hbeta}\lambda_n}{\htheta_n}x_n+(\balpha_n-\alpha_n)p_{n-1}-\balpha_nz_{n-1}+\tfrac{\alpha_n\btheta_n\gamma_n\hbeta}{\htheta_n}y_{n-1}}_{M}^2\\
    &= \prt{\tfrac{\ttheta_n^2}{2\theta_n}+\tfrac{\mu_n\gamma_n\hbeta}{2}+\tfrac{\lambda_n\gamma_n\hbeta}{2}-\tfrac{\ttheta_{n}(\lambda_{n}+\mu_{n})}{\htheta_{n}}-\tfrac{\btheta_n^2\gamma_n^2\hbeta^2(\lambda_{n}+\mu_{n})}{\htheta_{n}\theta_n}}\norm{y_n}_M^2\\
    &\quad+ 2\inpr{y_n}{\prt{-\tfrac{\lambda_n\ttheta_n}{\theta_n}+\tfrac{\ttheta_{n}(\lambda_{n}+\mu_{n})(1-\alpha_n)}{\htheta_{n}}-\tfrac{\btheta_n\lambda_n\gamma_n\hbeta(\lambda_{n}+\mu_{n})\prt{2-\gamma_n\hbeta}}{\theta_n\htheta_{n}}}x_n}_M\\
    &\quad+ 2\inpr{y_n}{\prt{-\tfrac{\ttheta_n\ptheta_n}{2\theta_n}+\tfrac{\mu_{n}\gamma_{n}\hbeta}{2}+\tfrac{\btheta_n\gamma_n\hbeta(\lambda_{n}+\mu_{n})(\balpha_n-\alpha_n)}{\theta_{n}}}p_{n-1}}_M\\
    &\quad+ 2\inpr{y_n}{\prt{\tfrac{\ttheta_n\btheta_n\balpha_n}{\theta_n}-\tfrac{\btheta_n\balpha_n\gamma_n\hbeta(\lambda_{n}+\mu_{n})}{\theta_{n}}}z_{n-1}}_M\\
    &\quad+ 2\inpr{y_n}{\prt{-\tfrac{\ttheta_n^2\alpha_n}{2\theta_n}-\tfrac{\mu_{n}\gamma_{n}\hbeta}{2}+\tfrac{\ttheta_{n}\alpha_n(\lambda_{n}+\mu_{n})}{\htheta_{n}}+\tfrac{\alpha_n\btheta_n^2\gamma_n^2\hbeta^2(\lambda_{n}+\mu_{n})}{\theta_n\htheta_{n}}}y_{n-1}}_M\\
    &\quad+ \lambda_n^2\norm{-\balpha_nz_{n-1}+\balpha_np_{n-1}}_M^2 + 2\inpr{x_n}{\lambda_n\prt{-\balpha_np_{n-1}+\balpha_nz_{n-1}}}_M\\
    &\quad+ \tfrac{\mu_{n}\gamma_{n}\hbeta}{2}\norm{p_{n-1}-y_{n-1}}_M^2 + 2\inpr{p_{n-1}}{-\omega_n(z_{n-1}-p_{n-1})}_M\\
    &\quad+ \tfrac{1}{2}\theta_n\norm{-\tfrac{2\lambda_n}{\theta_n}x_n-\tfrac{\ptheta_n}{\theta_n}p_{n-1}+\tfrac{2\btheta_n\balpha_n}{\theta_n}z_{n-1}-\tfrac{\ttheta_n\alpha_n}{\theta_n}y_{n-1}}_M^2\\
    &\quad- \tfrac{\ttheta_{n}(\lambda_{n}+\mu_{n})}{\htheta_{n}}\norm{(\alpha_n-1)x_n-\alpha_ny_{n-1}}_{M}^2 - \tfrac{\lambda_{n}\gamma_{n}\alpha_{n}\hbeta}{2}\norm{y_{n-1}-p_{n-1}}_M^2\\
    &\quad- \tfrac{\htheta_{n}(\lambda_{n}+\mu_{n})}{\theta_{n}}\norm{-\tfrac{\prt{2-\gamma_n\hbeta}\lambda_n}{\htheta_n}x_n+(\balpha_n-\alpha_n)p_{n-1}-\balpha_nz_{n-1}+\tfrac{\alpha_n\btheta_n\gamma_n\hbeta}{\htheta_n}y_{n-1}}_{M}^2
\end{align*}
Now, we show that all the coefficients of the terms containing $y_n$ are identically zero. For the coefficient of $\norm{y_n}_M^2$ we have
\begin{align*}
    &\prt{\theta_n\htheta_n\gamma_n\hbeta-2\theta_n\ttheta_{n}-2\btheta_n^2\gamma_n^2\hbeta^2}(\lambda_n+\mu_n)+\ttheta_n^2\htheta_n\\
    &\quad= \prt{\theta_n\gamma_n\hbeta(2\lambda_n+2\mu_n-\lambda_n^2\gamma_n\hbeta)-2\theta_n\gamma_n\hbeta(\lambda_n+\mu_n)-2\btheta_n^2\gamma_n^2\hbeta^2}(\lambda_n+\mu_n)\\
    &\qquad+\ttheta_n^2\htheta_n\\
    &\quad= -\prt{\theta_n\lambda_n^2+2\btheta_n^2}(\lambda_n+\mu_n)\gamma_n^2\hbeta^2+(\lambda_n+\mu_n)^2\gamma_n^2\hbeta^2\htheta_n\\
    &\quad= \prt{(\lambda_n+\mu_n)\htheta_n-\theta_n\lambda_n^2-2\btheta_n^2}(\lambda_n+\mu_n)\gamma_n^2\hbeta^2,
\end{align*}
which, by \labelcref{itm:prop-theta-iii}, is identical to zero. Now, for the coefficient of $\inpr{y_n}{x_n}_M$ we have
\begin{align*}
    &\ttheta_{n}\theta_{n}(\lambda_{n}+\mu_{n})(1-\alpha_n)-\lambda_n\ttheta_n\htheta_{n}-\btheta_n\lambda_n\gamma_n\hbeta(\lambda_{n}+\mu_{n})\prt{2-\gamma_n\hbeta}\\
    &\qquad= \prt{\theta_{n}(\lambda_{n}+\mu_{n})(1-\alpha_n)-\lambda_n\htheta_{n}-\btheta_n\lambda_n\prt{2-\gamma_n\hbeta}}\ttheta_n\\
    &\qquad= \prt{\theta_{n}-\htheta_{n}-\btheta_n\prt{2-\gamma_n\hbeta}}\lambda_n\ttheta_n
\end{align*}
which is identically zero by \labelcref{itm:prop-theta-i}. For the coefficient of $\inpr{y_n}{p_{n-1}}_M$ we have
\begin{align*}
    &\mu_{n}\gamma_{n}\hbeta\theta_n+2\btheta_n\gamma_n\hbeta(\lambda_{n}+\mu_{n})(\balpha_n-\alpha_n)-\ttheta_n\ptheta_n\\
    &\qquad= \mu_{n}\gamma_{n}\hbeta\theta_n+2\btheta_n\ttheta_n(\balpha_n-\alpha_n)-\ttheta_n\prt{\prt{2-\gamma_n\hbeta}\mu_n+2\balpha_n\btheta_n}\\
    &\qquad= \mu_{n}\gamma_{n}\hbeta\theta_n+2\btheta_n\ttheta_n\balpha_n-2\btheta_n\ttheta_n\alpha_n-\prt{2-\gamma_n\hbeta}\mu_n\ttheta_n-2\balpha_n\btheta_n\ttheta_n\\
    &\qquad= \mu_{n}\gamma_{n}\hbeta\theta_n-2\btheta_n\ttheta_n\alpha_n-\prt{2-\gamma_n\hbeta}\mu_n\ttheta_n\\
    &\qquad= \mu_{n}\gamma_{n}\hbeta\theta_n-2\btheta_n\mu_{n}\gamma_{n}\hbeta-\prt{2-\gamma_n\hbeta}(\lambda_n+\mu_n)\mu_n\gamma_{n}\hbeta\\
    &\qquad= \mu_{n}\gamma_{n}\hbeta\prt{\theta_n-2\btheta_n-\prt{2-\gamma_n\hbeta}(\lambda_n+\mu_n)}\\
    &\qquad= \mu_{n}\gamma_{n}\hbeta\prt{\theta_n-2\prt{\lambda_n+\mu_n-\lambda_n^2}-\prt{2-\gamma_n\hbeta}(\lambda_n+\mu_n)}\\
    &\qquad= \mu_{n}\gamma_{n}\hbeta\prt{\theta_n+2\lambda_n^2-(4-\gamma_n\hbeta)(\lambda_n+\mu_n)}
\end{align*}
which is equal to zero by the definition of $\theta_n$. The equivalence of the coefficient of $\inpr{y_n}{z_{n-1}}_M$ to zero follows from the definition of $\ttheta_n$. For the coefficient of $\inpr{y_n}{y_{n-1}}_M$ we have
\begin{align*}
    &2\alpha_n\btheta_n^2\gamma_n^2\hbeta^2(\lambda_{n}+\mu_{n})-\ttheta_n^2\htheta_{n}\alpha_n+2\theta_n\ttheta_{n}\alpha_n(\lambda_{n}+\mu_{n})-\mu_{n}\gamma_{n}\hbeta\theta_n\htheta_{n}\\
    &\qquad= 2\alpha_n\btheta_n^2\gamma_n\hbeta\ttheta_n-\ttheta_n^2\htheta_{n}\alpha_n+2\theta_n\ttheta_{n}\alpha_n(\lambda_{n}+\mu_{n})-\alpha_n\ttheta_n\theta_n\htheta_{n}\\
    &\qquad= \alpha_n\ttheta_n\prt{2\btheta_n^2\gamma_n\hbeta-\ttheta_n\htheta_{n}+2\theta_n(\lambda_{n}+\mu_{n})-\theta_n\htheta_{n}}\\
    &\qquad= \alpha_n\ttheta_n\prt{2\btheta_n^2\gamma_n\hbeta-\ttheta_n\htheta_{n}+\theta_n\prt{2(\lambda_{n}+\mu_{n})-2(\lambda_{n}+\mu_{n})+\lambda_n^2\gamma_n\hbeta}}\\
    &\qquad= \alpha_n\ttheta_n\prt{2\btheta_n^2\gamma_n\hbeta-\ttheta_n\htheta_{n}+\theta_n\lambda_n^2\gamma_n\hbeta}\\
    &\qquad= \alpha_n\ttheta_n\prt{2\btheta_n^2\gamma_n\hbeta-\htheta_{n}\gamma_n\hbeta(\lambda_n+\mu_n)+\theta_n\lambda_n^2\gamma_n\hbeta}\\
    &\qquad= \alpha_n\ttheta_n\gamma_n\hbeta\prt{2\btheta_n^2-\htheta_{n}(\lambda_n+\mu_n)+\theta_n\lambda_n^2}
\end{align*}
which by \labelcref{itm:prop-theta-iii} is identical to zero. Therefore, all the coefficients of the terms containing $y_n$ are zero and we can eliminate those terms. The remaining terms are
\begin{align*}
    &\Delta_n =\\
    &\quad\lambda_n^2\norm{-\balpha_nz_{n-1}+\balpha_np_{n-1}}_M^2 + 2\inpr{x_n}{\lambda_n\prt{-\balpha_np_{n-1}+\balpha_nz_{n-1}}}_M\\
    &\quad+ \tfrac{\mu_{n}\gamma_{n}\hbeta}{2}\norm{p_{n-1}-y_{n-1}}_M^2 + 2\inpr{p_{n-1}}{-\omega_n(z_{n-1}-p_{n-1})}_M\\
    &\quad+ \tfrac{1}{2}\theta_n\norm{-\tfrac{2\lambda_n}{\theta_n}x_n-\tfrac{\ptheta_n}{\theta_n}p_{n-1}+\tfrac{2\btheta_n\balpha_n}{\theta_n}z_{n-1}-\tfrac{\ttheta_n\alpha_n}{\theta_n}y_{n-1}}_M^2\\
    &\quad- \tfrac{\ttheta_{n}(\lambda_{n}+\mu_{n})}{\htheta_{n}}\norm{(\alpha_n-1)x_n-\alpha_ny_{n-1}}_{M}^2 - \tfrac{\lambda_{n}\gamma_{n}\alpha_{n}\hbeta}{2}\norm{y_{n-1}-p_{n-1}}_M^2\\
    &\quad- \tfrac{\htheta_{n}(\lambda_{n}+\mu_{n})}{\theta_{n}}\norm{-\tfrac{\prt{2-\gamma_n\hbeta}\lambda_n}{\htheta_n}x_n+(\balpha_n-\alpha_n)p_{n-1}-\balpha_nz_{n-1}+\tfrac{\alpha_n\btheta_n\gamma_n\hbeta}{\htheta_n}y_{n-1}}_{M}^2\\
    &= \lambda_n^2\norm{-\balpha_nz_{n-1}+\balpha_np_{n-1}}_M^2 + 2\inpr{x_n}{\lambda_n\prt{-\balpha_np_{n-1}+\balpha_nz_{n-1}}}_M\\
    &\quad+ \tfrac{\mu_{n}\gamma_{n}\hbeta}{2}\norm{p_{n-1}-y_{n-1}}_M^2 + 2\inpr{p_{n-1}}{-\omega_n(z_{n-1}-p_{n-1})}_M\\
    &\quad+ \tfrac{2\lambda_n^2}{\theta_n}\norm{x_n}_M^2 + 2\inpr{x_n}{\lambda_n\prt{\tfrac{\ptheta_n}{\theta_n}p_{n-1}-\tfrac{2\btheta_n\balpha_n}{\theta_n}z_{n-1}+\tfrac{\ttheta_n\alpha_n}{\theta_n}y_{n-1}}}_M\\
    &\quad+ \tfrac{1}{2}\theta_n\norm{\tfrac{\ptheta_n}{\theta_n}p_{n-1}-\tfrac{2\btheta_n\balpha_n}{\theta_n}z_{n-1}+\tfrac{\ttheta_n\alpha_n}{\theta_n}y_{n-1}}_M^2 - \tfrac{\lambda_{n}\gamma_{n}\alpha_{n}\hbeta}{2}\norm{y_{n-1}-p_{n-1}}_M^2\\
    &\quad- \tfrac{\ttheta_{n}(\lambda_{n}+\mu_{n})(1-\alpha_n)^2}{\htheta_{n}}\norm{x_n}_{M}^2 + 2\inpr{x_n}{\tfrac{\ttheta_{n}\alpha_n(\lambda_{n}+\mu_{n})(\alpha_n-1)}{\htheta_{n}}y_{n-1}}_M\\
    &\quad- \tfrac{(\lambda_{n}+\mu_{n})\prt{2-\gamma_n\hbeta}^2\lambda_n^2}{\theta_n\htheta_n}\norm{x_n}_{M}^2 - \tfrac{\ttheta_{n}\alpha_n^2(\lambda_{n}+\mu_{n})}{\htheta_{n}}\norm{y_{n-1}}_{M}^2 \\
    &\quad+ 2\inpr{x_{n}}{\tfrac{\lambda_n\prt{2-\gamma_n\hbeta}(\lambda_{n}+\mu_{n})}{\theta_{n}}\prt{(\balpha_n-\alpha_n)p_{n-1}-\balpha_nz_{n-1}+\tfrac{\alpha_n\btheta_n\gamma_n\hbeta}{\htheta_n}y_{n-1}}}_M\\
    &\quad- \tfrac{\htheta_{n}(\lambda_{n}+\mu_{n})}{\theta_{n}}\norm{(\balpha_n-\alpha_n)p_{n-1}-\balpha_nz_{n-1}+\tfrac{\alpha_n\btheta_n\gamma_n\hbeta}{\htheta_n}y_{n-1}}_{M}^2\\
    &= \prt{\tfrac{2\lambda_n^2}{\theta_n}-\tfrac{\ttheta_{n}(\lambda_{n}+\mu_{n})(1-\alpha_n)^2}{\htheta_{n}}-\tfrac{(\lambda_{n}+\mu_{n})\prt{2-\gamma_n\hbeta}^2\lambda_n^2}{\theta_n\htheta_n}}\norm{x_n}_M^2\\
    &\quad+ 2\inpr{x_n}{\prt{\tfrac{\lambda_n\ptheta_n}{\theta_n}+\tfrac{\lambda_n\prt{2-\gamma_n\hbeta}(\lambda_{n}+\mu_{n})(\balpha_n-\alpha_n)}{\theta_{n}}-\lambda_n\balpha_n}p_{n-1}}_M\\
    &\quad+ 2\inpr{x_n}{\prt{-\tfrac{2\lambda_n\btheta_n\balpha_n}{\theta_n}-\tfrac{\lambda_n\balpha_n\prt{2-\gamma_n\hbeta}(\lambda_{n}+\mu_{n})}{\theta_{n}}+\lambda_n\balpha_n}z_{n-1}}_M\\
    &\quad+ 2\inpr{x_n}{\prt{\tfrac{\lambda_n\ttheta_n\alpha_n}{\theta_n}+\tfrac{\ttheta_{n}\alpha_n(\lambda_{n}+\mu_{n})(\alpha_n-1)}{\htheta_{n}}+\tfrac{\btheta_n\alpha_n\lambda_n\gamma_n\hbeta\prt{2-\gamma_n\hbeta}(\lambda_{n}+\mu_{n})}{\theta_n\htheta_{n}}}y_{n-1}}_M\\
    &\quad+ \lambda_n^2\norm{-\balpha_nz_{n-1}+\balpha_np_{n-1}}_M^2 - \tfrac{\ttheta_{n}\alpha_n^2(\lambda_{n}+\mu_{n})}{\htheta_{n}}\norm{y_{n-1}}_{M}^2\\
    &\quad+ \tfrac{\mu_{n}\gamma_{n}\hbeta}{2}\norm{p_{n-1}-y_{n-1}}_M^2 + 2\inpr{p_{n-1}}{-\omega_n(z_{n-1}-p_{n-1})}_M\\
    &\quad+ \tfrac{1}{2}\theta_n\norm{\tfrac{\ptheta_n}{\theta_n}p_{n-1}-\tfrac{2\btheta_n\balpha_n}{\theta_n}z_{n-1}+\tfrac{\ttheta_n\alpha_n}{\theta_n}y_{n-1}}_M^2 - \tfrac{\lambda_{n}\gamma_{n}\alpha_{n}\hbeta}{2}\norm{y_{n-1}-p_{n-1}}_M^2\\
    &\quad- \tfrac{\htheta_{n}(\lambda_{n}+\mu_{n})}{\theta_{n}}\norm{(\balpha_n-\alpha_n)p_{n-1}-\balpha_nz_{n-1}+\tfrac{\alpha_n\btheta_n\gamma_n\hbeta}{\htheta_n}y_{n-1}}_{M}^2
\end{align*}
We want to show that all the coefficients of the terms containing $x_n$ are zero. For the coefficient of $\norm{x_n}_M^2$ we have
\begin{equation}\label{eq:xn-squared-coefficient}
\begin{aligned}
    &2\lambda_n^2\htheta_n-(\lambda_{n}+\mu_{n})\prt{2-\gamma_n\hbeta}^2\lambda_n^2-\theta_n\ttheta_{n}(\lambda_{n}+\mu_{n})(1-\alpha_n)^2\\
    &\qquad= 2\lambda_n^2\htheta_n-(\lambda_{n}+\mu_{n})\prt{2-\gamma_n\hbeta}^2\lambda_n^2-\theta_n\lambda_n^2\gamma_n\hbeta\\
    &\qquad= \lambda_n^2\prt{2\htheta_n-(\lambda_{n}+\mu_{n})\prt{2-\gamma_n\hbeta}^2-\theta_n\gamma_n\hbeta}\\
    &\qquad= \lambda_n^2\prt{2\prt{2\lambda_n+2\mu_n-\gamma_n\hbeta\lambda_n^2}-(\lambda_{n}+\mu_{n})\prt{4-4\gamma_n\hbeta+\gamma_n^2\hbeta^2}-\theta_n\gamma_n\hbeta}\\
    &\qquad= \lambda_n^2\Bigg(-2\gamma_n\hbeta\lambda_n^2-(\lambda_{n}+\mu_{n})\prt{-4\gamma_n\hbeta+\gamma_n^2\hbeta^2}\\
    &\hspace{30mm}-\prt{(4-\gamma_n\hbeta)(\lambda_n+\mu_n)-2\lambda_n^2}\gamma_n\hbeta\bigg)\\
    &\qquad= \lambda_n^2\prt{-2\gamma_n\hbeta\lambda_n^2+2\lambda_n^2\gamma_n\hbeta}=0
\end{aligned}
\end{equation}
where in the first equality $\ttheta_n$ and $\alpha_n$ and in the third equality $\htheta_n$ are substituted by their definition from \cref{alg:main}. For the coefficient of $\inpr{x_n}{p_{n-1}}$ we have
\begin{align*}
    &\lambda_n\ptheta_n+\lambda_n\prt{2-\gamma_n\hbeta}(\lambda_{n}+\mu_{n})(\balpha_n-\alpha_n)-\lambda_n\balpha_n\theta_n\\
    &\quad= \lambda_n\prt{\prt{2-\gamma_n\hbeta}\mu_n+2\balpha_n\btheta_n}+\lambda_n\prt{2-\gamma_n\hbeta}(\lambda_{n}+\mu_{n})(\balpha_n-\alpha_n)-\lambda_n\balpha_n\theta_n\\
    &\quad= 2\lambda_n\balpha_n\btheta_n+\prt{2-\gamma_n\hbeta}(\lambda_{n}+\mu_{n})\lambda_n\balpha_n-\lambda_n\balpha_n\theta_n\\
    &\quad\quad-\prt{2-\gamma_n\hbeta}(\lambda_{n}+\mu_{n})\lambda_n\alpha_n+\prt{2-\gamma_n\hbeta}\lambda_n\mu_n\\
    &\quad= \lambda_n\balpha_n\prt{2\btheta_n+\prt{2-\gamma_n\hbeta}(\lambda_{n}+\mu_{n})-\theta_n}\\
    &\quad\quad-\prt{2-\gamma_n\hbeta}\lambda_n\mu_n+\prt{2-\gamma_n\hbeta}\lambda_n\mu_n
\end{align*}
which by
\labelcref{itm:prop-theta-ii} is zero. For the coefficient of $\inpr{x_n}{z_{n-1}}$ we have
\begin{align*}
    &\lambda_n\balpha_n\theta_n-2\lambda_n\btheta_n\balpha_n-\lambda_n\balpha_n\prt{2-\gamma_n\hbeta}(\lambda_{n}+\mu_{n})\\
    &\qquad= \lambda_n\balpha_n\prt{\theta_n-2\btheta_n-\prt{2-\gamma_n\hbeta}(\lambda_{n}+\mu_{n})}
\end{align*}
which is identically zero by \labelcref{itm:prop-theta-ii}. For the coefficient of $\inpr{x_n}{y_{n-1}}$ we have
\begin{align*}
    &\lambda_n\ttheta_n\alpha_n\htheta_{n}+\btheta_n\alpha_n\lambda_n\gamma_n\hbeta\prt{2-\gamma_n\hbeta}(\lambda_{n}+\mu_{n})-\theta_{n}\ttheta_n\alpha_n(\lambda_{n}+\mu_{n})(1-\alpha_n)\\
    &\qquad= \lambda_n\ttheta_n\alpha_n\htheta_{n}+\btheta_n\alpha_n\lambda_n\ttheta_n\prt{2-\gamma_n\hbeta}-\theta_{n}\ttheta_n\alpha_n\lambda_n\\
    &\qquad= \lambda_n\ttheta_n\alpha_n\prt{\htheta_{n}+\btheta_n\prt{2-\gamma_n\hbeta}-\theta_n}
\end{align*}
which by \labelcref{itm:prop-theta-i} is identically zero. Now, expanding all the remaining terms, reordering and recollecting them give
\begin{align*}
    &\Delta_n=\\
    &\quad\lambda_n^2\norm{-\balpha_nz_{n-1}+\balpha_np_{n-1}}_M^2 - \tfrac{\ttheta_{n}\alpha_n^2(\lambda_{n}+\mu_{n})}{\htheta_{n}}\norm{y_{n-1}}_{M}^2\\
    &\quad+ \tfrac{\mu_{n}\gamma_{n}\hbeta}{2}\norm{p_{n-1}-y_{n-1}}_M^2 + 2\inpr{p_{n-1}}{-\omega_n(z_{n-1}-p_{n-1})}_M\\
    &\quad+ \tfrac{1}{2}\theta_n\norm{\tfrac{\ptheta_n}{\theta_n}p_{n-1}-\tfrac{2\btheta_n\balpha_n}{\theta_n}z_{n-1}+\tfrac{\ttheta_n\alpha_n}{\theta_n}y_{n-1}}_M^2 - \tfrac{\lambda_{n}\gamma_{n}\alpha_{n}\hbeta}{2}\norm{y_{n-1}-p_{n-1}}_M^2\\
    &\quad- \tfrac{\htheta_{n}(\lambda_{n}+\mu_{n})}{\theta_{n}}\norm{(\balpha_n-\alpha_n)p_{n-1}-\balpha_nz_{n-1}+\tfrac{\alpha_n\btheta_n\gamma_n\hbeta}{\htheta_n}y_{n-1}}_{M}^2\\
    &= \lambda_n^2\balpha_n^2\norm{p_{n-1}}_M^2 + 2\inpr{p_{n-1}}{-\lambda_n^2\balpha_n^2z_{n-1}}_M + \lambda_n^2\balpha_n^2\norm{z_{n-1}}_M^2\\
    &\quad + \tfrac{\mu_{n}\gamma_{n}\hbeta}{2}\norm{p_{n-1}}_M^2 + 2\inpr{p_{n-1}}{-\tfrac{\mu_{n}\gamma_{n}\hbeta}{2}y_{n-1}}_M + \tfrac{\mu_{n}\gamma_{n}\hbeta}{2}\norm{y_{n-1}}_M^2 \\
    &\quad- \tfrac{\ttheta_{n}\alpha_n^2(\lambda_{n}+\mu_{n})}{\htheta_{n}}\norm{y_{n-1}}_{M}^2 + 2\omega_n\norm{p_{n-1}}_M^2 + 2\inpr{p_{n-1}}{-\omega_nz_{n-1}}_M\\
    &\quad+ \tfrac{{\ptheta_n}^2}{2\theta_n}\norm{p_{n-1}}_M^2 + 2\inpr{p_{n-1}}{\tfrac{1}{2}\ptheta_n\prt{-\tfrac{2\btheta_n\balpha_n}{\theta_n}z_{n-1}+\tfrac{\ttheta_n\alpha_n}{\theta_n}y_{n-1}}}_M\\
    &\quad+ \tfrac{2\btheta_n^2\balpha_n^2}{\theta_n}\norm{z_{n-1}}_M^2 + 2\inpr{z_{n-1}}{-\tfrac{\btheta_n\balpha_n\ttheta_n\alpha_n}{\theta_n}y_{n-1}}_M + \tfrac{\ttheta_n^2\alpha_n^2}{2\theta_n}\norm{y_{n-1}}_M^2\\
    &\quad- \tfrac{\lambda_{n}\gamma_{n}\alpha_{n}\hbeta}{2}\norm{p_{n-1}}_M^2 + 2\inpr{p_{n-1}}{\tfrac{\lambda_{n}\gamma_{n}\alpha_{n}\hbeta}{2}y_{n-1}}_M - \tfrac{\lambda_{n}\gamma_{n}\alpha_{n}\hbeta}{2}\norm{y_{n-1}}_M^2\\
    &\quad- \tfrac{\htheta_{n}(\lambda_{n}+\mu_{n})(\balpha_n-\alpha_n)^2}{\theta_{n}}\norm{p_{n-1}}_{M}^2 - \tfrac{\alpha_n^2\btheta_n^2\gamma_n^2\hbeta^2(\lambda_{n}+\mu_{n})}{\htheta_n\theta_n}\norm{y_{n-1}}_{M}^2\\
    &\quad- \tfrac{\htheta_{n}\balpha_n^2(\lambda_{n}+\mu_{n})}{\theta_{n}}\norm{z_{n-1}}_{M}^2 + 2\inpr{z_{n-1}}{\tfrac{\balpha_n\alpha_n\btheta_n\gamma_n\hbeta(\lambda_{n}+\mu_{n})}{\theta_{n}}y_{n-1}}_M\\
    &\quad + 2\inpr{p_{n-1}}{-\tfrac{\htheta_{n}(\lambda_{n}+\mu_{n})(\balpha_n-\alpha_n)}{\theta_{n}}\prt{-\balpha_nz_{n-1}+\tfrac{\alpha_n\btheta_n\gamma_n\hbeta}{\htheta_n}y_{n-1}}}_M\\
    &=\prt{\lambda_n^2\balpha_n^2+\tfrac{\mu_{n}\gamma_{n}\hbeta}{2}+2\omega_n+\tfrac{{\ptheta_n}^2}{2\theta_n}-\tfrac{\lambda_{n}\gamma_{n}\alpha_{n}\hbeta}{2}-\tfrac{\htheta_{n}(\lambda_{n}+\mu_{n})(\balpha_n-\alpha_n)^2}{\theta_{n}}}\norm{p_{n-1}}_M^2\\
    &\quad+2\inpr{p_{n-1}}{\prt{-\lambda_n^2\balpha_n^2-\omega_n-\tfrac{\btheta_n\ptheta_n\balpha_n}{\theta_n}+\tfrac{\htheta_{n}\balpha_n(\lambda_{n}+\mu_{n})(\balpha_n-\alpha_n)}{\theta_{n}}}z_{n-1}}_M\\
    &\quad+ 2\inpr{p_{n-1}}{\prt{-\tfrac{\mu_{n}\gamma_{n}\hbeta}{2}+\tfrac{\ptheta_n\ttheta_n\alpha_n}{2\theta_n}+\tfrac{\lambda_{n}\gamma_{n}\alpha_{n}\hbeta}{2}-\tfrac{\alpha_n\btheta_n\gamma_n\hbeta(\lambda_{n}+\mu_{n})(\balpha_n-\alpha_n)}{\theta_{n}}}y_{n-1}}_M\\
    &\quad+ \prt{\lambda_n^2\balpha_n^2+\tfrac{2\btheta_n^2\balpha_n^2}{\theta_n}-\tfrac{\htheta_{n}\balpha_n^2(\lambda_{n}+\mu_{n})}{\theta_{n}}}\norm{z_{n-1}}_M^2\\
    &\quad+ 2\inpr{z_{n-1}}{\prt{-\tfrac{\btheta_n\balpha_n\ttheta_n\alpha_n}{\theta_n}+\tfrac{\balpha_n\alpha_n\btheta_n\gamma_n\hbeta(\lambda_{n}+\mu_{n})}{\theta_{n}}}y_{n-1}}_M\\
    &\quad+\prt{\tfrac{\mu_{n}\gamma_{n}\hbeta}{2}-\tfrac{\ttheta_{n}\alpha_n^2(\lambda_{n}+\mu_{n})}{\htheta_{n}}+\tfrac{\ttheta_n^2\alpha_n^2}{2\theta_n}-\tfrac{\lambda_{n}\gamma_{n}\alpha_{n}\hbeta}{2}-\tfrac{\alpha_n^2\btheta_n^2\gamma_n^2\hbeta^2(\lambda_{n}+\mu_{n})}{\htheta_n\theta_n}}\norm{y_{n-1}}_M^2
\end{align*}
We show that all the coefficients in the expression above are identically zero. Starting by the coefficient of $\norm{p_{n-1}}_M^2$, we have
\begin{align*}
    &2\theta_{n}\lambda_n^2\balpha_n^2+\theta_{n}\mu_{n}\gamma_{n}\hbeta+4\theta_{n}\omega_n+{\ptheta_n}^2-\theta_{n}\lambda_{n}\gamma_{n}\alpha_{n}\hbeta-2\htheta_{n}(\lambda_{n}+\mu_{n})(\balpha_n-\alpha_n)^2\\
    &\quad= 2\theta_{n}\lambda_n^2\balpha_n^2+\theta_{n}\mu_{n}\gamma_{n}\hbeta-4\theta_{n}\balpha_n\mu_n+\prt{\prt{2-\gamma_n\hbeta}\mu_n+2\balpha_n\btheta_n}^2\\
    &\qquad-\theta_{n}\lambda_{n}\gamma_{n}\alpha_{n}\hbeta-2\htheta_{n}(\lambda_{n}+\mu_{n})(\balpha_n-\alpha_n)^2\\
    &\quad= 2\balpha_n^2\prt{\theta_{n}\lambda_n^2+2\btheta_n^2-\htheta_n(\lambda_n+\mu_n)}+ 4\mu_n\balpha_n\prt{-\theta_n+\btheta_n\prt{2-\gamma_n\hbeta}+\htheta_n}\\
    &\qquad+ \theta_{n}\mu_{n}\gamma_{n}\hbeta+\prt{2-\gamma_n\hbeta}^2\mu_n^2-\theta_{n}\lambda_{n}\gamma_{n}\alpha_{n}\hbeta-2\htheta_{n}(\lambda_{n}+\mu_{n})\alpha_n^2\\
    &\quad= \theta_{n}\mu_{n}\gamma_{n}\hbeta+\prt{2-\gamma_n\hbeta}^2\mu_n^2-\theta_{n}\lambda_{n}\gamma_{n}\alpha_{n}\hbeta-2\htheta_{n}\mu_{n}\alpha_n\\
    &\quad= \theta_{n}\gamma_{n}\hbeta\prt{\mu_{n}-\lambda_n\alpha_n}+\prt{2-\gamma_n\hbeta}^2\mu_n^2-2\htheta_{n}\mu_{n}\alpha_n\\
    &\quad= \theta_{n}\gamma_{n}\hbeta\mu_{n}\alpha_n+\prt{2-\gamma_n\hbeta}^2(\lambda_n+\mu_n)\alpha_n\mu_n-2\htheta_{n}\mu_{n}\alpha_n\\
    &\quad= \mu_{n}\alpha_n\prt{\theta_{n}\gamma_{n}\hbeta+\prt{2-\gamma_n\hbeta}^2(\lambda_n+\mu_n)-2\htheta_{n}}
\end{align*}
where in the first equality $\omega_n=-\balpha_n\mu_n$ is used and $\ptheta_n$ is substituted from \labelcref{itm:def-param-theta-prime}, the third equality is attained from \labelcref{itm:prop-theta-i} and \labelcref{itm:prop-theta-iii}, and the expression to the right-hand side of the last equality is identically zero by \eqref{eq:xn-squared-coefficient}. For the coefficient of $\inpr{p_{n-1}}{z_{n-1}}_M$ we have
\begin{align*}
    &-\lambda_n^2\balpha_n^2\theta_{n}-\omega_n\theta_{n}-\btheta_n\ptheta_n\balpha_n+\htheta_{n}\balpha_n(\lambda_{n}+\mu_{n})(\balpha_n-\alpha_n)\\
    &\quad= -\lambda_n^2\balpha_n^2\theta_{n}-\omega_n\theta_{n}-\btheta_n\balpha_n\prt{\prt{2-\gamma_n\hbeta}\mu_n+2\balpha_n\btheta_n}\\
    &\qquad+\htheta_{n}\balpha_n(\lambda_{n}+\mu_{n})(\balpha_n-\alpha_n)\\
    &\quad= \balpha_n^2\prt{-\lambda_n^2\theta_{n}-2\btheta_n^2+\htheta_n(\lambda_n+\mu_n)}-\omega_n\theta_{n}-\btheta_n\balpha_n\mu_n\prt{2-\gamma_n\hbeta}\\
    &\qquad-\htheta_{n}\balpha_n(\lambda_{n}+\mu_{n})\alpha_n\\
    &\quad= \balpha_n\mu_n\theta_{n}-\btheta_n\balpha_n\mu_n\prt{2-\gamma_n\hbeta}-\htheta_{n}\balpha_n\mu_{n}\\
    &\quad= \balpha_n\mu_n\prt{\theta_{n}-\btheta_n\prt{2-\gamma_n\hbeta}-\htheta_{n}}
\end{align*}
which by \labelcref{itm:prop-theta-i} is equal to zero. The third equality above is attained by using \labelcref{itm:prop-theta-iii}. For the coefficient of $\inpr{p_{n-1}}{y_{n-1}}_M$ we have
\begin{align*}
    &\ptheta_n\ttheta_n\alpha_n-\mu_{n}\gamma_{n}\hbeta\theta_{n}+\lambda_{n}\gamma_{n}\alpha_{n}\hbeta\theta_{n}-2\alpha_n\btheta_n\gamma_n\hbeta(\lambda_{n}+\mu_{n})(\balpha_n-\alpha_n)\\
    &\quad= \prt{\prt{2-\gamma_n\hbeta}\mu_n+2\balpha_n\btheta_n}\ttheta_n\alpha_n-\mu_{n}\gamma_{n}\hbeta\theta_{n}+\lambda_{n}\gamma_{n}\alpha_{n}\hbeta\theta_{n}\\
    &\qquad -2\alpha_n\btheta_n\ttheta_n(\balpha_n-\alpha_n)\\
    &\quad= \prt{2-\gamma_n\hbeta}\mu_n\ttheta_n\alpha_n-\mu_{n}\gamma_{n}\hbeta\theta_{n}+\lambda_{n}\gamma_{n}\hbeta\theta_{n}\alpha_{n}+2\alpha_n\btheta_n\ttheta_n\alpha_n\\
    &\quad= \prt{2-\gamma_n\hbeta}\mu_n(\lambda_n+\mu_n)\gamma_n\hbeta\alpha_n-(\lambda_n+\mu_n)\alpha_n\gamma_{n}\hbeta\theta_{n}\\
    &\qquad+\lambda_{n}\gamma_{n}\hbeta\theta_{n}\alpha_{n}+2\alpha_n\btheta_n\mu_n\gamma_n\hbeta\\
    &\quad= \prt{2-\gamma_n\hbeta}\mu_n(\lambda_n+\mu_n)\gamma_n\hbeta\alpha_n-\mu_n\alpha_n\gamma_{n}\hbeta\theta_{n}+2\alpha_n\btheta_n\mu_n\gamma_n\hbeta\\
    &\quad= \mu_n\gamma_n\hbeta\alpha_n\prt{\prt{2-\gamma_n\hbeta}(\lambda_n+\mu_n)-\theta_{n}+2\btheta_n}
\end{align*}
which by \labelcref{itm:prop-theta-ii} is identical to zero. For the coefficient of $\norm{z_{n-1}}_M^2$, it is straightforward to see its equivalence to zero by \labelcref{itm:prop-theta-iii}. Likewise, the coefficient of $\inpr{z_{n-1}}{y_{n-1}}_M$ is identically zero by definition of $\ttheta_n$. The coefficient of $\norm{y_{n-1}}_M^2$ is
\begin{align*}
    &\mu_{n}\gamma_{n}\hbeta\htheta_n\theta_n-2\theta_n\ttheta_{n}\alpha_n^2(\lambda_{n}+\mu_{n})+\htheta_n\ttheta_n^2\alpha_n^2-\lambda_{n}\gamma_{n}\alpha_{n}\hbeta\htheta_n\theta_n\\
    &\quad\qquad-2\alpha_n^2\btheta_n^2\gamma_n^2\hbeta^2(\lambda_{n}+\mu_{n})\\
    &\quad= (\lambda_n+\mu_n)\alpha_n\gamma_{n}\hbeta\htheta_n\theta_n-2\theta_n\ttheta_{n}\alpha_n^2(\lambda_{n}+\mu_{n})+\htheta_n\ttheta_n^2\alpha_n^2\\
    &\quad\qquad-\lambda_{n}\gamma_{n}\alpha_{n}\hbeta\htheta_n\theta_n-2\alpha_n^2\btheta_n^2\gamma_n^2\hbeta^2(\lambda_{n}+\mu_{n})\\
    &\quad= \mu_n\alpha_n\gamma_{n}\hbeta\htheta_n\theta_n-2\theta_n\ttheta_{n}\alpha_n^2(\lambda_{n}+\mu_{n})+\htheta_n\ttheta_n^2\alpha_n^2-2\alpha_n^2\btheta_n^2\gamma_n^2\hbeta^2(\lambda_{n}+\mu_{n})\\
    &\quad= \mu_n\alpha_n\gamma_{n}\hbeta\htheta_n\theta_n-2\theta_n\gamma_n\hbeta\mu_n\alpha_n(\lambda_{n}+\mu_{n})+\htheta_n\ttheta_n\mu_n\gamma_n\hbeta\alpha_n-2\alpha_n\mu_n\btheta_n^2\gamma_n^2\hbeta^2\\
    &\quad= \mu_n\alpha_n\gamma_{n}\hbeta\prt{\htheta_n\theta_n-2\theta_n(\lambda_{n}+\mu_{n})+\htheta_n\ttheta_n-2\btheta_n^2\gamma_n\hbeta}\\
    &\quad= \mu_n\alpha_n\gamma_{n}\hbeta\prt{\theta_n\prt{\htheta_n-2\lambda_{n}-2\mu_{n}}+\htheta_n\gamma_n\hbeta(\lambda_n+\mu_n)-2\btheta_n^2\gamma_n\hbeta}\\
    &\quad= \mu_n\alpha_n\gamma_{n}\hbeta\prt{-\theta_n\lambda_n^2\gamma_n\hbeta+\htheta_n\gamma_n\hbeta(\lambda_n+\mu_n)-2\btheta_n^2\gamma_n\hbeta}\\
    &\quad= \mu_n\alpha_n\gamma_{n}^2\hbeta^2\prt{-\theta_n\lambda_n^2+\htheta_n(\lambda_n+\mu_n)-2\btheta_n^2}
\end{align*}
which by \labelcref{itm:prop-theta-iii} is equivalent to zero. This concludes the proof.
\end{proof}

\section{Conclusions}
\label{sec:conclusions}

We have presented a variant of the well-known forward--backward algorithm. Our method incorporates momentum-like terms in the algorithm updates as well as deviation vectors. These deviation vectors can be chosen arbitrarily as long as a safeguarding condition that limits their size is satisfied. We propose special instances of our method that fulfill the safeguarding condition by design. Numerical evaluations reveal that these novel methods can significantly outperform the traditional forward--backward method as well as the accelerated proximal point method and the Halpern iteration, all of which are encompassed within our framework. This demonstrates the potential of our proposed methods for efficiently solving structured monotone inclusions.

\printbibliography

\end{document}